\documentclass{article}

     \PassOptionsToPackage{numbers, compress}{natbib}

 \usepackage[preprint]{neurips_2026}

\usepackage[utf8]{inputenc} 
\usepackage[T1]{fontenc}    
\usepackage{hyperref}       
\usepackage{url}            
\usepackage{booktabs}       
\usepackage{amsfonts}       
\usepackage{nicefrac}       
\usepackage{microtype}      
\usepackage{xcolor}         
\usepackage{graphicx}       
\usepackage{subcaption}     
\usepackage{amsmath}
\usepackage{amsthm}
\usepackage{algorithm,algorithmic}
\usepackage{amssymb}
\newcommand{\dd}{\,{\rm d}}

\theoremstyle{plain}
\newtheorem{theorem}{Theorem}

\newtheorem{lemma}{Lemma}

\newtheorem{proposition}{Proposition}
\newtheorem{proposition*}{Proposition}
\newtheorem*{theorem*}{Theorem}
\newtheorem*{lemma*}{Lemma}

\theoremstyle{remark}
\newtheorem*{remark}{Remark}

\newcommand{\dual}[1]{\left\langle {#1} \right\rangle}

\usepackage{comment}

\title{Accelerating Sinkhorn for Entropy-Regularized Optimal Transport}

\author{%
  Zeyi Xu \\
  Department of Mathematics\\
  University of California, Irvine\\
  Irvine, CA 92697 \\
  \texttt{zeyix1@uci.edu} \\
 \And
 Long Chen \\
 Department of Mathematics\\
  University of California, Irvine\\
  Irvine, CA 92697 \\
  \texttt{chenlong@math.uci.edu} \\
}

\begin{document}

\maketitle

\begin{abstract}
We propose Acc-Sinkhorn, a simple accelerated variant of Sinkhorn for entropy-regularized optimal transport (EOT). The method is derived from a bilevel optimization view: Sinkhorn row scaling solves the inner variable $u$ exactly and defines the reduced dual objective $f(v)=\min_u F(u,v)$, while the remaining column scaling is a unit-step dual mirror descent step in $v$. This structure yields a Hessian-driven Nesterov acceleration that keeps Sinkhorn's scaling form and per-iteration cost, using only extrapolated combinations of Sinkhorn iterates. We prove an $\mathcal{O}(1/k^2)$ rate under a verifiable stability condition. For an $\varepsilon$-approximation of unregularized OT, the resulting complexity is $\widetilde{\mathcal{O}}(n^2/\varepsilon)$, improved from $\widetilde{\mathcal{O}}(n^2/\varepsilon^2)$ for Sinkhorn. On synthetic problems, color transfer, and word alignment, Acc-Sinkhorn gives a $10\times$--$30\times$ speedup over Sinkhorn at small regularization.
\end{abstract}

\section{Introduction}

\paragraph{Entropic optimal transport.}
Given two discrete distributions $a \in \mathbb{R}^n_+$ and $b \in \mathbb{R}^m_+$, the \emph{entropy-regularized optimal transport} (EOT) problem is
\begin{equation}\label{eq:EOT}
\min_{P \in \Pi(a,b)} \langle C, P \rangle + \varepsilon \sum_{i,j} P_{ij}(\log P_{ij}-1),
\end{equation}
where $C \in \mathbb{R}^{n \times m}_+$ is a cost matrix, $\varepsilon>0$ controls the strength of the entropic regularization, and
$$
\Pi(a,b):=\Big \{P\in\mathbb{R}^{n\times m}_+:\; \sum_{j} P_{ij}=a_i,\; i=1,\ldots,n,\quad \sum_{i} P_{ij}=b_j,\; j=1,\ldots,m\Big \}
$$
is the transportation polytope. 
Since its introduction to machine learning by \citet{cuturi2013sinkhorn}, EOT has become a standard tool in generative modeling, domain adaptation, image processing, computer vision, and natural language processing~\citep{genevay2018learning,courty2017joint,solomon2015convolutional,xu2019learning,alvarez2020unsupervised}.

The original OT problem
\begin{equation}\label{eq:OT}
\min_{P \in \Pi(a,b)} \langle C, P \rangle
\end{equation}
is a linear program and can be solved by standard LP methods, including simplex and interior-point methods~\citep{peyre2019computational}. These methods scale poorly for large problems: $P$ has $nm$ entries, interior-point methods require large KKT solves, and simplex-type methods may need many pivots on degenerate transport polytopes. In contrast, EOT has a unique positive minimizer and admits simple $\mathcal O(nm)$-per-iteration algorithms that exploit the transport structure and are easy to parallelize on GPUs.

\paragraph{Sinkhorn algorithm.}
The Sinkhorn algorithm \citep{sinkhorn1964} alternately rescales the rows and columns of the kernel matrix $K=\exp(-C/\varepsilon)$ to enforce the marginal constraints. Each iteration costs $\mathcal O(nm)$ and is easy to parallelize. Its convergence has been studied through Hilbert's projective metric, block coordinate descent, sharp $\varepsilon$-dependent estimates, and mirror descent in continuous and discrete time~\citep{franklin1989scaling,carlier2022linear,ghosal2025convergence,chizat2026sharper,pmlr-v238-reza-karimi24a,mishchenko2019sinkhorn,leger2021gradient,aubin2022mirror,pmlr-v235-chopin24a}.

Despite these advances, a central question remains: \emph{can Sinkhorn be accelerated to improve its accuracy dependence without losing its simplicity?}

\paragraph{Related Work.}
We call a method accelerated if its error decays as $\mathcal{O}(1/k^2)$ rather than the $\mathcal{O}(1/k)$ rate of Sinkhorn, where $k$ is the number of full Sinkhorn-type iterations. For simplicity, we discuss complexity for $m=n$. For an $\varepsilon$-approximation of the original OT problem, the total cost is often $\mathcal{O}(n^p/\varepsilon^q)$ for positive $p$ and $q$. Existing acceleration methods fall into three broad categories.

\smallskip
\noindent\textbf{(i) Coordinate-wise and stochastic methods.}
Coordinate-wise and stochastic variants, such as Greenkhorn, reduce the cost of each iteration by updating only part of the variables or by using stochastic estimates~\citep{altschuler2017near,lin2019efficient,genevay2016stochastic}. These methods can improve practical efficiency, but they retain the $\mathcal{O}(1/k)$ convergence behavior of Sinkhorn and thus do not improve its dependence on $\varepsilon$.

\smallskip
\noindent\textbf{(ii) Nesterov acceleration and primal-dual methods.}
Accelerated primal-dual methods, such as APDAGD, improve the $\varepsilon$-dependence of Sinkhorn by applying Nesterov-type acceleration or accelerated mirror descent to the EOT dual problem~\citep{dvurechensky2018computational,lin2019efficient}. However, they typically require line search or adaptation to local smoothness and are more complex to implement than Sinkhorn.

\smallskip
\noindent\textbf{(iii) Sinkhorn variants with modified update rules.}
Sinkhorn variants with modified update rules, such as overrelaxed Sinkhorn and annealed Sinkhorn, keep the scaling structure of Sinkhorn while changing the update or regularization schedule~\citep{thibault2021overrelaxed,chizat2024annealed}. They can improve local convergence or practical behavior, but they do not give a global $\mathcal{O}(1/k^2)$ acceleration with a parameter-free update.

\smallskip
In short, existing methods do not simultaneously achieve an accelerated $\mathcal{O}(1/k^2)$ rate, $\mathcal O(n^2/\varepsilon)$ complexity, and a parameter-free simple implementation; see Table~\ref{tab:comparison}.

\begin{table}[htp]
\centering
\caption{Comparison of algorithms for entropy-regularized optimal transport ($n=m$).}
\label{tab:comparison}
\renewcommand{\arraystretch}{1.3}
\begin{tabular}{lccc}
\toprule
\textbf{Method}
  & \textbf{Rate}
  & \textbf{Total complexity}
  & \textbf{Parameters $L,\mu$} \\
\midrule
Sinkhorn \citep{cuturi2013sinkhorn,altschuler2017near}
  & $\mathcal{O}(1/k)$
  & $\widetilde{\mathcal O}(n^2/\varepsilon^2)$
  & free \\

Greenkhorn \citep{altschuler2017near,lin2019efficient}
  & $\mathcal{O}(1/k)$
  & $\widetilde{\mathcal O}(n^2/\varepsilon^2)$
  & free \\

Overrelaxed Sinkhorn \citep{thibault2021overrelaxed}
  & $O(\rho^k)$ (local)
  & $\widetilde{\mathcal O}(n^2/\varepsilon)$ (local)
  & spectral parameter \\

APDAGD \citep{dvurechensky2018computational,lin2019efficient}
  & $\mathcal{O}(1/k^2)$
  & $\widetilde{\mathcal O}(n^{5/2}/\varepsilon)$
  & linesearch for $L$ \\

APDAMD \citep{lin2019efficient}
  & $\mathcal{O}(1/k^2)$
  & $\widetilde{\mathcal O}(n^2\sqrt{\delta}/\varepsilon)$
  & linesearch for $L$ \\

Annealed Sinkhorn \citep{chizat2024annealed}
  & $\mathcal{O}(1/\sqrt{k})$
  & heuristic
  & schedule \\

\midrule
\textbf{Acc-Sinkhorn (ours)}
  & $\mathcal{O}(1/k^2)$
  & $\widetilde{\mathcal O}(n^2/\varepsilon)$
  & $L=1$; homotopy $\mu$ \\
\bottomrule
\end{tabular}
\end{table}

\paragraph{Our contribution.}
The main contribution of this paper is \textbf{Acc-Sinkhorn}, a simple accelerated variant of Sinkhorn that keeps the $\mathcal O(nm)$ per-iteration cost of Sinkhorn while improving the convergence rate from $\mathcal{O}(1/k)$ to $\mathcal{O}(1/k^2)$. As shown in Table~\ref{tab:comparison}, Acc-Sinkhorn is the only method in the table that combines accelerated convergence, Sinkhorn-level per-iteration cost, and a simple update rule without line search. Its smoothness parameter is fixed by the geometry, namely $L=1$, and the strong-convexity parameter $\mu$ is handled by a homotopy schedule rather than by manual tuning. Numerically, Acc-Sinkhorn achieves a $10\times$--$30\times$ speedup over Sinkhorn on synthetic datasets, color transfer, and word alignment at small $\varepsilon$.

The acceleration is built on a bilevel view of Sinkhorn. Let $F(u,v)$ be the dual objective of \eqref{eq:EOT}, where $(u,v)$ are the Lagrange multipliers for the marginal constraints. For each fixed $v$, the inner problem
$$
u(v)=\arg\min_u F(u,v)
$$
is exactly the row-scaling step of Sinkhorn. This defines the reduced outer objective
$$
f(v)=F(u(v),v)=\min_u F(u,v),
$$
and Sinkhorn can be viewed as an exact inner solve in $u$ followed by an outer descent step in $v$.

With the mirror function $\phi(v)=\sum_j b_j(e^{v_j}-v_j)$, the outer Sinkhorn step can be written as the unit-step dual mirror descent update
\begin{equation}
v_{k+1}=v_k-\nabla\phi^*(\nabla f(v_k)).
\end{equation}
Moreover, the dual relative smoothness constant is $L=1$:
$$
\nabla^2 \phi^*(\nabla f(v)) \preceq \nabla^2 f^*(\nabla f(v)).
$$

Using this bilevel structure, Acc-Sinkhorn (Algorithm~\ref{alg:A2MD}) applies a Hessian-driven Nesterov acceleration gradient flow~\citep{chen2019orderoptimizationmethodsbased} with nonlinear preconditioning to the reduced outer problem. The method keeps two sequences $(x_k,y_k)$ and uses essentially one Sinkhorn step per iteration; the extra cost is only a few vector operations. A homotopy outer loop (Algorithm~\ref{alg:A2MD-homotopy}) decreases the unknown strong-convexity parameter $\mu$ by a predefined schedule and gives an accelerated $\mathcal{O}(1/k^2)$ rate. Acc-Sinkhorn needs $\mathcal{O}(\tau^{-1/2})$ Sinkhorn steps to reach accuracy $\tau$, whereas Sinkhorn needs $\mathcal{O}(\tau^{-1})$ steps.

For the unregularized OT problem, this inner complexity is combined with the standard entropic approximation error $\mathcal O(\varepsilon\log n)$. Balancing optimization and regularization errors by setting $\tau=\varepsilon^2$ gives an overall complexity of order $\mathcal O(n^2/\varepsilon)$ for an $\varepsilon$-approximation of the OT cost.

%
%
\paragraph{Limitations.}
In our experiments, Acc-Sinkhorn uses the step size $\alpha=\sqrt{2\mu}$ and shows stable accelerated behavior. The current proof is more conservative: it gives the full accelerated guarantee under a stability condition, which holds for sufficiently small $\alpha$ and can be enforced by linesearch. The gap comes from controlling the metric-variation term $\|y_k-x^\star\|_{\mathcal D_{k+1}-\mathcal D_k}^2$, which is small in practice but hard to bound uniformly. Removing this condition and justifying the step size $\alpha=\sqrt{2\mu}$ remain future work.


\section{Preliminaries}
\label{sec:prelim}

\paragraph{Notations.}
For two vectors $x,y\in\mathbb{R}^n$, we write $x./y$ and $x.*y$ for entrywise division and multiplication, and $\exp(x)$ and $\log(x)$ for the entrywise exponential and logarithm. For a vector $x\in\mathbb{R}^n$, $\mathrm{diag}(x)\in\mathbb{R}^{n\times n}$ denotes the diagonal matrix with diagonal $x$. For a positive definite matrix $A$, we write $\|x\|_A:=(x^\top A x)^{1/2}$ for the $A$-weighted norm.

We write $\mathbf{1}_n\in\mathbb{R}^n$ for the all-ones vector, dropping the subscript when the dimension is clear from context. Let $N=\mathrm{span}\{\mathbf{1}_n\}$, and $N^\perp$ be its orthogonal complement, then $N^\perp=\{x\in\mathbb{R}^n:\langle x,\mathbf{1}_n\rangle=0\}$. Let $P_N=\mathbf{1}_n\mathbf{1}_n^\top/n$ be the orthogonal projection onto $N$, and $P_{N^\perp}=I-P_N$ be the projection onto $N^\perp$. 


For a convex function $\phi$, we write $\phi^*$ for its Fenchel conjugate, $\nabla\phi$ for its gradient, $\nabla^2\phi$ for its Hessian when exists, and the induced Bregman divergence
$$
D_\phi(x,y):=\phi(x)-\phi(y)-\langle \nabla\phi(y),x-y\rangle \ge 0.
$$
When $\phi$ is of Legendre type, the maps $\nabla\phi$ and $\nabla\phi^*$ are inverses of each other. In particular,
$$
\nabla\phi^*(\nabla\phi(x))=x,
\qquad
\nabla\phi(\nabla\phi^*(\xi))=\xi.
$$
Therefore, whenever $\nabla f(x)$ lies in the domain of $\nabla\phi^*$, we  have
\begin{equation}\label{eq:dphiphistar}
\nabla\phi(\nabla\phi^*(\nabla f(x)))=\nabla f(x).
\end{equation}


\paragraph{Dual Formulation.}
First, note that the minimizer of \eqref{eq:EOT} is unchanged if we divide the cost matrix $C$ by $\varepsilon$. We therefore assume $\varepsilon=1$ in \eqref{eq:EOT} for simplicity and numerical stability. The complexity bounds, however, depend on this rescaling and will be discussed later.

Introducing Lagrange multipliers $u\in\mathbb{R}^n$ and $v\in\mathbb{R}^m$ for the row and column marginal constraints, we write the Lagrangian as
$$
\mathcal{L}(P,u,v)=\langle C,P\rangle+\sum_{i,j}P_{ij}(\log P_{ij}-1)+\langle u,a-P\mathbf{1}_m\rangle+\langle v,b-P^\top\mathbf{1}_n\rangle.
$$
Solving the stationarity condition $\partial_P \mathcal{L}(P,u,v)=0$ gives
\begin{equation}\label{eq:P-from-dual}
P_{ij}(u,v)=\exp\Bigl(u_i+v_j-C_{ij}\Bigr).
\end{equation}
Substituting \eqref{eq:P-from-dual} into $\mathcal{L}$ yields the dual objective
\begin{equation}\label{eq:dual-obj}
F(u,v):=- \mathcal L (P(u,v), u, v) = \sum_{i,j}\exp(u_i+v_j-C_{ij})-\langle u,a\rangle-\langle v,b\rangle,
\end{equation}
so EOT \eqref{eq:EOT} is equivalent to the unconstrained dual minimization problem
\begin{equation}\label{eq:dual-min}
\min_{u\in\mathbb{R}^n,\,v\in\mathbb{R}^m} F(u,v).
\end{equation}

By the chain rule and the stationarity condition $\partial_P\mathcal{L}(P(u,v),u,v)=0$, the dependence of $P(u,v)$ drops out when differentiating the reduced Lagrangian $\mathcal L(P(u,v),u,v)$. Thus
\begin{equation}\label{eq:grad-F}
\begin{aligned}
\partial_u F(u,v)=r_P(u,v)-a, \quad
\partial_v F(u,v)=c_P(u,v)-b,
\end{aligned}
\end{equation}
where $r_P(u,v)$ and $c_P(u,v)$ are row-sum and column-sum vectors of $P$. Hence $\nabla F(u,v)=0$ is exactly the marginal-matching condition.

Since $\mathbf{1}^\top a=\mathbf{1}^\top b=1$, for any $c\in\mathbb{R}$ we have
\begin{equation}\label{eq:Fc}
F(u+c\mathbf{1}_n,v-c\mathbf{1}_m)=F(u,v)+c\,\mathbf{1}^\top a-c\,\mathbf{1}^\top b=F(u,v).
\end{equation}
This is the only ambiguity. Hence, after fixing a gauge, for example by imposing $\langle u,\mathbf{1}_n\rangle=0, \langle v,\mathbf{1}_m\rangle=0$, the minimizer of \eqref{eq:dual-min} is unique.

\paragraph{Sinkhorn as Exact Coordinate Minimization}

A key property of $F$ is that it is separable in each block. For fixed $v$, the function $F(\cdot,v)$ is strictly convex in $u$, and its minimizer is available in closed form. Indeed, fixing $v$ and setting $\partial_u F(u,v)=0$ gives
$$
r_P(u,v)=a
\;\iff\;
\exp(u_i)\sum_j \exp(v_j-C_{ij})=a_i
\;\iff\;
u_i=\log a_i-\log\!\bigl(K\exp(v)\bigr)_i,
$$
where $K:=\exp(-C)$, and $K\exp(v)$ is a matrix-vector product. Rewriting this minimizer in iterative form gives
\begin{equation}\label{eq:u-update}
u^{k+1}=u^k+\log(a\,./\,r_P(u^k,v^k)).
\end{equation}
The same argument, applied to the $v$-block, gives the exact minimizer of $F(u,\cdot)$ for fixed $u$:
\begin{equation}\label{eq:v-update}
v^{k+1}=v^k+\log(b\,./\,c_P(u^{k+1},v^k)).
\end{equation}

Equations \eqref{eq:u-update}--\eqref{eq:v-update} are exactly the Sinkhorn algorithm: alternating exact minimization of the dual objective $F(u,v)$ over the two blocks $u$ and $v$.

\section{Sinkhorn as Dual Mirror Descent}
\label{sec:sinkhorn-dmd}
In this section, we interpret Sinkhorn as dual mirror descent in the variable $v$, which provides the basis for the acceleration scheme in Section~4. This geometric view also gives sublinear and linear convergence of Sinkhorn; the details are deferred to Appendix~\ref{app:proof-Sinkhorn} as our focus is acceleration.

We first rewrite the dual problem from a bilevel point of view. The variable $u$ is the inner variable and $v$ is the outer variable. For each fixed $v$, the inner problem
$$
u(v)=\arg\min_u F(u,v)
$$
has a closed-form solution, and its optimality condition $\partial_u F\bigl(u(v),v\bigr)=0$ is exactly the $u$-update \eqref{eq:u-update}. This defines the reduced outer objective
\begin{equation}
f(v):=F\bigl(u(v),v\bigr)=\min_u F(u,v).
\end{equation}
Thus Sinkhorn solves the inner problem in $u$ exactly and then updates the outer variable $v$ by a gradient-type iteration. Since $\partial_u F\bigl(u(v),v\bigr)=0$, the chain rule gives
\begin{equation}\label{eq:grad-f}
\nabla f(v)
=
\partial_u F\bigl(u(v),v\bigr)\,\partial_v u(v)
+\nabla_v F\bigl(u(v),v\bigr)
=
\nabla_v F\bigl(u(v),v\bigr)
=
c_P\bigl(u(v),v\bigr)-b.
\end{equation}
Hence $\nabla f(v)$ is the column marginal residual after the row marginal has been matched exactly.

We now show that the outer Sinkhorn update is a \emph{dual mirror descent} step on $f$. The Hessian of $f$ has the Schur-complement form
\begin{equation}\label{eq:HessianL}
\nabla^2 f(v)=\mathrm{diag}(c_P)-P^\top \mathrm{diag}(r_P)^{-1}P\preceq \mathrm{diag}(c_P).
\end{equation}
Let $\xi=\nabla f(v)=c_P-b$. We define the mirror geometry by
$$
\nabla^2\phi^*(\xi):=\mathrm{diag}(b+\xi)^{-1}.
$$
Since $b+\xi=c_P$, \eqref{eq:HessianL} gives
$$
\nabla^2\phi^*(\nabla f(v))=\mathrm{diag}(c_P)^{-1}\preceq \nabla^2 f(v)^{-1}=\nabla^2 f^*(\nabla f(v)).
$$
Thus the dual relative smoothness inequality holds with constant $L=1$ \citep{maddison2021dual}; equivalently,
$$
D_{\phi^*}(\xi,\eta)\le D_{f^*}(\xi,\eta)
$$
for all admissible $\xi,\eta$. Integrating $\nabla^2\phi^*$ with $\nabla\phi^*(0)=0$ gives
$$
\nabla\phi^*(\xi)=\log(\mathbf{1}+\xi./b),
\qquad
\phi(v)=\sum_{j=1}^m b_j(e^{v_j}-v_j).
$$
Therefore, unit-step dual mirror descent gives
\begin{equation}\label{eq:md-general}
v^{k+1}=v^k-\nabla\phi^*(\nabla f(v^k))
=v^k+\log \Bigl(b./c_P(u^{k+1},v^k)\Bigr),
\end{equation}
which is exactly the Sinkhorn $v$-update \eqref{eq:v-update}.
This immediately implies the standard $\mathcal{O}(1/k)$ sublinear convergence of Sinkhorn \cite{maddison2021dual}. 

Linear convergence is more subtle. The shift invariance of $f$ follows from the corresponding invariance of the full dual objective \eqref{eq:Fc}:
$$
f(v+c\mathbf{1})
=
\min_u F(u,v+c\mathbf{1})
=
\min_u F(u+c\mathbf{1},v)
=
f(v).
$$
Hence $f$ is not strongly convex. We remove this degeneracy by the normalized Sinkhorn iteration
\begin{equation}\label{eq:normalized-sinkhorn}
v_0\in N^\perp,\qquad
v^{k+1} = \text{Sinkhorn}(v^k):=
v^k-P_{N^\perp}\nabla\phi^*\!\bigl(P_{N^\perp}\nabla f(v^k)\bigr).
\end{equation}
It differs from the plain Sinkhorn iteration only by an additive multiple of $\mathbf{1}$, and therefore gives the same primal update $P^k$. The normalized iterates are uniformly bounded \citep{carlier2022linear}.  After the normalization, one proves a Polyak--{\L}ojasiewicz inequality and obtain linear convergence; see Appendix~\ref{app:proof-Sinkhorn}.

\section{Accelerating Sinkhorn}
\label{sec:accelerated}
Motivated by Hessian-driven Nesterov accelerated gradient (HNAG) \citep{chen2019orderoptimizationmethodsbased}, we propose the following accelerated dual mirror descent scheme for minimizing $f$:
\begin{equation}\label{eq:scheme}
\begin{aligned}
\frac{x_{k+1}-x_k}{\alpha} &= y_k-x_{k+1}-\frac{1}{\alpha} \nabla \phi^*(\nabla f(x_k)),\\
\frac{y_{k+1}-y_k}{\alpha} &= x_{k+1}-y_{k+1}-\frac{1}{\mu}\nabla\phi^*(\nabla f(x_{k+1})).
\end{aligned}
\end{equation}
Here $x_k$ is the main iterate, corresponding to $v^k$ in Sinkhorn, $y_k$ is an auxiliary iterate.

Following \citet{chen2025hnagsuperfastacceleratedgradient}, we set $\alpha=\sqrt{2\mu}$ and introduce $w_k:=\alpha y_k$. This gives the equivalent simplified updates in Algorithm~\ref{alg:A2MD}. We also give a practical homotopy version in Algorithm~\ref{alg:A2MD-homotopy}, where $\mu$ is decreased to $0$ by a prescribed schedule.


\begin{algorithm}[htbp]
\caption{Accelerated Dual Mirror Descent for Sinkhorn (Acc-Sinkhorn)}
\label{alg:A2MD}
\begin{algorithmic}[1]
\STATE \textbf{Input:} $x_0\in N^\perp$, $w_0\in N^\perp$, $0<\mu<1$, and $m\geq 1$
\STATE \textbf{Set} $\alpha=\sqrt{2\mu}$
\FOR{$k=0,1,\dots,m-1$}
    \STATE $x_{k+1}=\frac{1}{1+\alpha}\left(w_k+\text{Sinkhorn}(x_k)\right)$
    \STATE $w_{k+1}=\frac{1}{1+\alpha}\left(w_k+(\alpha^2-2)x_{k+1}+2\,\text{Sinkhorn}(x_{k+1})\right)$
\ENDFOR
\STATE \textbf{Output:} $(x_m,w_m)$
\end{algorithmic}
\end{algorithm}

The Sinkhorn step computed in the update of $w_{k+1}$ is reused in the next update of $x_{k+2}$. Therefore, each iteration requires only one normalized Sinkhorn step. The extra cost is negligible, since it consists only of vector additions and scalar multiplications.

\begin{algorithm}[htbp]
\caption{Acc-Sinkhorn with Homotopy}
\label{alg:A2MD-homotopy}
\begin{algorithmic}[1]
\STATE \textbf{Input:} $x_0\in\mathbb{R}^n$, $w_0\in\mathbb{R}^n$, $0<\mu_0<1$, $m_0\geq 1$, and maxIt
\FOR{$k=0,1,\dots,\text{maxIt}$}
    \STATE $(x_{k+1},w_{k+1})=\text{Acc-Sinkhorn}(x_k,w_k,\mu_k,m_k)$
    \STATE $\mu_{k+1}=\mu_k/2$, \quad $m_{k+1}=\lfloor\sqrt{2}\,m_k\rfloor+1$
\ENDFOR
\end{algorithmic}
\end{algorithm}



\paragraph{Convergence Analysis.}
We analyze the convergence of \eqref{eq:scheme} through its continuous-time flow and discretization effects. Define the Lyapunov function
\begin{equation}
\mathcal{E}(x,y;\mu,D):=f(x)-f(x^\star)+\frac{\mu}{2}\|y-x^\star\|_{D(p(x))}^2,
\qquad
p(x):=\nabla\phi^*(\nabla f(x)),
\end{equation}
where $x^\star\in N^\perp$ is the optimal solution. The diagonal matrix $D(z)$ is
\begin{equation}\label{eq:D}
D(z):=\mathrm{diag}\!\left(b.*(\exp(z)-\mathbf{1})./z\right),
\end{equation}
where the quotient is entrywise and is understood by continuous extension at $z=0$, with $D(0)=\nabla^2\phi(0)$. Equivalently, $D(z)z=\nabla\phi(z)$.

\begin{lemma}\label{assump:boundedness}
There exists $R>0$ such that, for all iterates $(x_k,y_k)$ of \eqref{eq:scheme} with $\alpha$ sufficiently small,
\begin{equation}\label{eq:boundedness}
\|x_k-x^\star\|_{D(p(x_k))}\le R,\qquad \|y_k-x^\star\|\le R,\qquad k\ge 0.
\end{equation}
\end{lemma}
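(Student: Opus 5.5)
The plan is to prove boundedness through a Lyapunov-type argument on the discrete scheme \eqref{eq:scheme}. Small $\alpha$ makes the scheme a small perturbation of the continuous HNAG flow, and that flow dissipates $\mathcal E$. The first ingredient is a bound on $f$-sublevel sets in the gauge $N^\perp$. Since $f$ is convex, shift invariant, and coercive on $N^\perp$, the set $\{x\in N^\perp: f(x)\le f(x_0)+c\}$ is bounded, with explicit bounds of order $\|C\|_\infty$ in the spirit of \citet{carlier2022linear}. On such a set the column marginals $c_P(u(x),x)$ are bounded above and below. Hence $p(x)=\nabla\phi^*(\nabla f(x))=\log(c_P./b)$ is bounded, and $D(p(x))$ is uniformly comparable to $\mathrm{diag}(b)$. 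Both sides of the first inequality in \eqref{eq:boundedness} are therefore controlled by Euclidean bounds on $x_k$.

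The core step is a one-step estimate of the form
\[
\mathcal E(x_{k+1},y_{k+1};\mu,D_{k+1})\le (1+c\alpha)^{-1}\mathcal E(x_k,y_k;\mu,D_k)+\tfrac{\mu}{2}\|y_k-x^\star\|^2_{D_{k+1}-D_k},
\]
where $D_k:=D(p(x_k))$. To obtain it, expand $f(x_{k+1})-f(x_k)$ using the dual relative smoothness $D_{\phi^*}\le D_{f^*}$ (the constant is $L=1$). Expand the $y$-part using $D(z)z=\nabla\phi(z)$, which turns $\langle \nabla\phi^*(\nabla f(x_{k+1})),y-x^\star\rangle$ into a gradient inner product. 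Then use convexity of $f$, as in the standard HNAG analysis of \citet{chen2019orderoptimizationmethodsbased,chen2025hnagsuperfastacceleratedgradient}. For $\alpha$ small, the cross terms of order $\alpha^2$ are absorbed by the negative terms $-\alpha\|\nabla\phi^*(\nabla f)\|^2$.

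The main obstacle is the metric-variation term. It is exactly the term flagged in the Limitations paragraph, and it is handled by induction. Assume $\mathcal E_j\le \mathcal E_0+1$ for all $j\le k$. Then $x_j$ stays in a fixed sublevel set. Moreover, $\|y_j-x^\star\|$ is bounded because $\mu\|y_j-x^\star\|^2_{D_j}\le 2\mathcal E_j$ and $D_j\succeq c_0 I$. Since $p$ is Lipschitz on the sublevel set, $\|D_{k+1}-D_k\|\lesssim\|x_{k+1}-x_k\|$. From the $x$-update, $\|x_{k+1}-x_k\|\lesssim\alpha(\|y_k-x_{k+1}\|+\alpha^{-1}\|p(x_k)\|)$. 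Here $\|p(x_k)\|$ is controlled by $\sqrt{f(x_k)-f^\star}$ through the relative smoothness of Sinkhorn. This gives a variation term at most $C\alpha\,\mathcal E_k$ plus terms absorbed by the dissipation. For $\alpha$ small enough that $C\alpha<c\alpha/2$ fails to matter, i.e.\ choosing the constant in front appropriately, one obtains $\mathcal E_{k+1}\le\mathcal E_k$, which closes the induction. If direct absorption fails, the fallback is to sum the variation terms over $k$ and show they form a convergent series, telescoping $\sum_k\|D_{k+1}-D_k\|$ against the dissipated $\sum_k\|p(x_k)\|^2$. Either way, $\mathcal E_k\le\mathcal E_0+1$ for all $k$. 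The first paragraph then yields $R$, depending on $\mathcal E_0$, $\|C\|_\infty$, $\min_j b_j$, and $\mu$, such that \eqref{eq:boundedness} holds.
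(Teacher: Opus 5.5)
Your argument has a genuine gap, and it sits at the step you yourself flag as the main obstacle. The claimed bound ``variation term at most $C\alpha\,\mathcal E_k$'' does not follow from your estimates, and so the induction does not close. What your estimates actually give is $\frac{\mu}{2}\|y_k-x^\star\|^2_{D_{k+1}-D_k}\le\frac{\mu}{2}\|y_k-x^\star\|^2\,\|D_{k+1}-D_k\|\lesssim\mathcal E_k\,\|x_{k+1}-x_k\|$, with $\|x_{k+1}-x_k\|\le\alpha\|y_k-x_{k+1}\|+\|p(x_k)\|$. Neither summand carries a factor $\alpha$:
\begin{itemize}
\item $p(x_k)$ is a full Sinkhorn displacement (its coefficient is $\alpha\beta=1$), controlled only by $\sqrt{f(x_k)-f(x^\star)}$;
\item with $\alpha\asymp\sqrt{\mu}$, one has $\alpha\|y_k-x^\star\|\lesssim\alpha\sqrt{\mathcal E_k/\mu}=O(\sqrt{\mathcal E_k})$.
\end{itemize}
So the variation is of size $\mathcal E_k\bigl(\|p(x_k)\|+\sqrt{\mathcal E_k}\bigr)$ with $\alpha$-independent constants. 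The dissipation of $\mathcal E$, by contrast, is only $\alpha\,\mathcal E_{k+1}$. Applying Young's inequality against the $\|p\|^2$-type dissipation leaves a remainder of order $\mathcal E_k^2$, which $\alpha\mathcal E_{k+1}$ dominates only if $\mathcal E_k\lesssim\alpha$. The $\mathcal E_k^{3/2}$ part needs $\mathcal E_k\lesssim\alpha^2$. Under your hypothesis $\mathcal E_k\le\mathcal E_0+1$, both conditions fail precisely as $\alpha\to 0$, so shrinking $\alpha$ makes things worse, not better. Even if you were granted $C\alpha\mathcal E_k$, beating $c\alpha\mathcal E_k$ is the condition $C<c$ on constants, which small $\alpha$ cannot enforce.

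The fallback fails too. $\sum_k\|D_{k+1}-D_k\|$ does not telescope, since $D_k$ is not monotone. Square-summability of $\|p(x_k)\|$ gives no control of $\sum_k\|p(x_k)\|$.

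There is also a second, repairable omission. With the kinetic energy measured from $x^\star$, the one-step computation produces the positive term $+\frac{\alpha\mu}{2}\|x_{k+1}-x^\star\|^2_{D_{k+1}}$. This is the discrete form of $+\frac{\mu}{2}\|x-x^\star\|^2_{\mathcal D}$ in the paper's continuous identity, and convexity alone does not cancel it. You need $D_f(x^\star,x)\ge\frac{\mu}{2}\|x-x^\star\|^2_{D(p(x))}$ on the sublevel set. This holds for small $\mu$ by the local strong convexity shown in the proof of Lemma~\ref{prop:PL}, but you must invoke it explicitly.

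The paper avoids both issues by not measuring the kinetic energy from $x^\star$. It passes to the velocity $v_k=y_k-x_k$ and the shifted energy $\widetilde E_k=f(x_k)-f(x^\star)+\frac{\mu}{2}\|v_k\|^2_{D_k}-\frac{\theta}{\tilde L}D_{\phi^*}(0,\nabla f(x_k))$. With this choice, no $\|x-x^\star\|$ term arises. The metric variation acts on the velocity, as $\frac{\mu}{2}\|v_k\|^2_{D_{k+1}-D_k}$, and is weighed against the dissipated $-\|v_{k+1}\|^2_{D_{k+1}}$. The cross term $\frac{\mu}{L}\langle D_{k+1}v_{k+1},\nabla\phi^*(\nabla f(x_k))\rangle$ coming from the explicit Sinkhorn displacement is weighed against $D_{\phi^*}(0,\nabla f(x_k))$.

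Even so, the paper does not bound these quantities unconditionally. It reduces $\widetilde E_{k+1}\le\widetilde E_k$ to the two per-step, checkable inequalities of Theorem~\ref{thm:condition-boundedness}, which it argues hold for small $\alpha$ and can be enforced by backtracking on $\alpha$. This is consistent with the Limitations paragraph, which names your term $\|y_k-x^\star\|^2_{\mathcal D_{k+1}-\mathcal D_k}$ as hard to bound uniformly. Your proposal is therefore missing two ingredients: a Lyapunov function whose metric-variation term can be compared with something it dissipates, and an explicit per-step control of $D_{k+1}-D_k$. The estimates you list do not supply the unconditional absorption your induction asserts.
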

The proof is given in Appendix~\ref{app:boundedness-iterates}, using a different Lyapunov function.
Although the boundedness can be established for $\alpha < \sqrt{2\mu}$, in practice, we choose the exact upper bound $\alpha=2\sqrt{\mu}$, and the iterates are always observed to be bounded in our experiments.


Under Lemma~\ref{assump:boundedness}, our main result is as follows.
\begin{theorem}
    \label{thm:homotopy}
Choose $(x_0,y_0)$ and $\mu_0$ such that
$$
\mathcal E(x_0,y_0;\mu_0,D(p(x_0)))\le (R^2+1)\mu_0.
$$
Let $(x_k,y_k,\mu_k)$ be generated by Algorithm~\ref{alg:A2MD-homotopy} and assume \eqref{eq:boundedness} holds. Then
\begin{equation}\label{eq:Ek-ek}
\mathcal E(x_k,y_k;\mu_k,D(p(x_k)))\le (R^2+1)\mu_k,
\qquad\forall\,k\ge 0.
\end{equation}
Moreover, let
$
M_k:=\sum_{i=0}^k m_i
$
be the total number of inner iterations after the $k$th outer loop, and
$
C_*:=\frac{\sqrt{2}-1}{(\sqrt{2L_F}+\sqrt{2\mu_0})\ln\bigl(2(R^2+1)\bigr)}
$
be a constant. Then
\begin{equation}
\mathcal E(x_k,y_k;\mu_k,D(p(x_k)))\le \frac{R^2+1}{\bigl(C_*M_k+\mu_0^{-1/2}\bigr)^2}
\qquad\forall\,k\ge 0.
\end{equation}
In particular, it takes $M_k=O(\tau^{-1/2})$ iterations to reach accuracy
$
\mathcal E(x_k,y_k;\mu_k,D(p(x_k)))\leq C \tau.
$
\end{theorem}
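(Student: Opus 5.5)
The proof goes by induction on the outer homotopy index $k$, driven by a one-loop linear contraction of the Lyapunov function for the inner scheme \eqref{eq:scheme} at fixed $\mu$.

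\textbf{Inner contraction.} First I would establish the inner estimate: for fixed $\mu$ and the stability condition on $\alpha$, the iterates of \eqref{eq:scheme} satisfy
\begin{equation*}
\mathcal E_{j+1}\le \frac{1}{1+c\sqrt{\mu}}\,\mathcal E_j + \frac{\mu}{2}\|y_j-x^\star\|^2_{\mathcal D_{j+1}-\mathcal D_j},\qquad \mathcal D_j:=D(p(x_j)).
\end{equation*}
Here $c\sqrt{\mu}=\alpha/(\ldots)$, with the denominator containing $\sqrt{2L_F}+\sqrt{2\mu}$. To prove it I would follow the HNAG discrete Lyapunov argument. The $x$-equation is handled using dual relative smoothness with $L=1$, i.e. $D_{\phi^*}\le D_{f^*}$, which gives the descent inequality
\begin{equation*}
f(x_{k+1})\le f(x_k)-\langle \nabla f(x_k),\,p(x_k)\rangle+D_{\phi^*}\text{-type terms}.
\end{equation*}
The $y$-equation is handled by expanding $\frac{\mu}{2}\|y-x^\star\|_D^2$ and using the identity $D(z)z=\nabla\phi(z)$. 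This identity turns $\langle D(p(x_{k+1}))(y_{k+1}-x^\star),\,p(x_{k+1})\rangle$ into a gradient pairing, since $\nabla\phi(p(x))=\nabla f(x)$ by \eqref{eq:dphiphistar}. Convexity then gives $-\langle \nabla f(x_{k+1}),x_{k+1}-x^\star\rangle\le -(f(x_{k+1})-f^\star)-\text{(strong-convexity term)}$.

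\textbf{Stability condition and one outer loop.} The metric-variation term is exactly what the paper's stability condition controls. I would take that condition to mean that this term is absorbed into the contraction, so that $\mathcal E_{j+1}\le (1+c\sqrt\mu)^{-1}\mathcal E_j$ up to an error at most of order $\mu$ times a small multiple of $R^2$. Lemma~\ref{assump:boundedness} bounds $\|y_j-x^\star\|\le R$, and this is where the constant $R^2+1$ in \eqref{eq:Ek-ek} originates. Iterating $m$ times gives
\begin{equation*}
\mathcal E_m\le (1+c\sqrt\mu)^{-m}\mathcal E_0+\mu\cdot(\text{small}).
\end{equation*}

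\textbf{Homotopy induction.} Assume $\mathcal E(x_k,y_k;\mu_k)\le (R^2+1)\mu_k$. Changing $\mu$ to $\mu_{k+1}=\mu_k/2$ only decreases the quadratic part, so the loop starts from energy at most $(R^2+1)\mu_k=2(R^2+1)\mu_{k+1}$. To obtain $\mathcal E\le (R^2+1)\mu_{k+1}$ it suffices that $(1+c\sqrt{\mu_k})^{-m_k}\le 1/(2(R^2+1))$. Using $\ln(1+t)\ge t/(1+t)$, this holds once
\begin{equation*}
m_k\ge \frac{(\sqrt{2L_F}+\sqrt{2\mu_0})\ln(2(R^2+1))}{\sqrt{2\mu_k}}.
\end{equation*}
Since $\mu_k=\mu_02^{-k}$, the requirement grows by a factor $\sqrt2$ per outer step, which matches the schedule $m_{k+1}=\lfloor\sqrt2 m_k\rfloor+1$ provided $m_0$ meets the bound at $k=0$. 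This gives \eqref{eq:Ek-ek}.

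\textbf{Rate in terms of total iterations.} Summing the geometric sequence,
\begin{equation*}
M_k=\sum_{i\le k} m_i\lesssim \frac{\sqrt2}{\sqrt2-1}\,m_k,\qquad m_k\propto \mu_k^{-1/2}.
\end{equation*}
Inverting gives $\mu_k^{-1/2}\ge C_*M_k+\mu_0^{-1/2}$, with $C_*$ exactly the stated constant, and hence $\mathcal E\le (R^2+1)\mu_k\le (R^2+1)/(C_*M_k+\mu_0^{-1/2})^2$. The complexity claim follows by setting $\tau\sim \mu_k$, which requires $M_k=O(\tau^{-1/2})$.

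\textbf{Main obstacle.} The hard step is the discrete inner contraction with the varying metric $\mathcal D_j$. The cross terms must be matched using the relative smoothness and strong-convexity constant $L_F$ in the mirror geometry, and the variation term must be bounded by means of Lemma~\ref{assump:boundedness} and the stability condition. I would first do the computation with $\mathcal D$ frozen, following \citet{chen2025hnagsuperfastacceleratedgradient}, and then treat $\mathcal D_{j+1}-\mathcal D_j$ as a perturbation.
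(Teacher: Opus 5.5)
\textbf{Verdict: genuine gap, shared with the paper.} Your outline matches the paper's proof, but the step that turns per-step errors into a per-loop error does not go through, and the paper's own proof makes the same slip.

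The shared skeleton is a fixed-$\mu$ perturbed contraction with factor $1/(1+\alpha)$, a loop length satisfying $(1+\alpha_k)^{-m_k}\le 1/(2(R^2+1))$, induction over the halvings $\mu_{k+1}=\mu_k/2$, and a geometric sum for $M_k$. Your check that $m_{k+1}=\lfloor\sqrt2\,m_k\rfloor+1$ preserves the loop-length requirement once $m_0$ meets it is more careful than the paper, which only asserts it.

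\textbf{The main gap.} The argument breaks at ``iterating $m$ times gives $\mathcal E_m\le(1+c\sqrt\mu)^{-m}\mathcal E_0+\mu\cdot(\text{small})$''.
\begin{itemize}
\item Summing a per-step error bound $e$ over a loop under the contraction $(1+\alpha)^{-1}$ gives $e\sum_{i=1}^{m}(1+\alpha)^{-i}=e\bigl(1-(1+\alpha)^{-m}\bigr)/\alpha$. For the loop length you impose, this is at least $e/(2\alpha)$.
\item For the metric-variation term, the only per-step bound available is Lemma~\ref{lem:chenge-D}: $\frac{\mu}{2}\|y_j-x^\star\|^2_{\mathcal D_{j+1}-\mathcal D_j}\le C\frac{\mu}{2}R^2$, with $C$ independent of $\mu$.
\item The accumulated bound is therefore of order $CR^2\mu/\alpha\sim CR^2\sqrt{\mu}$. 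Closing the induction needs it to be at most $R^2\mu_{k+1}=R^2\mu_k/2$. For small $\mu_k$ this fails, so \eqref{eq:Ek-ek} cannot be propagated.
\end{itemize}
What you actually need is a per-step bound of order $\alpha\mu R^2$, i.e.\ $\|\mathcal D_{j+1}-\mathcal D_j\|=O(\sqrt{\mu})$ along each loop. Neither \eqref{eq:boundedness} nor the conditions of Theorem~\ref{thm:condition-boundedness} give this. Reading the stability condition as ``absorbing'' the term assumes exactly what must be shown. The paper's passage from \eqref{eq:inner-contraction} to \eqref{eq:mk-step} records the accumulated error as $\mu_{k+1}R^2$ and so drops the same factor $1/\alpha_k$; following the paper does not close this step either.

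\textbf{A second issue in your inner estimate.} It contains no trace of the positive polarization remainder $\frac{\mu}{2}\|x_{j+1}-x^\star\|^2_{\mathcal D_{j+1}}$, the discrete counterpart of the $+\frac{\mu}{2}\|x-x^\star\|^2_{\mathcal D}$ term in the continuous-time identity. Your plan can only cancel it through the ``strong-convexity term''. That would require $D_f(x^\star,x)\ge\frac{\mu}{2}\|x-x^\star\|^2_{\mathcal D}$, which is not available uniformly in $\mu$: on bounded subsets of $N^\perp$, $f$ is strongly convex only with an unknown modulus. At best this holds once $\mu$ drops below an unknown threshold.

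The paper instead bounds this term by $\frac{\mu}{2}R^2$ using the $x$-part of \eqref{eq:boundedness}. This yields the per-step error $\frac{\alpha\mu}{2}R^2$ of Lemma~\ref{thm:single-step}. Because it carries the factor $\alpha$, it sums over a loop to at most $\frac{\mu_k}{2}R^2=R^2\mu_{k+1}$, which is exactly the error written in \eqref{eq:mk-step}.
\begin{itemize}
\item So this term, not the metric variation as you suggest, is the source of the $R^2$ in \eqref{eq:Ek-ek}.
\item It already uses the entire error budget. Even an $O(\alpha\mu R^2)$ bound on the variation term would therefore force you to rebalance the constants, e.g.\ by lengthening the loop.
\end{itemize}

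\textbf{A possible repair.} One plausible route to the needed bound is $\|\mathcal D_j-D(0)\|\lesssim\|p_j\|\lesssim\sqrt{f(x_j)-f(x^\star)}\le\sqrt{\mathcal E_j}$. This makes the variation $O(\sqrt\mu)$ as long as $\mathcal E_j=O(\mu)$ throughout the loop. That property would have to be propagated by an induction over the inner steps, and this is the ingredient missing from both your proposal and the paper.
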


We give an outline of the proof in the rest of this section and leave details to Appendix~\ref{app:proofs-accelerated}.
\paragraph{Continuous-time ODE}
We first consider the continuous-time ODE of the scheme \eqref{eq:scheme}
\begin{equation}
\begin{aligned}
x' = y - x - \beta \nabla \phi^*(\nabla f(x)),\qquad 
y' = x - y - \frac{1}{\mu} \nabla\phi^*(\nabla f(x)),
\end{aligned}
\end{equation}
where $\beta>0$ is a parameter. The parameter $\mu$ is omited in the Lyapunov function when it is fixed and clear from the context.

Let $z=(x,y)$ and let $\mathcal{G}(z)$ denote the right-hand side of the ODE. We have the following identity for the time derivative of the Lyapunov function along the trajectory of the ODE. The decay is exponential up to positive perturbation terms.

\begin{lemma}
Let $z(t)=(x(t),y(t))$ be a trajectory of $z'=\mathcal{G}(z)$. Define
$\mathcal D(t):=D(p(x(t))),$
where $D(\cdot)$ is the diagonal map \eqref{eq:D}. Then, for all $t\ge 0$, the following identity holds:
$$
\begin{aligned}
\frac{\dd}{\dd t}\mathcal{E}(x(t),y(t);\mathcal D(t))
={}& -\mathcal{E}(x,y;\mathcal D) - D_f(x^\star,x) -\beta\|p(x)\|_{\mathcal D(t)}^2 -\frac{\mu}{2}\|x-y\|_{\mathcal D(t)}^2
\\
&\quad + \frac{\mu}{2}\|x-x^\star\|_{\mathcal D(t)}^2+\frac{\mu}{2}\|y-x^\star\|_{\mathcal D'(t)}^2.
\end{aligned}
$$
\end{lemma}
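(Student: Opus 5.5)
The plan is a direct computation: differentiate $\mathcal E$ along the trajectory with the chain rule, then regroup the terms with two algebraic identities. The first identity links the mirror map to the metric $D$. By definition \eqref{eq:D}, $D(z)z=\nabla\phi(z)$. Taking $z=p(x)=\nabla\phi^*(\nabla f(x))$ and using \eqref{eq:dphiphistar} gives
$$
\mathcal D(t)\,p(x)=\nabla\phi\bigl(\nabla\phi^*(\nabla f(x))\bigr)=\nabla f(x).
$$
So every occurrence of $\nabla f(x)$ can be written as $\mathcal D p$, and every $p$ paired against $\mathcal D$ becomes $\nabla f(x)$. The second identity is the polarization formula for the symmetric matrix $\mathcal D(t)$.

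First I write the time derivative. Since $\mathcal D(t)$ is diagonal, hence symmetric, and is assumed differentiable along the trajectory,
$$
\frac{\dd}{\dd t}\mathcal E=\langle\nabla f(x),x'\rangle+\mu\langle y-x^\star,\mathcal D\,y'\rangle+\frac{\mu}{2}\|y-x^\star\|^2_{\mathcal D'(t)}.
$$
The last term is already one of the perturbation terms in the statement and is left as is. Next I substitute $x'=y-x-\beta p$ into the first term and use $\nabla f(x)=\mathcal Dp$. This gives $\langle \mathcal D p,y-x\rangle-\beta\|p\|_{\mathcal D}^2$. Then I substitute $y'=x-y-\mu^{-1}p$ into the second term, which gives $\mu\langle y-x^\star,\mathcal D(x-y)\rangle-\langle y-x^\star,\mathcal D p\rangle$.

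The two cross terms involving $\mathcal Dp$ combine as
$$
\langle \mathcal Dp,(y-x)-(y-x^\star)\rangle=-\langle\nabla f(x),x-x^\star\rangle.
$$
The definition of the Bregman divergence rewrites this as $-(f(x)-f(x^\star))-D_f(x^\star,x)$. This cancellation, where the $y$-dependence drops out, is exactly why the coefficient $1/\mu$ in the $y$-equation is matched to the weight $\mu/2$ in $\mathcal E$.

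For the remaining term I set $a=y-x^\star$ and $b=x-x^\star$, so that $x-y=b-a$. Polarization in the $\mathcal D$-inner product gives
$$
\mu\langle a,\mathcal D(b-a)\rangle=\frac{\mu}{2}\bigl(\|b\|_{\mathcal D}^2-\|a\|_{\mathcal D}^2-\|b-a\|_{\mathcal D}^2\bigr).
$$
The term $-\frac{\mu}{2}\|y-x^\star\|_{\mathcal D}^2$ combines with $-(f(x)-f(x^\star))$ to form $-\mathcal E$. The other two terms give $-\frac{\mu}{2}\|x-y\|^2_{\mathcal D}$ and $+\frac{\mu}{2}\|x-x^\star\|^2_{\mathcal D}$. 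Collecting everything yields the claimed identity.

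No step here is really an obstacle. The only point needing care is justifying $\mathcal D p=\nabla f(x)$, which requires $\nabla f(x)$ to lie in the domain of $\nabla\phi^*$. This holds because $b+\nabla f(x)=c_P>0$. A second point is checking that $D$ extends continuously at $z=0$ with $D(0)=\nabla^2\phi(0)$, so that the identity $D(z)z=\nabla\phi(z)$ holds entrywise including at $z_j=0$. Differentiability of $t\mapsto\mathcal D(t)$ follows from smoothness of $f$, $\nabla\phi^*$ and $D$ along the trajectory.
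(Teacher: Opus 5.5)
Your proposal is correct and follows the paper's proof essentially line by line. You differentiate $\mathcal E$ along the trajectory, substitute the ODE, use $\mathcal D(t)p(x)=\nabla f(x)$ to merge the two cross terms into $-\langle\nabla f(x),x-x^\star\rangle=-(f(x)-f(x^\star))-D_f(x^\star,x)$, and finish with the polarization identity in the $\mathcal D$-inner product; your added remarks on the domain of $\nabla\phi^*$ (via $b+\nabla f(x)=c_P>0$) and on the continuous extension of $D$ at $0$ are valid justifications that the paper leaves implicit.
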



\paragraph{Discretization and convergence analysis}

We define this discrete metric by
$$
p_k:=\nabla\phi^*(\nabla f(x_k)),
\qquad
\mathcal D_k:=D(p_k),
$$
where $D(\cdot)$ is the diagonal map defined in \eqref{eq:D}. 

\begin{lemma}
\label{thm:single-step}
Let $z_k=(x_k,y_k)$ be the iterates generated by \eqref{eq:scheme} and assume \eqref{eq:boundedness} holds. Then
$$
\begin{aligned}
\mathcal{E}(z_{k+1};\mathcal D_{k+1}) - \mathcal{E}(z_k;\mathcal D_{k+1})
\le{}& -\alpha \mathcal{E}(z_{k+1};\mathcal D_{k+1}) + \frac{\alpha\mu}{2}R^2 - D_{\phi^*}(0,\nabla f(x_k))
\\
&\quad + \frac{\alpha^2}{2\mu}\|\nabla \phi^*(\nabla f(x_{k+1}))\|_{\mathcal D_{k+1}}^2 - D_{\phi^*}(\nabla f(x_{k+1}),0).
\end{aligned}
$$
\end{lemma}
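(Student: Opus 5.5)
The plan is to split the difference into its $f$-part and its quadratic part, both measured with the \emph{same} metric $\mathcal D_{k+1}$, and to bound each using the scheme \eqref{eq:scheme} rewritten in gradient form:
$$
x_{k+1}-x_k=\alpha(y_k-x_{k+1})-p_k,
\qquad
y_{k+1}-y_k=\alpha(x_{k+1}-y_{k+1})-\tfrac{\alpha}{\mu}p_{k+1}.
$$
This is the discrete analogue of the continuous identity of the previous lemma with $\beta$ replaced by the unit step. Because both quadratic terms use $\mathcal D_{k+1}$, the metric-variation term does not appear here; it is deferred to the passage from $\mathcal E(z_k;\mathcal D_{k+1})$ to $\mathcal E(z_k;\mathcal D_k)$.

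\textbf{The $f$-part.} First I would write
$$
f(x_{k+1})-f(x_k)=\langle\nabla f(x_{k+1}),x_{k+1}-x_k\rangle-D_f(x_k,x_{k+1}).
$$
The identity $D_f(x_k,x_{k+1})=D_{f^*}(\nabla f(x_{k+1}),\nabla f(x_k))$ and dual relative smoothness with $L=1$ convert this into a $D_{\phi^*}$ term. Next I substitute $x_{k+1}-x_k=\alpha(y_k-x_{k+1})-p_k$. The piece $-\langle\nabla f(x_{k+1}),p_k\rangle$ with $p_k=\nabla\phi^*(\nabla f(x_k))$ is then handled by the three-point identity for $D_{\phi^*}$ at the points $0$, $\nabla f(x_k)$, $\nabla f(x_{k+1})$. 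The goal is to produce exactly the two negative terms $-D_{\phi^*}(0,\nabla f(x_k))$ and $-D_{\phi^*}(\nabla f(x_{k+1}),0)$, cancelling the cross Bregman term against $-D_{f^*}\le -D_{\phi^*}$. The remaining gradient term is $\alpha\langle\nabla f(x_{k+1}),y_k-x_{k+1}\rangle$.

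\textbf{The quadratic part.} For the $\mathcal D_{k+1}$-weighted distance of $y$, I would use
$$
\tfrac12\|y_{k+1}-x^\star\|^2-\tfrac12\|y_k-x^\star\|^2=\langle y_{k+1}-x^\star,y_{k+1}-y_k\rangle-\tfrac12\|y_{k+1}-y_k\|^2,
$$
all norms in $\mathcal D_{k+1}$, and then insert the $y$-update. This gives $\alpha\mu\langle y_{k+1}-x^\star,x_{k+1}-y_{k+1}\rangle_{\mathcal D_{k+1}}$, which the polarization identity turns into $-\tfrac{\alpha\mu}{2}\|y_{k+1}-x^\star\|^2-\tfrac{\alpha\mu}{2}\|x_{k+1}-y_{k+1}\|^2+\tfrac{\alpha\mu}{2}\|x_{k+1}-x^\star\|^2$. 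It also gives a term $-\alpha\langle y_{k+1}-x^\star,p_{k+1}\rangle_{\mathcal D_{k+1}}$.

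The key structural fact is $\mathcal D_{k+1}p_{k+1}=\nabla\phi(p_{k+1})=\nabla f(x_{k+1})$, from $D(z)z=\nabla\phi(z)$ and \eqref{eq:dphiphistar}. So this last term equals $-\alpha\langle\nabla f(x_{k+1}),y_{k+1}-x^\star\rangle$. Splitting $y_{k+1}-x^\star=(y_{k+1}-y_k)+(y_k-x_{k+1})+(x_{k+1}-x^\star)$:
\begin{itemize}
\item the middle piece cancels the gradient term left over from the $f$-part;
\item the last piece gives $-\alpha(f(x_{k+1})-f(x^\star))-\alpha D_f(x^\star,x_{k+1})$ by convexity;
\item the first piece, $-\alpha\langle\nabla f(x_{k+1}),y_{k+1}-y_k\rangle$, is bounded by Cauchy--Schwarz in the $\mathcal D_{k+1}$ and $\mathcal D_{k+1}^{-1}$ norms, using $\|\nabla f(x_{k+1})\|_{\mathcal D_{k+1}^{-1}}=\|p_{k+1}\|_{\mathcal D_{k+1}}$, together with Young's inequality.
\end{itemize}
Young's inequality splits that first piece into $\tfrac{\mu}{2}\|y_{k+1}-y_k\|^2$, which is absorbed by the negative square above, and $\tfrac{\alpha^2}{2\mu}\|p_{k+1}\|_{\mathcal D_{k+1}}^2$, which is the stated error term.

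\textbf{Assembling, and the main obstacle.} Collecting terms gives $-\alpha\mathcal E(z_{k+1};\mathcal D_{k+1})$ together with the positive leftover $\tfrac{\alpha\mu}{2}\|x_{k+1}-x^\star\|_{\mathcal D_{k+1}}^2$. Lemma~\ref{assump:boundedness} bounds this leftover by $\tfrac{\alpha\mu}{2}R^2$, and the remaining nonpositive terms are dropped. I expect the main obstacle to be the Bregman bookkeeping in the $f$-part. The difficulty is obtaining exactly $-D_{\phi^*}(0,\nabla f(x_k))-D_{\phi^*}(\nabla f(x_{k+1}),0)$ with the correct argument orders, since $\phi^*$ is not symmetric and the inequality $D_{\phi^*}\le D_{f^*}$ must be applied in the orientation that matches the three-point identity. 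I would first try the identity $\langle\nabla f(x_{k+1}),\nabla\phi^*(\xi_k)\rangle=\langle\nabla\phi(p_{k+1}),p_k\rangle$ and expand it through $\phi$, then verify the signs carefully.
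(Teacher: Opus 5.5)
Your proof is correct and is essentially the paper's: it uses the same expansion at $z_{k+1}$ in the frozen metric $\mathcal D_{k+1}$, polarization, the identity $\mathcal D_{k+1}p_{k+1}=\nabla f(x_{k+1})$, Young's inequality with weights $\tfrac{\alpha^2}{2\mu}$ and $\tfrac{\mu}{2}$, the three-point identity, dual relative smoothness, and the $x$-part of \eqref{eq:boundedness}. The paper packages this as the continuous-time identity at $z_{k+1}$ plus the discrepancy between \eqref{eq:scheme} and an implicit Euler step, whereas you substitute the scheme directly; the orientation you were worried about works out, since $-\langle\nabla f(x_{k+1}),p_k\rangle=-D_{\phi^*}(\nabla f(x_{k+1}),0)-D_{\phi^*}(0,\nabla f(x_k))+D_{\phi^*}(\nabla f(x_{k+1}),\nabla f(x_k))$, and the last term is dominated by $D_f(x_k,x_{k+1})=D_{f^*}(\nabla f(x_{k+1}),\nabla f(x_k))$.
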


We next bound the change of the Lyapunov function induced by the change of the metric $D$.

\begin{lemma}
    \label{lem:chenge-D}
Let $z_k=(x_k,y_k)$ be the iterates generated by \eqref{eq:scheme} and assume \eqref{eq:boundedness} holds. Then there exists a constant $C>0$ such that for any $k\ge 1$,
$$
\mathcal{E}(z_k;\mathcal D_{k+1})-\mathcal{E}(z_k;\mathcal D_k)
=\frac{\mu}{2}\|y_k-x^\star\|_{\mathcal D_{k+1}-\mathcal D_k}^2
\le C\,\frac{\mu}{2}R^2.
$$
\end{lemma}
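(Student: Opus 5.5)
The statement has two parts: an exact identity and a bound. I would prove the identity by direct computation and the bound by showing that the diagonal weights $\mathcal D_k$ stay in a fixed compact range along the iterates.

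\textbf{The identity.} In $\mathcal E(z;D)=f(x)-f(x^\star)+\frac{\mu}{2}\|y-x^\star\|_D^2$, only the quadratic term depends on the metric. Holding $z_k=(x_k,y_k)$ fixed and changing $\mathcal D_k$ to $\mathcal D_{k+1}$ leaves $f(x_k)-f(x^\star)$ unchanged. The difference is therefore
$$
\frac{\mu}{2}\Bigl(\|y_k-x^\star\|_{\mathcal D_{k+1}}^2-\|y_k-x^\star\|_{\mathcal D_k}^2\Bigr)
=\frac{\mu}{2}\|y_k-x^\star\|_{\mathcal D_{k+1}-\mathcal D_k}^2 ,
$$
where the last expression is understood as the (possibly indefinite) quadratic form $(y_k-x^\star)^\top(\mathcal D_{k+1}-\mathcal D_k)(y_k-x^\star)$.

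\textbf{The bound.} I would use that $\mathcal D_{k+1}-\mathcal D_k$ is diagonal, so
$$
\|y_k-x^\star\|_{\mathcal D_{k+1}-\mathcal D_k}^2\le \max_j |(\mathcal D_{k+1})_{jj}-(\mathcal D_k)_{jj}|\,\|y_k-x^\star\|^2\le \Bigl(\max_j |(\mathcal D_{k+1})_{jj}-(\mathcal D_k)_{jj}|\Bigr)R^2 ,
$$
using \eqref{eq:boundedness} for $y_k$. It then suffices to show that every diagonal entry $b_j\,(e^{z}-1)/z$ with $z=(p_k)_j$ lies in a fixed interval $[b_j c_1, b_j c_2]$ with $0<c_1\le c_2$ independent of $k$. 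Then the difference is at most $C=c_2\max_j b_j$, and one can even take $\max_j b_j\le 1$.

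\textbf{Bounding the mirror steps.} Since $\xi\mapsto (e^{\xi}-1)/\xi$ is continuous and positive, it is enough to bound $p_k$ uniformly. By definition
$$
p_k=\nabla\phi^*(\nabla f(x_k))=\log\bigl(c_P(u(x_k),x_k)./b\bigr).
$$
After a row-scaling step, $\sum_j c_P=\sum_i a_i=1$, so $c_P\le 1$ and $(p_k)_j\le \log(1/b_j)$. For the lower bound I need $(c_P)_j$ bounded away from zero. With $u=u(x_k)$,
$$
(c_P)_j=e^{(x_k)_j}\sum_i \frac{a_i\,K_{ij}}{\sum_l K_{il}e^{(x_k)_l}}
\ge e^{(x_k)_j-\max_l (x_k)_l}\,\min_{i,l}K_{il}\,\min_{i,l}K_{il}^{-1}\ge e^{-\mathrm{osc}(x_k)}e^{-2\|C\|_\infty},
$$
where $\mathrm{osc}(x_k)=\max_l (x_k)_l-\min_l (x_k)_l$. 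So a uniform bound on $\|x_k\|_\infty$ is required.

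\textbf{Main obstacle.} The hardest step is obtaining that uniform bound on $x_k$. Lemma~\ref{assump:boundedness} controls $\|x_k-x^\star\|_{D(p(x_k))}$, but the weight $D(p(x_k))$ itself degenerates when $p_k\to-\infty$; this is circular, because bounding the weight is exactly what I am trying to do. I would break the circularity using monotonicity: $D(z)_{jj}\ge b_j\min(1,\cdot)$ grows at least like $b_j|z_j|^{-1}$ as $z_j\to-\infty$. That gives $|(x_k-x^\star)_j|^2\,b_j/|(p_k)_j|\le R^2$, so $|(x_k)_j|$ grows at most like $\sqrt{|(p_k)_j|}$. Combining this with the logarithmic estimate above, $|(p_k)_j|\lesssim \mathrm{osc}(x_k)+2\|C\|_\infty\lesssim \sqrt{\max_j|(p_k)_j|}+\mathrm{const}$. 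This forces $\max_j|(p_k)_j|$ to be bounded.

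If this self-improving step fails, the fallback is to invoke the iterate-boundedness proof in Appendix~\ref{app:boundedness-iterates}, which I expect also bounds $\|x_k\|$ directly in the Euclidean norm, as was done for normalized Sinkhorn \citep{carlier2022linear}. The bound on $p_k$ then follows immediately.
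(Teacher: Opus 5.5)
Your argument is correct and follows the paper's skeleton. The identity holds by definition. Then $\|y_k-x^\star\|^2_{\mathcal D_{k+1}-\mathcal D_k}\le \max_j|(\mathcal D_{k+1})_{jj}-(\mathcal D_k)_{jj}|\,R^2$. Finally, the diagonal weights are controlled uniformly through $c_{P_k}./b$.

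The difference lies in that last step. The paper simply asserts that $c_{P_k}./b-\mathbf 1$ stays in a fixed compact interval ``under the assumption''. It then invokes Lipschitz continuity of $h(g)=g/\log(1+g)$, which is superfluous because its final constant $2L(M-m)$ only uses the range. You instead actually derive a two-sided bound on $p_k$.

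Your self-improving argument does close, giving $P\le A+B\sqrt{\max\{1,P\}}$ for $P=\max_j|(p_k)_j|$, once you repair the lower bound on $(c_P)_j$. As written that bound is false: for the cost $C=0$ and $x_k=0$ one has $(c_P)_j=1/m$. You dropped the factor $1/m$ from $\sum_l K_{il}e^{(x_k)_l}\le m\,e^{\max_l(x_k)_l}$, where $m$ is the number of columns. This only changes constants.

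More importantly, your ``main obstacle'' is self-inflicted. The entries of $\mathcal D_k$ are positive, so $|(\mathcal D_{k+1})_{jj}-(\mathcal D_k)_{jj}|\le\max\{(\mathcal D_{k+1})_{jj},(\mathcal D_k)_{jj}\}$, and only an upper bound is needed. Indeed, your constant $C=c_2\max_j b_j$ never uses $c_1$.

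You already had that upper bound. Since $e^{(p_k)_j}=(c_{P_k})_j/b_j$, the entry $(\mathcal D_k)_{jj}=\bigl((c_{P_k})_j-b_j\bigr)/\bigl(\log(c_{P_k})_j-\log b_j\bigr)$ is the logarithmic mean of $(c_{P_k})_j$ and $b_j$. Both lie in $(0,1]$, so $(\mathcal D_k)_{jj}\in(0,1]$. Equivalently, combine your bound $(p_k)_j\le\log(1/b_j)$ with the monotonicity of $(e^z-1)/z$ and $\log(1/b_j)\ge 1-b_j$.

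Hence the lemma holds with $C=1$, using only $\|y_k-x^\star\|\le R$. The $x$-part of the boundedness assumption, the lower bound on $c_{P_k}$, and the fallback to the appendix are all unnecessary.
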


Combining the above results, we conclude that the Lyapunov function decays geometrically up to a bounded perturbation term of order $\mu R^2$.

\begin{theorem}[Perturbed exponential decay of the Lyapunov function]\label{coro:perturbed-decay}
Let $z_k=(x_k,y_k)$ be the iterates generated by \eqref{eq:scheme} and assume \eqref{eq:boundedness} holds. Then there exists a constant $C>0$ such that for any $k\ge 1$,
$$
\mathcal{E}(x_{k+1},y_{k+1};\mathcal D_{k+1}) \le \left(\frac{1}{1+\alpha}\right)^{k+1}\mathcal{E}(x_0,y_0;D_0) + C\mu R^2.
$$
\end{theorem}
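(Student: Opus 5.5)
We combine Lemma~\ref{thm:single-step} and Lemma~\ref{lem:chenge-D} into a one-step perturbed contraction and then unroll it. The first step is to discard the nonpositive Bregman terms in Lemma~\ref{thm:single-step}. The two terms $-D_{\phi^*}(0,\nabla f(x_k))$ and $-D_{\phi^*}(\nabla f(x_{k+1}),0)$ must absorb the term $\frac{\alpha^2}{2\mu}\|p_{k+1}\|_{\mathcal D_{k+1}}^2$. With $\alpha^2\le 2\mu$ (the regime $\alpha=\sqrt{2\mu}$ or smaller), the coefficient $\frac{\alpha^2}{2\mu}$ is at most $1$. So it suffices to show
$$
\|p_{k+1}\|_{\mathcal D_{k+1}}^2\lesssim D_{\phi^*}(\nabla f(x_{k+1}),0),
$$
up to a bounded factor. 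Any excess would be charged to the $\mathcal O(\mu R^2)$ perturbation, using the boundedness in Lemma~\ref{assump:boundedness}.

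For this comparison we use $D(z)z=\nabla\phi(z)$. It gives $\|p\|_{D(p)}^2=\langle \nabla\phi(p),p\rangle$. Also $\nabla\phi(p)=\nabla f(x)$, by \eqref{eq:dphiphistar}. Hence the left side equals $\langle \nabla f(x_{k+1}),\nabla\phi^*(\nabla f(x_{k+1}))\rangle$. This is the symmetrized Bregman divergence $D_{\phi^*}(\xi,0)+D_{\phi^*}(0,\xi)$ at $\xi=\nabla f(x_{k+1})$. The term $D_{\phi^*}(\xi,0)$ cancels exactly against the corresponding negative term. The leftover $D_{\phi^*}(0,\nabla f(x_{k+1}))$ is controlled on the bounded set of iterates by a constant multiple of $D_{\phi^*}(\nabla f(x_{k+1}),0)$. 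If that fails, we bound it by $C\mu R^2$ using $\alpha^2/(2\mu)\le 1$ and the uniform boundedness of $p_{k+1}$. The conclusion is
$$
(1+\alpha)\mathcal E(z_{k+1};\mathcal D_{k+1})\le \mathcal E(z_k;\mathcal D_{k+1})+C_1\alpha\mu R^2 .
$$

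Next, Lemma~\ref{lem:chenge-D} replaces $\mathcal E(z_k;\mathcal D_{k+1})$ by $\mathcal E(z_k;\mathcal D_k)+C_2\frac{\mu}{2}R^2$. We want the perturbation per step to be of order $\alpha\mu R^2$, so that the geometric sum $\sum_i (1+\alpha)^{-i}\,\alpha\mu R^2\le (1+\alpha)\mu R^2$ stays $\mathcal O(\mu R^2)$. The main obstacle is that the lemma only gives an $\mathcal O(\mu R^2)$ metric-variation term, not $\mathcal O(\alpha\mu R^2)$. Summed naively, this would produce $C\mu R^2/\alpha$.

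To handle this, I would first try to sharpen Lemma~\ref{lem:chenge-D} using the step structure. The iterate change $x_{k+1}-x_k$ is $\mathcal O(\alpha)$ times bounded quantities by \eqref{eq:scheme}. Also $D(\cdot)$ and $p(\cdot)$ are smooth on the bounded region. Together these give $\|\mathcal D_{k+1}-\mathcal D_k\|\le C\alpha$. Hence the metric-variation term is at most $C\alpha\frac{\mu}{2}R^2$. This is exactly where the stability condition and small $\alpha$ enter.

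With a per-step bound
$$
\mathcal E_{k+1}\le \frac{1}{1+\alpha}\mathcal E_k+C\alpha\mu R^2 ,
$$
induction gives
$$
\mathcal E_{k+1}\le (1+\alpha)^{-(k+1)}\mathcal E_0+C\alpha\mu R^2\sum_{i\ge 0}(1+\alpha)^{-i}.
$$
The geometric sum equals $(1+\alpha)/\alpha$, so the second term is $C(1+\alpha)\mu R^2\le 2C\mu R^2$. Renaming constants gives the stated bound.
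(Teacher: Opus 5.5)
You have a genuine gap in your first step. Write $\xi_{k+1}:=\nabla f(x_{k+1})$. Your identity $\|p_{k+1}\|_{\mathcal D_{k+1}}^2=D_{\phi^*}(\xi_{k+1},0)+D_{\phi^*}(0,\xi_{k+1})$ is correct, and it is exactly the identity the paper uses. After you cancel $D_{\phi^*}(\xi_{k+1},0)$, however, the leftover $\frac{\alpha^2}{2\mu}D_{\phi^*}(0,\xi_{k+1})$ cannot be closed within the same step.

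\begin{itemize}
\item This quantity is bounded by $f(x_{k+1})-f(x^\star)$ (dual relative smoothness with $L=1$), and by a constant via boundedness of $p_{k+1}$. But it carries no factor of $\mu$ or $\alpha\mu$, so it cannot be charged to $C_1\alpha\mu R^2$.
\item Comparing it to $D_{\phi^*}(\xi_{k+1},0)$ does not help, because at $\alpha^2=2\mu$ that term has already been used up. This route would only work if $\alpha^2/(2\mu)$ were bounded away from $1$ by a problem-dependent margin.
\end{itemize}

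So your one-step inequality $(1+\alpha)\mathcal E(z_{k+1};\mathcal D_{k+1})\le \mathcal E(z_k;\mathcal D_{k+1})+C_1\alpha\mu R^2$ does not follow. The missing idea is that this leftover is paid for by the \emph{next} step: Lemma~\ref{thm:single-step} at index $k+1$ contains $-D_{\phi^*}(0,\nabla f(x_{k+1}))$.

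The paper therefore keeps all the Bregman terms, multiplies the step-$j$ inequality by $(1+\alpha)^j$, and sums. For $1\le i\le k$, the net coefficient of $D_{\phi^*}(0,\nabla f(x_i))$ is $(1+\alpha)^{i-1}-(1+\alpha)^{i}=-\alpha(1+\alpha)^{i-1}\le 0$ when $\alpha^2=2\mu$. Only a single boundary term at the final index survives, and the paper passes over it quickly. Equivalently, you could carry $-D_{\phi^*}(0,\nabla f(x_k))$ inside the Lyapunov function.

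Your second step also fails as stated. The claim that $x_{k+1}-x_k$ is $\mathcal O(\alpha)$ is false. From \eqref{eq:scheme},
$$x_{k+1}-x_k=\tfrac{1}{1+\alpha}\bigl(\alpha(y_k-x_k)-\nabla\phi^*(\nabla f(x_k))\bigr),$$
so the full unit-step Sinkhorn displacement enters with weight $\tfrac{1}{1+\alpha}\approx 1$. Hence you only get $\|\mathcal D_{k+1}-\mathcal D_k\|\lesssim \alpha+\|p_k\|$. This is $\mathcal O(\alpha)$ only if you already know $\|p_k\|=\mathcal O(\sqrt{\mu})$, for example from $\mathcal E_k=\mathcal O(\mu)$ via $D_{\phi^*}(0,\nabla f(x_k))\le f(x_k)-f(x^\star)$. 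That is circular unless you set up an induction you have not supplied.

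The paper does not sharpen Lemma~\ref{lem:chenge-D} at all. It sums the $\mathcal O(\mu R^2)$ per-step perturbation against the weights, using $\sum_{j=1}^{k+1}(1+\alpha)^{-j}\le 1/\alpha$. Its constant $C$ therefore implicitly contains a factor $1/\alpha$. For the statement as posed (with $\mu$ and $\alpha$ fixed) this is admissible, so you could simply drop your second step. Your observation that the resulting constant is not uniform in $\mu$ is correct, and it does matter for the homotopy theorem. But your proposed repair does not close that gap.
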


Then using homotopy argument, we can obtain Theorem \ref{thm:homotopy}. 

\paragraph{Discussion on Complexity}
Let $P^\star$ be the solution of the unregularized optimal transport problem \eqref{eq:OT}. The solution $P^\varepsilon$ of the $\varepsilon$-regularized optimal transport problem \eqref{eq:EOT} satisfies  \cite{peyre2019computational}
$$\langle C,P^\varepsilon\rangle-\langle C,P^\star\rangle\le \varepsilon \log nm,$$ where $P^\star$ is the unregularized solution. The suboptimality of the Acc-Sinkhorn iterate $P_k=P(u(v_k),v_k)$ is composed of two parts: optimization error $\langle C,P_k\rangle-\langle C,P^\varepsilon\rangle$ and regularization bias $\langle C, P^\varepsilon-P^\star\rangle$. More precisely,
$$\langle C,P_k\rangle-\langle C,P^\star\rangle = \langle C,P_k-P^\varepsilon\rangle + \langle C, P^\varepsilon-P^\star\rangle \le \|P_k-P^\varepsilon\|_1\|C\|_\infty + \varepsilon \log nm.$$
To reach accuracy $\tau$, we set $\varepsilon=O(\tau/\log nm)$ and run Acc-Sinkhorn until $\|P_k-P^\varepsilon\|_1 \lesssim \tau/\|C\|_\infty$. Because $\|P_k-P^\varepsilon\|_1 = \|P_k^\top 1-b\|_1 = \|\nabla f(v_k)\|_1$, the convergence rate of Acc-Sinkhorn shows this takes $O(\|C\|_\infty/\tau)$ iterations. The total complexity is $O(n^2/\tau)$, a large improvement over the $O(n^2/\tau^2)$ iterations of Sinkhorn. Since each iteration of Acc-Sinkhorn has a similar cost to Sinkhorn, the total complexity is also better.


\section{Numerical Experiments}
All experiments were run in MATLAB R2025b on an Apple M4 laptop with 24 GB of memory. Random seeds were fixed, and the code is publicly available.

\paragraph{Synthetic Datasets}
We first test Algorithm~\ref{alg:A2MD-homotopy} on synthetic datasets. We generate two discrete distributions $a\in\mathbb{R}^n_+$ and $b\in\mathbb{R}^m_+$ by sampling each entry independently from $[0,1]$ and then normalizing each vector to have unit sum. We generate the cost matrix $C$ in the same way, with entries sampled independently from $[0,1]$ and then rescaled so that $\sum_{i,j} C_{ij}=1$. We compare Acc-Sinkhorn with Sinkhorn for several values of $n$, $m$, and $\varepsilon$. 
\begin{figure}[htp]
\centering
\begin{minipage}[t]{0.35\linewidth}
\vspace{0pt}
Figure~\ref{fig:synthetic_combined} shows the error decay with respect to running time. The results show that Algorithm~\ref{alg:A2MD-homotopy} converges more than $10\times$ faster than Sinkhorn on these synthetic problems, while keeping a similar per-iteration running time. The method is also stable with respect to the choices of $\mu_0$ and $m_0$. These results support the practical efficiency of the proposed method for high-accuracy EOT computation.
\end{minipage}
\hfill
\begin{minipage}[t]{0.625\linewidth}
\vspace{0pt}
\centering
\begin{subfigure}[t]{0.5\linewidth}
    \centering
    \includegraphics[width=0.95\linewidth,height=4.15cm]{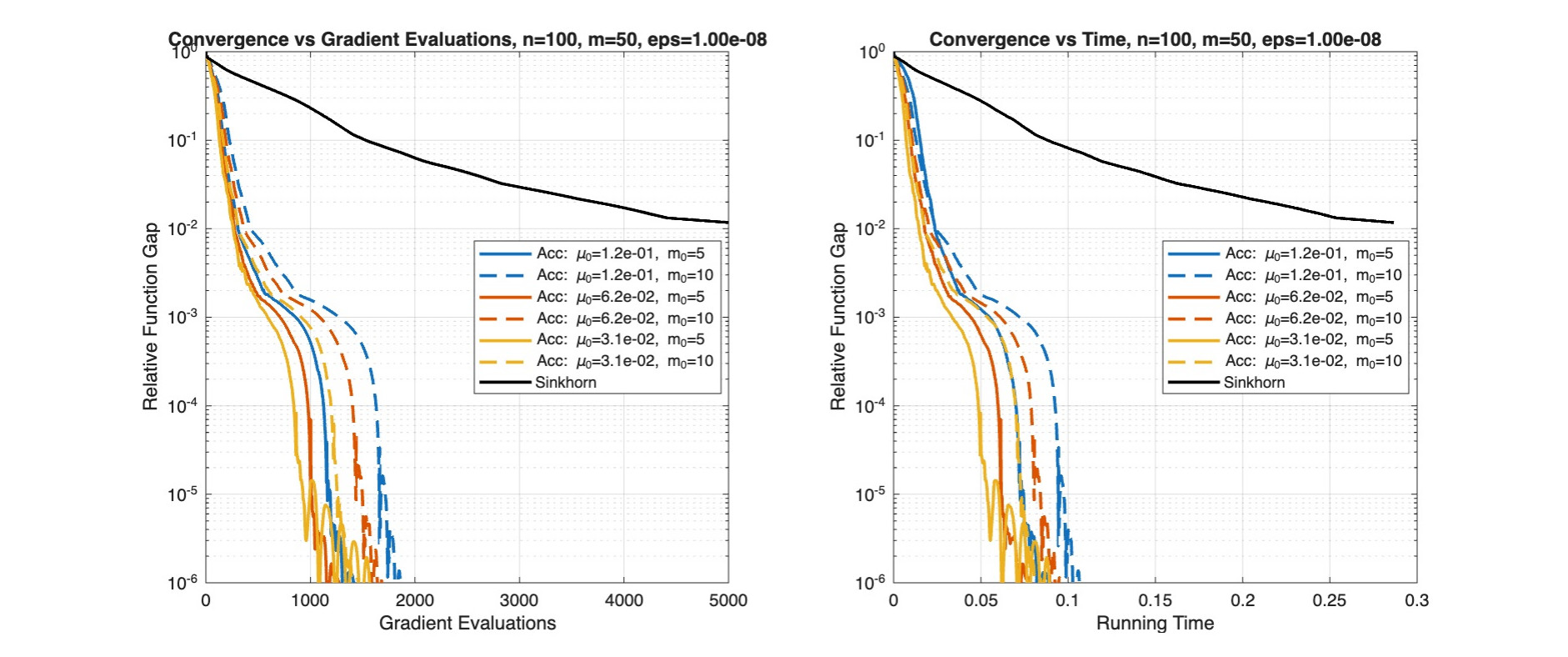}
    \caption{$n=100$, $m=50$, $\varepsilon=10^{-8}$.}
    \label{fig:synthetic1}
\end{subfigure}
\hfill
\begin{subfigure}[t]{0.475\linewidth}
    \centering
    \includegraphics[width=\linewidth,height=4.15cm]{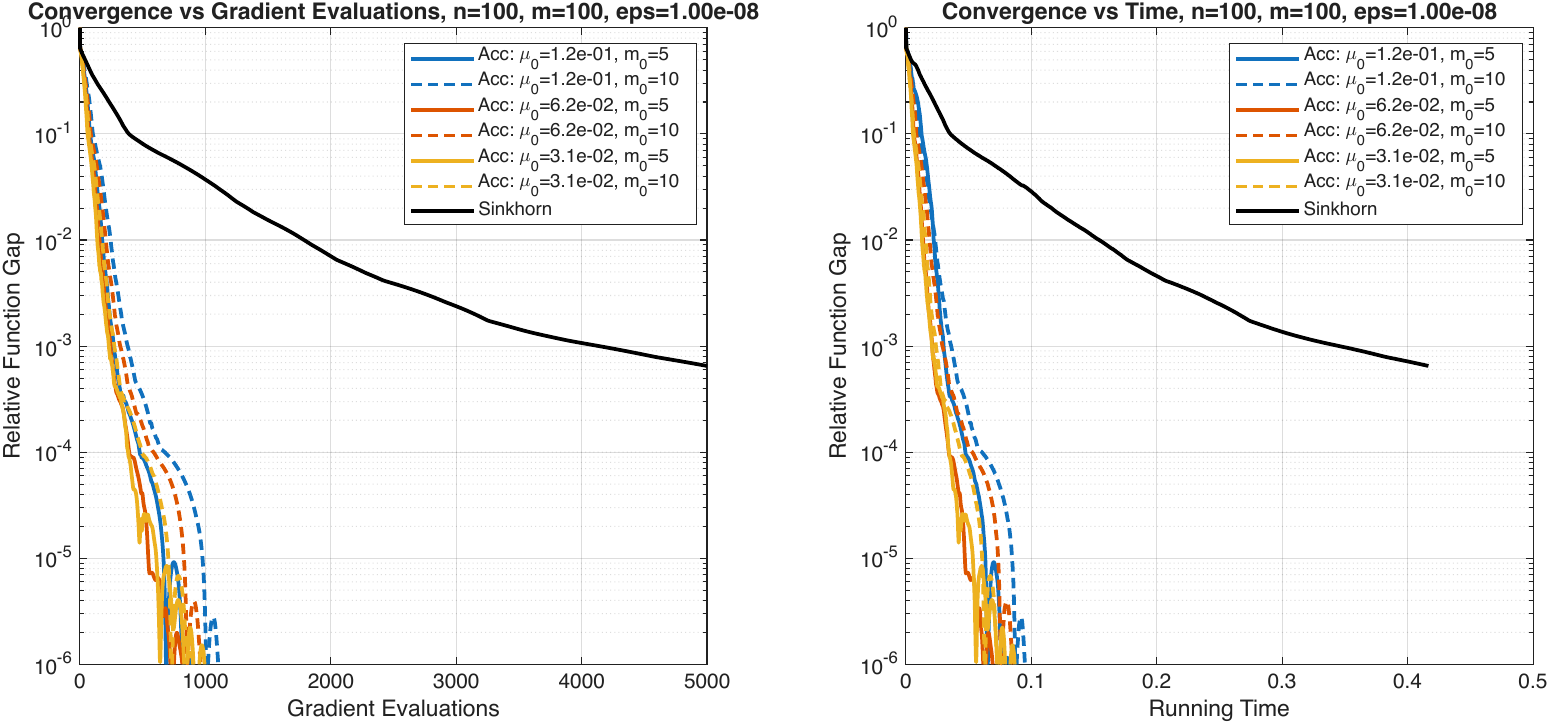}
    \caption{$n=m = 100, \varepsilon=10^{-8}$.}
    \label{fig:synthetic2}
\end{subfigure}
\end{minipage}
\caption{Synthetic dataset results.}
\label{fig:synthetic_combined}
\end{figure}




\paragraph{Color Transfer}

We evaluate the proposed Acc-Sinkhorn algorithm on a color transfer
task between two images, which serves as a canonical application of
optimal transport in image processing \citep{reinhard2001color,
ferradans2014regularized}.

We use two RGB images: a source image of size $1000 \times 669$ pixels
(providing the target color palette) and a content image of size
$1000 \times 750$ pixels (whose structure is to be preserved). From each
image we uniformly subsample $n = 1000$ pixels at random, yielding source
and target point clouds
$\{x_i\}_{i=1}^n$ and
$\{y_j\}_{j=1}^n$,
where each point represents an RGB color value normalized to $[0, 1]^3$.

The cost matrix is defined as the squared Euclidean distance in RGB color
space:
\begin{equation}
    C_{ij} = \| x_i - y_j \|_2^2, \quad i, j \in [n],
\end{equation}
normalized by its maximum entry so that $C \in [0,1]^{n \times n}$.
Both marginals are set to the uniform distribution:
$a_i = b_j = 1/n$.



We compare the proposed accelerated Sinkhorn algorithm against the standard Sinkhorn algorithm, both implemented in the log-domain for numerical stability \citep{chizat2016scaling}. All solvers are initialized at the zero dual variable $v_0=\mathbf{0}\in\mathbb{R}^{2n}$ and are terminated when the marginal violation satisfies
\begin{equation}\label{eq:l1tol}
\|P\mathbf{1}-a\|_1+\|P^\top\mathbf{1}-b\|_1<\tau.
\end{equation}
We use the $\ell_1$ norm because it gives a direct measure of total feasibility error in the row and column marginals. The threshold $\tau=2/n$ means that the average absolute error is about $1/n$ for each marginal vector, which matches the natural scale of the problem when the marginals are normalized probability vectors. 


Given the converged transport plan $P^*$, we use barycentric projection to assign a transferred color to each sampled target pixel. These colors are then propagated to all $750{,}000$ full-resolution target pixels by nearest-neighbor lookup, and the final image is clipped to $[0,1]^3$.

Table~\ref{tab:color_transfer} reports the solver statistics. For small regularization parameters, Acc-Sinkhorn is more than $10\times$ faster than Sinkhorn. Figure~\ref{fig:color_transfer} shows the transferred colors as $\varepsilon$ decreases. Smaller $\varepsilon$ gives a sharper transport plan and better matches the source palette, but it also makes Sinkhorn much slower. Acc-Sinkhorn computes this sharper regime more efficiently.

\begin{table*}[t]
\centering

\begin{minipage}[t]{0.45\textwidth}
\centering
\captionof{table}{Color transfer ($n = 1000$).}
\label{tab:color_transfer}

\setlength{\tabcolsep}{3pt}

\resizebox{0.9\linewidth}{!}{
\begin{tabular}{ccccccc}
\toprule
& & \multicolumn{2}{c}{Acc-Sinkhorn} & \multicolumn{2}{c}{Sinkhorn} \\
\cmidrule(lr){3-4} \cmidrule(lr){5-6}
$\varepsilon$ & Tol & It & Time & It & Time \\
\midrule
$1$       & 2e-3& 3& 0.037& 1& 0.013\\
$10^{-1}$ & 2e-3& 5& 0.063& 4& 0.049\\
$10^{-2}$ & 2e-3& 18& 0.217& 37& 0.436\\
$10^{-3}$ & 2e-3& 46& 0.539& 359& 4.232\\
$10^{-4}$ & 2e-3& 239& 3.244& 3507& 49.691\\
\bottomrule
\end{tabular}
}
\end{minipage}
\hfill
\begin{minipage}[t]{0.51\textwidth}
\centering
\captionof{table}{Word alignment (En–Fr, $n=500$).}
\label{tab:word_alignment}

\setlength{\tabcolsep}{3pt}

\resizebox{0.9\linewidth}{!}{
\begin{tabular}{ccccccc}
\toprule
& & \multicolumn{2}{c}{Acc-Sinkhorn} & \multicolumn{2}{c}{Sinkhorn} \\
\cmidrule(lr){3-4} \cmidrule(lr){5-6}
$\varepsilon$ & Top 1/5 & It & Time & It & Time \\
\midrule
$1$       & 76.7/94.0& 4& 0.013& 1& 0.003\\
$10^{-1}$ & 78.0/94.8& 6& 0.019& 4& 0.012\\
$10^{-2}$ & 82.8/95.3& 234& 0.638& 5085& 13.745\\
$10^{-3}$ & 81.9/95.7& 268& 0.756& 8254& 22.993\\
$10^{-4}$ & 82.8/94.8& 667& 1.494& 20320& 49.360\\
\bottomrule
\end{tabular}
}
\end{minipage}

\end{table*}


\begin{figure}[htbp]
    \centering

    \begin{subfigure}[b]{0.32\textwidth}
        \includegraphics[width=0.9\textwidth]{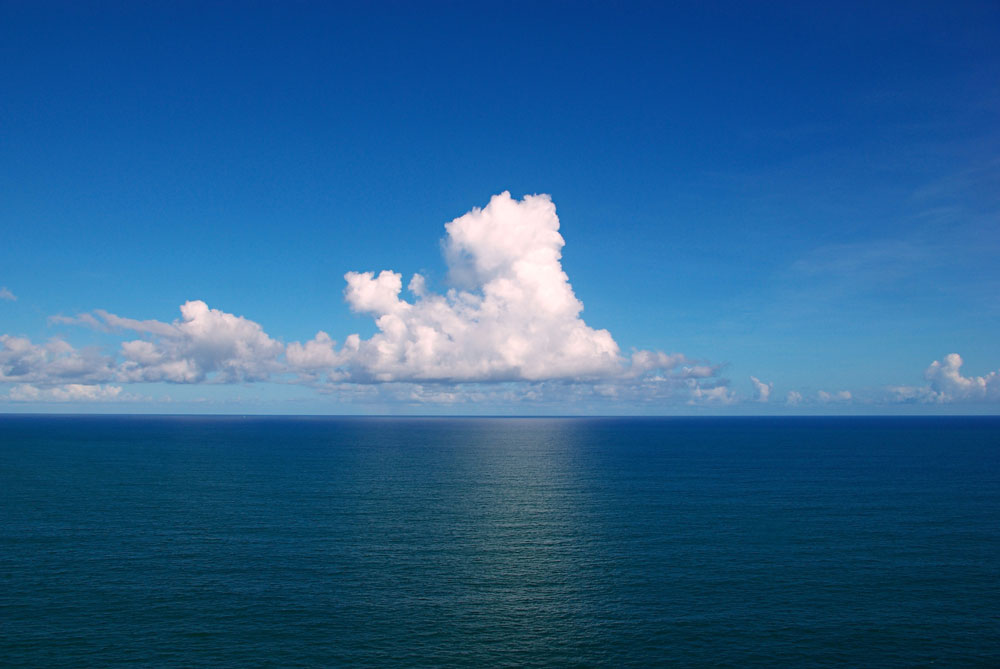}
        \caption{Source}
    \end{subfigure}
    \hfill
    \begin{subfigure}[b]{0.32\textwidth}
        \includegraphics[width=0.85\textwidth]{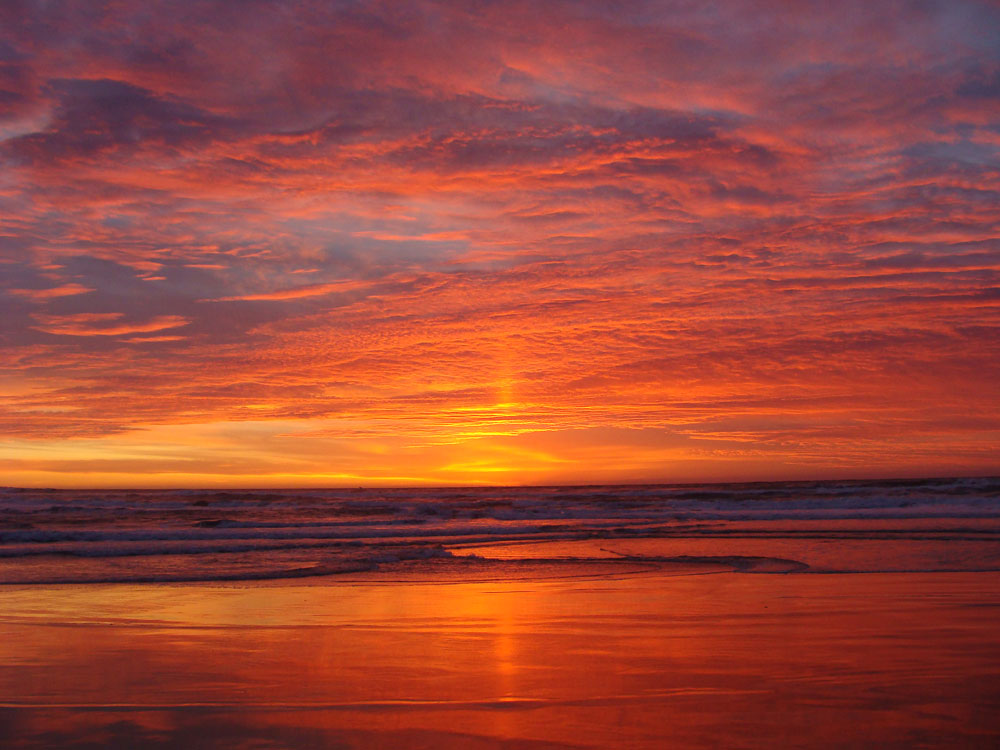}
        \caption{Target (content)}
    \end{subfigure}

    \vspace{0.5em}

    \begin{subfigure}[b]{0.18\textwidth}
        \includegraphics[width=\textwidth]{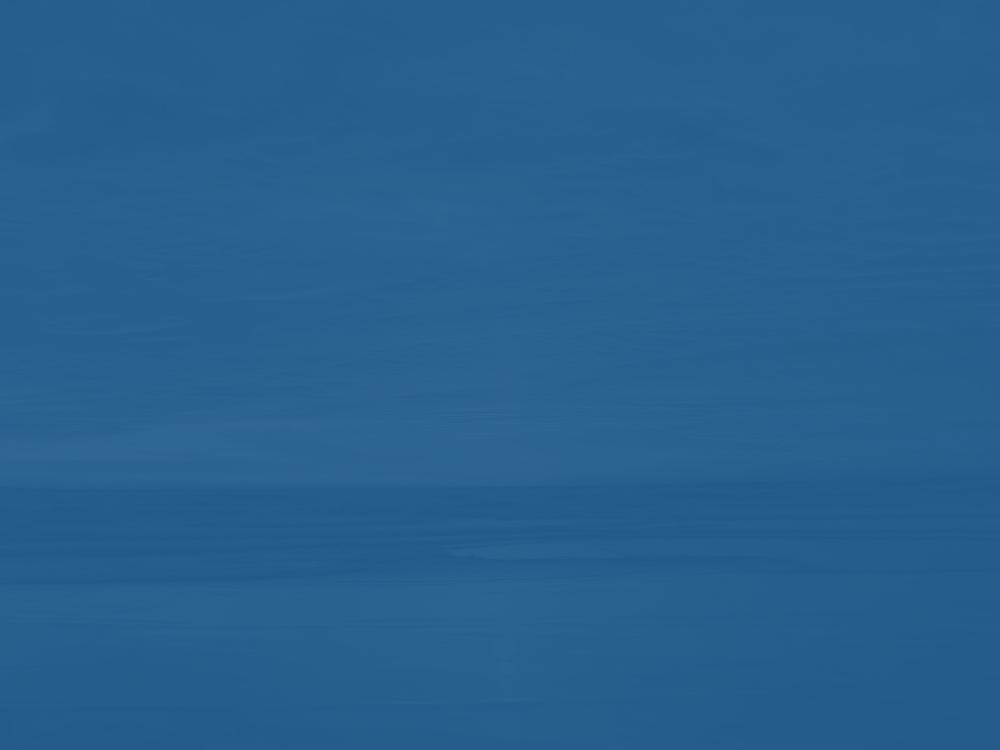}
        \caption{$\varepsilon = 1$}
    \end{subfigure}
    \hfill
    \begin{subfigure}[b]{0.18\textwidth}
        \includegraphics[width=\textwidth]{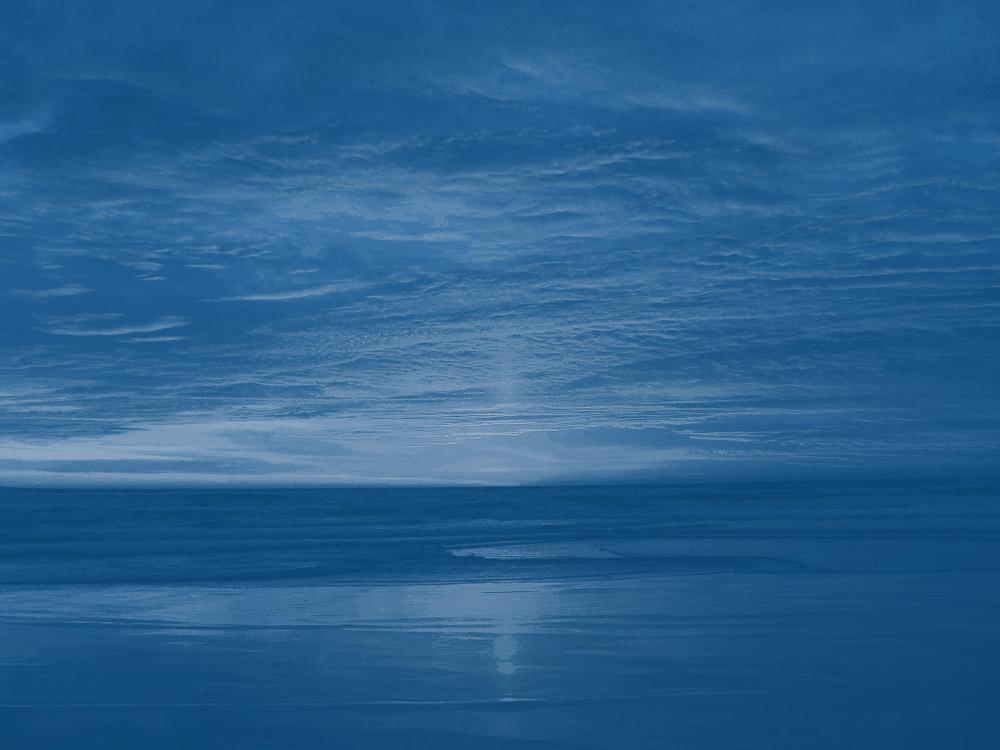}
        \caption{$\varepsilon = 10^{-1}$}
    \end{subfigure}
    \hfill
    \begin{subfigure}[b]{0.18\textwidth}
        \includegraphics[width=\textwidth]{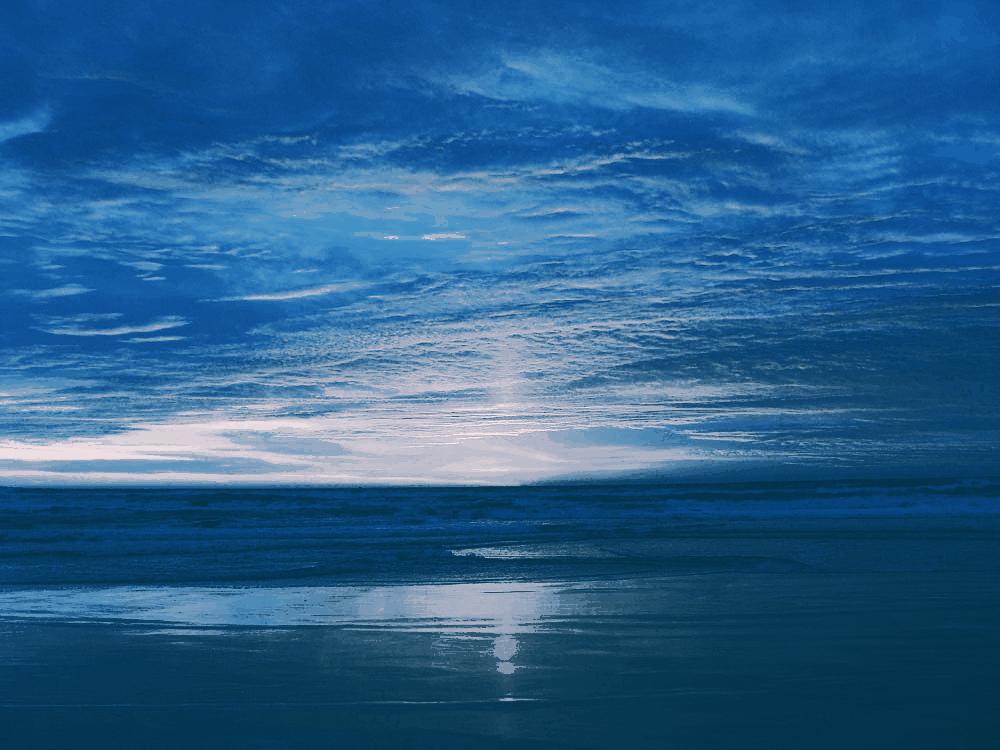}
        \caption{$\varepsilon = 10^{-2}$}
    \end{subfigure}
    \hfill
    \begin{subfigure}[b]{0.18\textwidth}
        \includegraphics[width=\textwidth]{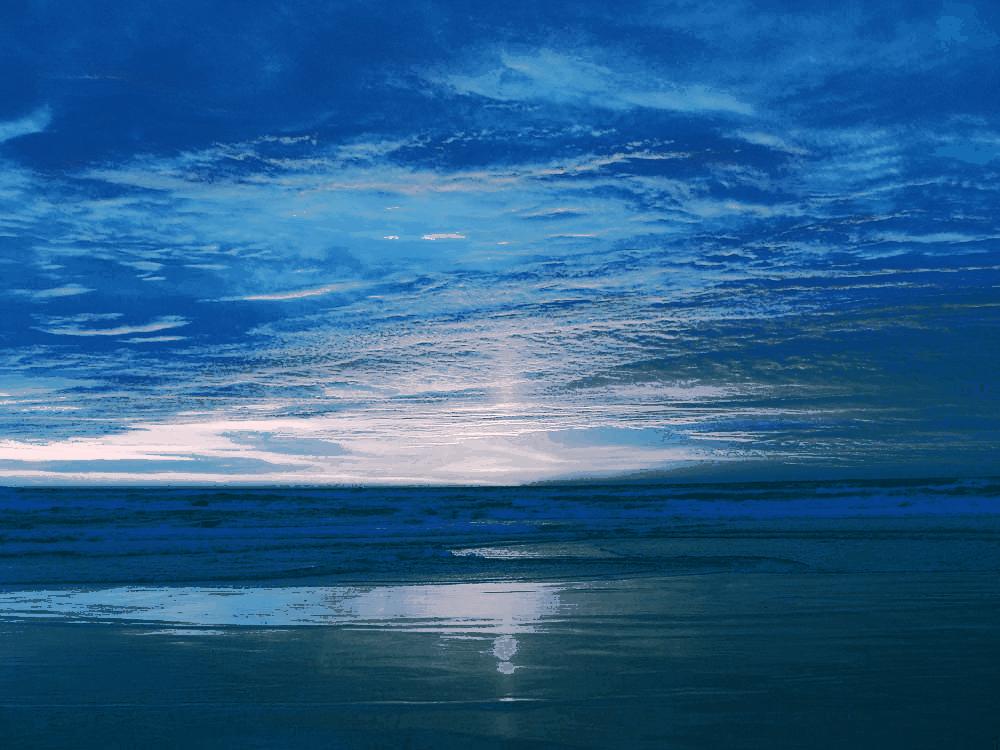}
        \caption{$\varepsilon = 10^{-3}$}
    \end{subfigure}
    \hfill
    \begin{subfigure}[b]{0.18\textwidth}
        \includegraphics[width=\textwidth]{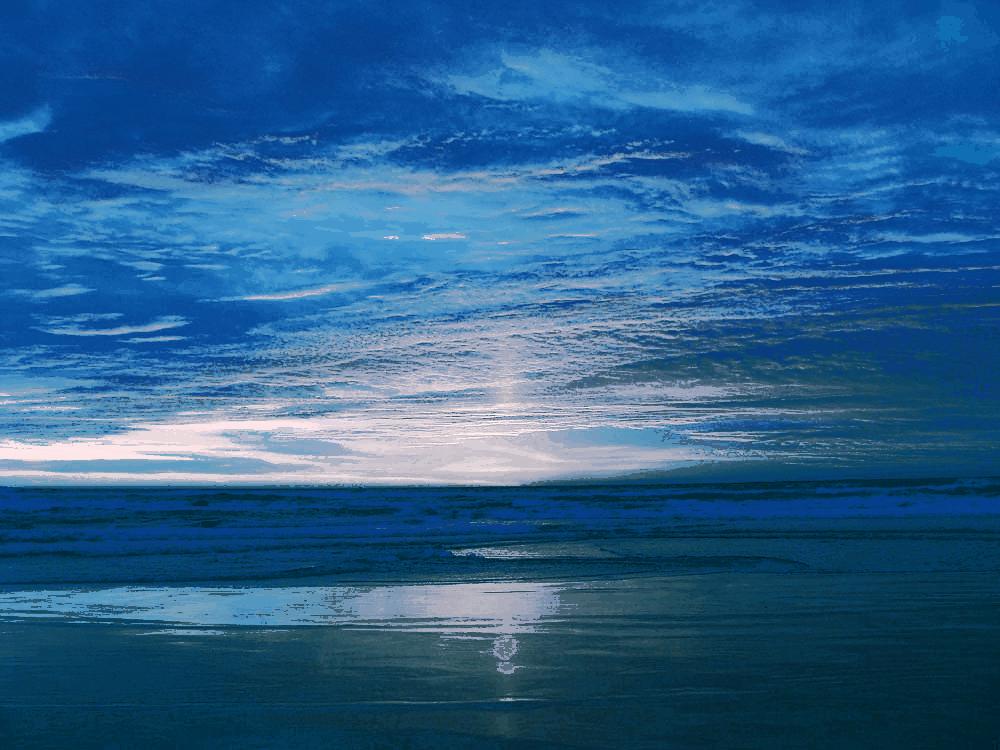}
        \caption{$\varepsilon = 10^{-4}$}
    \end{subfigure}

    \caption{Color transfer results for decreasing regularization parameter
    $\varepsilon$ by Acc-Sinkhorn. The source image (top left) provides the color palette and
    the target image (top right) provides the content. As $\varepsilon$
    decreases, the transport plan becomes sharper and the transferred colors
    more faithfully reproduce the source palette, at the cost of increased
    computation.}
    \label{fig:color_transfer}
\end{figure}

\paragraph{Word Embedding Alignment}
We test Acc-Sinkhorn on bilingual word embedding alignment, where a sharp, near-permutation transport plan is needed and small $\varepsilon$ is important.

We use the aligned multilingual word vectors of \citet{conneau2018word}, trained on Wikipedia with fastText \citep{bojanowski2017enriching}. We take the top $n=500$ English and French words, giving normalized embeddings $\{x_i\}_{i=1}^n,\{y_j\}_{j=1}^n\subset\mathbb{R}^{300}$. Ground-truth pairs are taken from the MUSE bilingual dictionary \citep{conneau2018word}; we keep only pairs that appear in both vocabularies and remove duplicates.

The cost is the cosine distance
\begin{equation}
C_{ij}=1-\langle x_i,y_j\rangle,\qquad i,j\in[n],
\end{equation}
with $C$ normalized to $[0,1]^{n\times n}$. Both marginals are uniform: $a_i=b_j=1/n$.

We compare Acc-Sinkhorn with Sinkhorn, both implemented in the log domain for numerical stability at small $\varepsilon$ \citep{chizat2016scaling}. All solvers start from $v_0=\mathbf{0}\in\mathbb{R}^{2n}$ and stop when the marginal violation \eqref{eq:l1tol} holds with $\tau=0.01\times 2/n$, corresponding to a $1\%$ average marginal error.

Given the converged plan $P^*$, we predict the French translation of English word $i$ by
\begin{equation}
\hat{j}(i)=\arg\max_{j\in[n]} P^*_{ij}.
\end{equation}
We report top-1 and top-5 accuracy over valid evaluation pairs.

Table~\ref{tab:word_alignment} reports accuracy and solver statistics. Acc-Sinkhorn is up to $30\times$ faster than Sinkhorn, with larger gains at smaller $\varepsilon$, where Sinkhorn requires many more iterations.

\section{Conclusion}
This paper presents a simple and efficient accelerated Sinkhorn algorithm for computing entropy-regularized optimal transport that maintains the per-iteration cost of Sinkhorn while achieving a $10\times$--$30\times$ speedup in the small $\varepsilon$ regime. Lyapunov convergence analysis proves the $\mathcal{O}(1/k^2)$ rate under a verifiable stability condition and suggests the overall complexity $\widetilde{\mathcal O}(n^2/\varepsilon)$, compared to $\mathcal{O}(1/k)$ and $\widetilde{\mathcal O}(n^2/\varepsilon^2)$ for Sinkhorn. Numerical experiments on synthetic and real datasets support the practical efficiency of the method. Removing the stability condition and proving the limit step size $\alpha=\sqrt{2\mu}$ directly remain important directions for future work.

\bibliography{references}

@article{maddison2021dual,
	author = {Maddison, Chris J and Paulin, Daniel and Teh, Yee Whye and Doucet, Arnaud},
	journal = {SIAM Journal on Optimization},
	number = {1},
	pages = {991--1016},
	publisher = {SIAM},
	title = {Dual space preconditioning for gradient descent},
	volume = {31},
	year = {2021}}

@inproceedings{conneau2018word,
	author = {Conneau, Alexis and Lample, Guillaume and Ranzato, Marc'Aurelio and Denoyer, Ludovic and J{\'e}gou, Herv{\'e}},
	booktitle = {International Conference on Learning Representations (ICLR)},
	title = {Word Translation Without Parallel Data},
	year = {2018}}

@article{bojanowski2017enriching,
	author = {Bojanowski, Piotr and Grave, Edouard and Joulin, Armand and Mikolov, Tomas},
	journal = {Transactions of the Association for Computational Linguistics},
	pages = {135--146},
	title = {Enriching Word Vectors with Subword Information},
	volume = {5},
	year = {2017}}

@article{cuturi2013sinkhorn,
  title={Sinkhorn distances: Lightspeed computation of optimal transport},
  author={Cuturi, Marco},
  journal={Advances in neural information processing systems},
  volume={26},
  year={2013}
}

@article{chizat2016scaling,
	author = {Chizat, L{\'e}na{\"\i}c and Peyr{\'e}, Gabriel and Schmitzer, Bernhard and Vialard, Fran{\c{c}}ois-Xavier},
	journal = {Mathematics of Computation},
	number = {314},
	pages = {2563--2609},
	title = {Scaling Algorithms for Unbalanced Optimal Transport Problems},
	volume = {87},
	year = {2018}}

@article{reinhard2001color,
	author = {Reinhard, Erik and Adhikhmin, Michael and Gooch, Bruce and Shirley, Peter},
	journal = {IEEE Computer Graphics and Applications},
	number = {5},
	pages = {34--41},
	title = {Color Transfer between Images},
	volume = {21},
	year = {2001}}

@article{ferradans2014regularized,
	author = {Ferradans, Sira and Papadakis, Nicolas and Peyr{\'e}, Gabriel and Aujol, Jean-Fran{\c{c}}ois},
	journal = {SIAM Journal on Imaging Sciences},
	number = {3},
	pages = {1853--1882},
	title = {Regularized Discrete Optimal Transport},
	volume = {7},
	year = {2014}}

@book{peyre2019computational,
	author = {Peyr{\'e}, Gabriel and Cuturi, Marco},
	number = {5-6},
	pages = {355--607},
	publisher = {Foundations and Trends in Machine Learning},
	title = {Computational Optimal Transport},
	volume = {11},
	year = {2019}}

@misc{chen2025hnagsuperfastacceleratedgradient,
	archiveprefix = {arXiv},
	author = {Long Chen and Zeyi Xu},
	eprint = {2510.16680},
	primaryclass = {math.OC},
	title = {HNAG++: A Super-Fast Accelerated Gradient Method for Strongly Convex Optimization},
	url = {https://arxiv.org/abs/2510.16680},
	year = {2025}}

@article{altschuler2017near,
  title={Near-linear time approximation algorithms for optimal transport via Sinkhorn iteration},
  author={Altschuler, Jason and Niles-Weed, Jonathan and Rigollet, Philippe},
  journal={Advances in neural information processing systems},
  volume={30},
  year={2017}
}

@inproceedings{dvurechensky2018computational,
  title={Computational optimal transport: Complexity by accelerated gradient descent is better than by Sinkhorn’s algorithm},
  author={Dvurechensky, Pavel and Gasnikov, Alexander and Kroshnin, Alexey},
  booktitle={International conference on machine learning},
  pages={1367--1376},
  year={2018},
  organization={PMLR}
}

@inproceedings{lin2019efficient,
  title={On efficient optimal transport: An analysis of greedy and accelerated mirror descent algorithms},
  author={Lin, Tianyi and Ho, Nhat and Jordan, Michael},
  booktitle={International conference on machine learning},
  pages={3982--3991},
  year={2019},
  organization={PMLR}
}

@article{thibault2021overrelaxed,
  title={Overrelaxed Sinkhorn-Knopp Algorithm for Regularized Optimal Transport},
  author={Alexis Thibault and L'enaic Chizat and Charles Dossal and Nicolas Papadakis},
  journal={Algorithms},
  year={2017},
  volume={14},
  pages={143},
  url={https://api.semanticscholar.org/CorpusID:53997178}
}

@article{chizat2024annealed,
  title={Annealed sinkhorn for optimal transport: convergence, regularization path and debiasing},
  author={Chizat, L{\'e}na{\"\i}c},
  journal={arXiv preprint arXiv:2408.11620},
  year={2024}
}

@article{sinkhorn1964,
	author = {Sinkhorn, Richard},
	journal = {The Annals of Mathematical Statistics},
	number = {2},
	pages = {876--879},
	title = {A Relationship Between Arbitrary Positive Matrices and Doubly Stochastic Matrices},
	volume = {35},
	year = {1964}}

@article{franklin1989scaling,
title = {On the scaling of multidimensional matrices},
journal = {Linear Algebra and its Applications},
volume = {114-115},
pages = {717-735},
year = {1989},
issn = {0024-3795},
doi = {https://doi.org/10.1016/0024-3795(89)90490-4},
url = {https://www.sciencedirect.com/science/article/pii/0024379589904904},
author = {Joel Franklin and Jens Lorenz}
}

@article{carlier2022linear,
author = {Carlier, Guillaume},
title = {On the Linear Convergence of the Multimarginal Sinkhorn Algorithm},
journal = {SIAM Journal on Optimization},
volume = {32},
number = {2},
pages = {786-794},
year = {2022},
doi = {10.1137/21M1410634},

URL = { 
    
        https://doi.org/10.1137/21M1410634
    
    

},
eprint = { 
    
        https://doi.org/10.1137/21M1410634
    
    

}
}

@article{ghosal2025convergence,
  title={On the convergence rate of Sinkhorn’s algorithm},
  author={Ghosal, Promit and Nutz, Marcel},
  journal={Mathematics of Operations Research},
  year={2025},
  publisher={INFORMS}
}

@article{chizat2026sharper,
  title={Sharper exponential convergence rates for Sinkhorn’s algorithm in continuous settings: L. Chizat et al.},
  author={Chizat, L{\'e}na{\"\i}c and Delalande, Alex and Va{\v{s}}kevi{\v{c}}ius, Tomas},
  journal={Mathematical Programming},
  volume={215},
  number={1},
  pages={809--858},
  year={2026},
  publisher={Springer}
}

@article{mishchenko2019sinkhorn,
  title={Sinkhorn algorithm as a special case of stochastic mirror descent},
  author={Mishchenko, Konstantin},
  journal={arXiv preprint arXiv:1909.06918},
  year={2019}
}

@article{genevay2016stochastic,
  title={Stochastic optimization for large-scale optimal transport},
  author={Genevay, Aude and Cuturi, Marco and Peyr{\'e}, Gabriel and Bach, Francis},
  journal={Advances in neural information processing systems},
  volume={29},
  year={2016}
}

@inproceedings{genevay2018learning,
  title={Learning generative models with sinkhorn divergences},
  author={Genevay, Aude and Peyr{\'e}, Gabriel and Cuturi, Marco},
  booktitle={International Conference on Artificial Intelligence and Statistics},
  pages={1608--1617},
  year={2018},
  organization={PMLR}
}

@article{courty2017joint,
	author = {Courty, Nicolas and Flamary, R{\'e}mi and Habrard, Amaury and Rakotomamonjy, Alain},
	journal = {Advances in Neural Information Processing Systems},
	title = {Joint Distribution Optimal Transportation for Domain Adaptation},
	volume = {30},
	year = {2017}}

@article{solomon2015convolutional,
author = {Solomon, Justin and de Goes, Fernando and Peyr\'{e}, Gabriel and Cuturi, Marco and Butscher, Adrian and Nguyen, Andy and Du, Tao and Guibas, Leonidas},
title = {Convolutional wasserstein distances: efficient optimal transportation on geometric domains},
year = {2015},
issue_date = {August 2015},
publisher = {Association for Computing Machinery},
address = {New York, NY, USA},
volume = {34},
number = {4},
issn = {0730-0301},
url = {https://doi.org/10.1145/2766963},
doi = {10.1145/2766963},
journal = {ACM Trans. Graph.},
month = jul,
articleno = {66},
numpages = {11}
}

@misc{chen2019orderoptimizationmethodsbased,
	archiveprefix = {arXiv},
	author = {Long Chen and Hao Luo},
	eprint = {1912.09276},
	primaryclass = {math.OC},
	title = {First order optimization methods based on Hessian-driven Nesterov accelerated gradient flow},
	url = {https://arxiv.org/abs/1912.09276},
	year = {2019}}

@article{aubin2022mirror,
  title={Mirror descent with relative smoothness in measure spaces, with application to Sinkhorn and EM},
  author={Aubin-Frankowski, Pierre-Cyril and Korba, Anna and L{\'e}ger, Flavien},
  journal={Advances in Neural Information Processing Systems},
  volume={35},
  pages={17263--17275},
  year={2022}
}

@inproceedings{alvarez2020unsupervised,
  title={Unsupervised hierarchy matching with optimal transport over hyperbolic spaces},
  author={Alvarez-Melis, David and Mroueh, Youssef and Jaakkola, Tommi},
  booktitle={International Conference on Artificial Intelligence and Statistics},
  pages={1606--1617},
  year={2020},
  organization={PMLR}
}

@inproceedings{xu2019learning,
  title={Learning with batch-wise optimal transport loss for 3D shape recognition},
  author={Xu, Lin and Sun, Han and Liu, Yuai},
  booktitle={Proceedings of the IEEE/CVF Conference on Computer Vision and Pattern Recognition},
  pages={3333--3342},
  year={2019}
}

@InProceedings{pmlr-v235-chopin24a,
  title = 	 {A connection between Tempering and Entropic Mirror Descent},
  author =       {Chopin, Nicolas and Crucinio, Francesca and Korba, Anna},
  booktitle = 	 {Proceedings of the 41st International Conference on Machine Learning},
  pages = 	 {8782--8800},
  year = 	 {2024},
  editor = 	 {Salakhutdinov, Ruslan and Kolter, Zico and Heller, Katherine and Weller, Adrian and Oliver, Nuria and Scarlett, Jonathan and Berkenkamp, Felix},
  volume = 	 {235},
  series = 	 {Proceedings of Machine Learning Research},
  month = 	 {21--27 Jul},
  publisher =    {PMLR},
  url = 	 {https://proceedings.mlr.press/v235/chopin24a.html}
}

@InProceedings{pmlr-v238-reza-karimi24a,
  title = 	 {Sinkhorn Flow as Mirror Flow: A Continuous-Time Framework for Generalizing the {S}inkhorn Algorithm},
  author =       {Reza Karimi, Mohammad and Hsieh, Ya-Ping and Krause, Andreas},
  booktitle = 	 {Proceedings of The 27th International Conference on Artificial Intelligence and Statistics},
  pages = 	 {4186--4194},
  year = 	 {2024},
  editor = 	 {Dasgupta, Sanjoy and Mandt, Stephan and Li, Yingzhen},
  volume = 	 {238},
  series = 	 {Proceedings of Machine Learning Research},
  month = 	 {02--04 May},
  publisher =    {PMLR},
  url = 	 {https://proceedings.mlr.press/v238/reza-karimi24a.html}
}

@article{leger2021gradient,
  title={A gradient descent perspective on Sinkhorn},
  author={L{\'e}ger, Flavien},
  journal={Applied Mathematics \& Optimization},
  volume={84},
  number={2},
  pages={1843--1855},
  year={2021},
  publisher={Springer}
}

@book{rockafellar1997convex,
  title={Convex analysis},
  author={Rockafellar, R Tyrrell},
  volume={28},
  year={1997},
  publisher={Princeton university press}
}
\bibliographystyle{plainnat}

\appendix
\section{Proofs for Section~\ref{sec:sinkhorn-dmd}}
\label{app:proof-Sinkhorn}
\subsection{Hessian of the reduced objective \texorpdfstring{$f$}{f}}
First, we have the following lemma that relates the Hessians of a convex function and its conjugate, which is a standard result in convex analysis; see for example \cite{rockafellar1997convex}.
\begin{lemma}[Inverse Hessian identity]
Let $f$ be $C^2$ and strictly convex, and let $v$ be such that $\xi=\nabla f(v)$ is well defined. Then
$$
\nabla^2 f^*(\xi)=\nabla^2 f(v)^{-1}.
$$
\end{lemma}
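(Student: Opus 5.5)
The plan is to prove the inverse Hessian identity from the fact that $\nabla f^*$ inverts $\nabla f$, followed by the inverse function theorem.

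\textbf{Step 1: $\nabla f^*$ inverts $\nabla f$.} Since $f$ is convex and differentiable, Fenchel--Young equality gives that $\xi=\nabla f(v)$ holds if and only if $v\in\partial f^*(\xi)$. Strict convexity of $f$ then makes this subgradient set a singleton near $\xi$: two distinct points $v_1\neq v_2$ with $\nabla f(v_1)=\nabla f(v_2)=\xi$ would both minimize the strictly convex function $f-\langle\xi,\cdot\rangle$, which is impossible. Hence $f^*$ is differentiable at every point of the image of $\nabla f$, with
$$
\nabla f^*(\nabla f(w))=w
$$
for all $w$ in a neighborhood of $v$.

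\textbf{Step 2: local diffeomorphism.} We use the (implicit) standing assumption that $\nabla^2 f(v)$ is nonsingular, i.e.\ positive definite. I would state this explicitly, since strict convexity alone does not imply it; in the paper's application the relevant Hessian is that of $f$ restricted to $N^\perp$, which is positive definite. The map $\nabla f$ is $C^1$ with invertible Jacobian $\nabla^2 f(v)$ at $v$. By the inverse function theorem it is a $C^1$ diffeomorphism from a neighborhood $U$ of $v$ onto an open neighborhood $V$ of $\xi$.

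\textbf{Step 3: identify the inverse and differentiate.} On $V$, the local inverse of $\nabla f$ coincides with $\nabla f^*$ by Step 1. Therefore $\nabla f^*$ is $C^1$ on $V$, so $f^*$ is $C^2$ there. Differentiating the identity $\nabla f^*(\nabla f(w))=w$ at $w=v$ with the chain rule gives
$$
\nabla^2 f^*(\xi)\,\nabla^2 f(v)=I,
$$
and hence $\nabla^2 f^*(\xi)=\nabla^2 f(v)^{-1}$.

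\textbf{Main obstacle.} The only delicate point is the justification in Step 2. If $\nabla^2 f(v)$ were singular, as happens for $f$ on all of $\mathbb{R}^m$ because of the shift invariance, the identity would have to be read on $N^\perp$, with $\nabla^2 f(v)^{-1}$ replaced by a pseudo-inverse. I would add a remark to that effect.
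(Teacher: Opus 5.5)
Your proof is correct, and it supplies more than the paper does. The paper gives no argument for this lemma; it only calls the identity standard and cites Rockafellar. Your route (Fenchel--Young to show that $\nabla f^*$ inverts $\nabla f$, then the inverse function theorem and the chain rule) is the standard proof written out.

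There are two small points in Step 1. First, going from ``$\partial f^*(\eta)$ is a singleton'' to ``$f^*$ is differentiable at $\eta$'' is itself a theorem (Rockafellar, Theorem~25.1), so you should either cite it or bypass it. To bypass it, let $G$ be the local inverse of $\nabla f$ on $V$ from Step 2. For $\eta\in V$, the point $G(\eta)$ is a critical point of the concave map $w\mapsto\langle\eta,w\rangle-f(w)$, hence its maximizer, so $f^*(\eta)=\langle\eta,G(\eta)\rangle-f(G(\eta))$ on the open set $V$. Differentiating gives $\nabla f^*(\eta)=G(\eta)+DG(\eta)^\top\bigl(\eta-\nabla f(G(\eta))\bigr)=G(\eta)$ directly. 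Second, the identity $\nabla f^*(\nabla f(w))=w$ holds for every $w$, not only near $v$. The phrase ``singleton near $\xi$'' should read ``singleton at every point of the range of $\nabla f$.''

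Your added hypothesis that $\nabla^2 f(v)$ is nonsingular is genuinely needed, and the statement as written omits it. For example, $f(v)=v^4$ is $C^2$ and strictly convex, yet $f''(0)=0$, and $f^*(\xi)=3|\xi/4|^{4/3}$ is not twice differentiable at $\xi=0$.

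Your closing remark also fits the paper's setting. The proof of the Polyak--{\L}ojasiewicz lemma in the appendix shows $\ker\nabla^2 f(v)=\mathrm{span}\{\mathbf 1\}$. So the lemma can only be applied to $f$ restricted to $N^\perp$, with $\nabla^2 f(v)^{-1}$ read as the pseudo-inverse on $N^\perp$.
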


The following lemma gives the closed-form expression of $\nabla^2 f$.

\begin{lemma}[Hessian of $f$]
The Hessian of $f$ is given by the Schur-complement formula
\begin{equation}
\nabla^2 f(v) = \mathrm{diag}(c_P) - P^\top \mathrm{diag}(r_P)^{-1} P.
\end{equation}
\end{lemma}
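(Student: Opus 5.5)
The plan is to compute $\nabla^2 f$ by differentiating the gradient formula \eqref{eq:grad-f}, $\nabla f(v)=c_P(u(v),v)-b$, using the chain rule together with implicit differentiation of the inner optimality condition that defines $u(v)$. Throughout, $P=P(u(v),v)$ with entries $P_{ij}=\exp(u_i(v)+v_j-C_{ij})$. Hence
\[
c_P(u,v)_j=\sum_i \exp(u_i+v_j-C_{ij}),\qquad r_P(u,v)_i=\sum_j \exp(u_i+v_j-C_{ij}).
\]

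First, I would record the partial Jacobians of these marginal maps, reading them off from \eqref{eq:P-from-dual}:
\[
\partial_v c_P=\mathrm{diag}(c_P),\qquad \partial_u c_P=P^\top,\qquad \partial_u r_P=\mathrm{diag}(r_P),\qquad \partial_v r_P=P.
\]
These are the blocks of $\nabla^2 F(u,v)$ via \eqref{eq:grad-F}.

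Second, I would differentiate the identity $r_P(u(v),v)=a$, which holds for all $v$ by the closed-form $u$-update. Implicit differentiation gives $\mathrm{diag}(r_P)\,\partial_v u(v)+P=0$. Because $r_P=a>0$ (or, more generally, because every $P_{ij}>0$), the diagonal matrix $\mathrm{diag}(r_P)$ is invertible. Therefore $\partial_v u(v)=-\mathrm{diag}(r_P)^{-1}P$. Alternatively, I would differentiate the explicit formula $u_i=\log a_i-\log(K\exp(v))_i$ directly; this yields the same Jacobian with no appeal to the implicit function theorem.

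Third, I would apply the chain rule to $v\mapsto c_P(u(v),v)$:
\[
\nabla^2 f(v)=\partial_v c_P+\partial_u c_P\,\partial_v u(v)=\mathrm{diag}(c_P)-P^\top\mathrm{diag}(r_P)^{-1}P.
\]
This is exactly the Schur complement of the $u$-block of $\nabla^2 F$, as expected for partial minimization.

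There is no serious obstacle here. The only point needing care is the orientation of the Jacobians, that is, which of $P$ and $P^\top$ appears in which block, and the invertibility of $\mathrm{diag}(r_P)$, which is immediate from positivity of $P$.
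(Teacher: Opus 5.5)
Your proposal is correct and follows the paper's proof essentially step for step: you differentiate $\nabla f(v)=c_P(u(v),v)-b$ by the chain rule, use the partial Jacobians $\partial_u c_P=P^\top$, $\partial_v c_P=\mathrm{diag}(c_P)$, $\partial_u r_P=\mathrm{diag}(r_P)$, $\partial_v r_P=P$, and get $\partial_v u=-\mathrm{diag}(r_P)^{-1}P$ by implicitly differentiating $r_P(u(v),v)=a$. Your alternative of differentiating the closed form $u_i=\log a_i-\log(K\exp(v))_i$ directly is a harmless, slightly more elementary variant of the same computation.
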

\begin{proof}
First, \eqref{eq:grad-f} gives $\nabla f(v)=c_P-b$. Differentiating with respect to $v$ gives
\begin{equation}\label{eq:hessianf}
\nabla^2 f(v)=\frac{d}{dv}\,c_P(u(v),v)=\frac{\partial c_P}{\partial u}\,\frac{du}{dv}+\frac{\partial c_P}{\partial v}.
\end{equation}

Since $P_{ij}=\exp(u_i+v_j-C_{ij})$, we have
$$
\frac{\partial P_{ij}}{\partial u_{i'}}=\delta_{ii'}P_{ij},\qquad
\frac{\partial P_{ij}}{\partial v_{j'}}=\delta_{jj'}P_{ij}.
$$
Therefore, for $r_P\in\mathbb{R}^n$ and $c_P\in\mathbb{R}^m$,
$$
\frac{\partial r_P}{\partial u}=\mathrm{diag}(r_P),\qquad
\frac{\partial r_P}{\partial v}=P,
$$
and
$$
\frac{\partial c_P}{\partial u}=P^\top,\qquad
\frac{\partial c_P}{\partial v}=\mathrm{diag}(c_P).
$$

By the implicit function theorem, differentiating $r_P(u(v),v)=a$ with respect to $v$ gives
$$
\frac{\dd u}{\dd v}=-\left(\frac{\partial r_P}{\partial u}\right)^{-1}\left(\frac{\partial r_P}{\partial v}\right)=-\mathrm{diag}(r_P)^{-1}P.
$$

Substituting into \eqref{eq:hessianf} yields
$$
\nabla^2 f(v)=P^\top\bigl(-\mathrm{diag}(r_P)^{-1}P\bigr)+\mathrm{diag}(c_P)
=\mathrm{diag}(c_P)-P^\top \mathrm{diag}(r_P)^{-1}P,
$$
which is the claimed formula.
\end{proof}

\subsection{Sublinear Convergence rate of the Sinkhorn algorithm}
In this section, we show that the dual relative smoothness inequality holds with constant $L=1$, which gives a sublinear convergence rate of $\mathcal{O}(1/k)$ for the original Sinkhorn iteration. 
\begin{lemma}[$1$-smoothness]
Let $v_k$ be the sequence generated by the Sinkhorn iteration \eqref{eq:md-general}. Then $f$ is dual relatively smooth with respect to $\phi$ with constant $L=1$.
\end{lemma}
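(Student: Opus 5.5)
We have to show $D_{\phi^*}(\xi,\eta)\le D_{f^*}(\xi,\eta)$ for all admissible $\xi,\eta$ in the range of $\nabla f$. The route goes through a pointwise comparison of Hessians, which is then integrated along a segment.

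\emph{Step 1: compare $\nabla^2 f(v)$ and $\mathrm{diag}(c_P)$.} By the Hessian lemma, $\nabla^2 f(v)=\mathrm{diag}(c_P)-P^\top\mathrm{diag}(r_P)^{-1}P$. The subtracted term is positive semidefinite, since for any $w$,
$$
w^\top P^\top\mathrm{diag}(r_P)^{-1}Pw=\sum_i (Pw)_i^2/r_{P,i}\ge 0 .
$$
Hence $\nabla^2 f(v)\preceq \mathrm{diag}(c_P)$.

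The reverse direction, $\nabla^2 f\succeq 0$, follows from Cauchy--Schwarz. We have $(Pw)_i^2\le r_{P,i}\sum_j P_{ij}w_j^2$, so the subtracted quadratic form is at most $\sum_j c_{P,j}w_j^2$.

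\emph{Step 2: invert the order.} The Hessian $\nabla^2 f$ is singular along $\mathbf 1$, because $f$ is shift invariant. We therefore work on $N^\perp$, where $f$ is strictly convex; alternatively we can use generalized inverses. Matrix inversion is order reversing on positive definite matrices, so Step 1 gives
$$
\nabla^2\phi^*(\nabla f(v))=\mathrm{diag}(c_P)^{-1}\preceq \nabla^2 f(v)^{-1}.
$$
Since $b+\nabla f(v)=c_P$, the left side is $\nabla^2\phi^*$ evaluated at $\nabla f(v)$. By the inverse Hessian identity, the right side equals $\nabla^2 f^*(\nabla f(v))$. This yields the pointwise inequality
$$
\nabla^2\phi^*(\xi)\preceq\nabla^2 f^*(\xi)\qquad\text{for every }\xi\text{ in the range of }\nabla f.
$$

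\emph{Step 3: integrate.} Write each Bregman divergence in integral form,
$$
D_h(\xi,\eta)=\int_0^1(1-t)\,(\xi-\eta)^\top\nabla^2 h\bigl(\eta+t(\xi-\eta)\bigr)(\xi-\eta)\dd t ,
$$
and apply the pointwise inequality along the segment. This requires the segment to stay in the set of admissible $\xi$. That set is $\{c-b: c=c_P\text{ for some }v\}$, i.e.\ positive vectors $c$ with $\mathbf 1^\top c=1$, which is convex. With the inequality $D_{\phi^*}\le D_{f^*}$ established, the $L=1$ descent lemma of \citet{maddison2021dual} applies to the unit-step iteration \eqref{eq:md-general}.

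The \textbf{main obstacle} is Step 2 together with the surjectivity behind Step 3. Inverting requires positive definiteness, so we must handle the kernel direction $\mathbf 1$. We must also justify that every point $c$ in that convex set is actually attained by some $c_P(u(v),v)$. The plan is to use the standard scaling result: a positive kernel can be scaled to any positive marginals $(a,c)$. This makes $\nabla f$ a bijection from $N^\perp$ onto $\{\xi:\mathbf 1^\top\xi=0,\ b+\xi>0\}$. On this set the inverse Hessian identity holds after restricting to $N^\perp$.
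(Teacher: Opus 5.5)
Your proof follows the paper's route. You bound $\nabla^2 f(v)\preceq\mathrm{diag}(c_P)$ via the Schur complement, reverse the order by inverting, and identify $\mathrm{diag}(c_P)^{-1}=\nabla^2\phi^*(\nabla f(v))$ (from $b+\nabla f(v)=c_P$) and $\nabla^2 f(v)^{-1}=\nabla^2 f^*(\nabla f(v))$. You go further than the paper, which stops at the pointwise Hessian inequality and inverts the singular matrix $\nabla^2 f(v)$ without comment. Your Step 3 supplies what is needed to pass from Hessians to $D_{\phi^*}\le D_{f^*}$: integration along segments of the convex set $\{\xi:\mathbf 1^\top\xi=0,\ b+\xi>0\}$, together with surjectivity of $\nabla f$ onto it by the scaling theorem.

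One step does not work as you justify it: the handling of the kernel in Step 2. Suppose you restrict to $N^\perp$ first (compressing with an orthonormal basis $Q$ of $N^\perp$) and then use order reversal for positive definite matrices. You get $(Q^\top\mathrm{diag}(c_P)Q)^{-1}\preceq(Q^\top\nabla^2 f(v)Q)^{-1}$. But the form entering $D_{\phi^*}(\xi,\eta)$ for $\xi-\eta\in N^\perp$ is the compression of the full inverse, $Q^\top\mathrm{diag}(c_P)^{-1}Q$. In general $(Q^\top\mathrm{diag}(c_P)Q)^{-1}\preceq Q^\top\mathrm{diag}(c_P)^{-1}Q$, which is the wrong direction to chain. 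For example, take $m=2$, $\nabla^2 f(v)=\lambda(e_1-e_2)(e_1-e_2)^\top$, and write $c_1,c_2$ for the entries of $c_P$. The compressed comparison only gives $\lambda\le(c_1+c_2)/4$, whereas you need $\lambda\le c_1c_2/(c_1+c_2)$.

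The fix is to use $\nabla^2 f(v)\preceq\mathrm{diag}(c_P)$ on all of $\mathbb R^m$, including the $\mathbf 1$ direction. For $\mathbf 1^\top\zeta=0$, which is exactly the range of $\nabla^2 f(v)$ since its kernel is $\mathrm{span}\{\mathbf 1\}$,
$$
\zeta^\top\nabla^2 f(v)^{+}\zeta=\sup_{w\in\mathbb R^m}\bigl\{2\langle\zeta,w\rangle-w^\top\nabla^2 f(v)\,w\bigr\}\ge\sup_{w\in\mathbb R^m}\bigl\{2\langle\zeta,w\rangle-w^\top\mathrm{diag}(c_P)\,w\bigr\}=\zeta^\top\mathrm{diag}(c_P)^{-1}\zeta .
$$
Equivalently, invert $\nabla^2 f(v)+\epsilon I\preceq\mathrm{diag}(c_P)+\epsilon I$ and let $\epsilon\to0$. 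On the hyperplane $\{\mathbf 1^\top\xi=0\}$, the Hessian of $f^*$ is the pseudoinverse $\nabla^2 f(v)^{+}$ restricted there. So this is your generalized-inverse option made precise, and with it the rest of your proof goes through.
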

\begin{proof}
Since $\nabla \phi(v)=b.\!*\,\exp(v)-b$, differentiating componentwise gives
$$
\nabla^2 \phi(v)=\mathrm{diag}\bigl(b.\!*\,\exp(v)\bigr).
$$
Using $\nabla^2 \phi^*(\eta)=\nabla^2 \phi\bigl(\nabla \phi^*(\eta)\bigr)^{-1}$ and $\nabla \phi^*(\eta)=\log(\mathbf{1}+\eta./b)$, we obtain
$$
\nabla^2 \phi^*(\eta)
=\mathrm{diag}\!\Bigl(b.\!*\,\exp\bigl(\log(\mathbf{1}+\eta./b)\bigr)\Bigr)^{-1}
=\mathrm{diag}(b+\eta)^{-1},
\qquad \eta>-b.
$$

Let $\xi=\nabla f(v)=c_P-b$. Then $b+\xi=c_P$, and hence
$$
\nabla^2 \phi^*(\xi)=\mathrm{diag}(c_P)^{-1}.
$$

On the other hand,
$$
\nabla^2 f^*(\xi)=\nabla^2 f(v)^{-1},
\qquad
\nabla^2 f(v)=\mathrm{diag}(c_P)-P^\top \mathrm{diag}(r_P)^{-1}P.
$$
Since $P^\top \mathrm{diag}(r_P)^{-1}P\succeq 0$, we have 
$$
\nabla^2 \phi^*(\nabla f(v))=\mathrm{diag}(c_P)^{-1}\preceq \nabla^2 f(v)^{-1}=\nabla^2 f^*(\nabla f(v)),
$$
so the dual relative smoothness inequality holds with $L=1$.
\end{proof}

\begin{theorem}[Theorem 3.9, \cite{maddison2021dual}]
Let $f,\phi^* : \mathbb{R}^d \to \mathbb{R}\cup\{\infty\}$ satisfy dual relative smoothness with constant $L$. If $x_0 \in \operatorname{int}(\operatorname{dom} f)$, then for all $i>0$ the iterates of Algorithm~1.1 satisfy
\begin{equation}
\phi^*(\nabla f(x_i))-\phi^*(0)
\le
\frac{L}{i}\bigl(f(x_0)-f(x_{\min})\bigr).
\end{equation}
\end{theorem}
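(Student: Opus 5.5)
We argue in the dual variables $\xi_i:=\nabla f(x_i)$, using $\nabla f^*(\xi_i)=x_i$. We interpret Algorithm~1.1 as the preconditioned step $x_{i+1}=x_i-\tfrac1L\nabla\phi^*(\xi_i)$, and dual relative smoothness as convexity of $Lf^*-\phi^*$, i.e.\ $D_{\phi^*}(\xi,\eta)\le L\,D_{f^*}(\xi,\eta)$. As in the Sinkhorn setting, we normalize $\phi^*$ so that $\nabla\phi^*(0)=0$, so $0$ minimizes $\phi^*$. The plan has two ingredients: a per-step decrease of $f$ by at least $\tfrac1L(\phi^*(\xi_{i+1})-\phi^*(0))$, and monotonicity of $i\mapsto\phi^*(\xi_i)$. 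Telescoping then gives the bound.

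\textbf{Step 1 (descent identity via conjugates).} Write $f(x)=\langle\xi,x\rangle-f^*(\xi)$ at $x=x_i$ and at $x=x_{i+1}$. A direct rearrangement gives
\begin{equation*}
f(x_i)-f(x_{i+1})=D_{f^*}(\xi_{i+1},\xi_i)+\langle \xi_{i+1},x_i-x_{i+1}\rangle
=D_{f^*}(\xi_{i+1},\xi_i)+\tfrac1L\langle\xi_{i+1},\nabla\phi^*(\xi_i)\rangle .
\end{equation*}
Dual relative smoothness bounds the first term below by $\tfrac1L D_{\phi^*}(\xi_{i+1},\xi_i)$. Expanding this Bregman divergence and combining it with the inner product, the cross terms collapse to
\begin{equation*}
\phi^*(\xi_{i+1})-\phi^*(\xi_i)+\langle\nabla\phi^*(\xi_i),\xi_i\rangle
=\phi^*(\xi_{i+1})-\phi^*(0)+D_{\phi^*}(0,\xi_i),
\end{equation*}
where the equality uses $\nabla\phi^*(0)=0$. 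Dropping $D_{\phi^*}(0,\xi_i)\ge0$ yields
\begin{equation*}
f(x_i)-f(x_{i+1})\ge\tfrac1L\bigl(\phi^*(\xi_{i+1})-\phi^*(0)\bigr)\ge 0 .
\end{equation*}

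\textbf{Step 2 (monotonicity of $\phi^*(\xi_i)$).} This is the step I expect to be the real obstacle, since Step~1 alone only controls an average over iterates. I would use that $\nabla(Lf^*-\phi^*)$ is monotone, tested at the pair $\xi_i,\xi_{i+1}$. Since $L(\nabla f^*(\xi_i)-\nabla f^*(\xi_{i+1}))=L(x_i-x_{i+1})=\nabla\phi^*(\xi_i)$, the $\nabla\phi^*(\xi_i)$ terms cancel and we are left with
\begin{equation*}
\langle\nabla\phi^*(\xi_{i+1}),\xi_i-\xi_{i+1}\rangle\ge0 .
\end{equation*}
Convexity of $\phi^*$ then gives
\begin{equation*}
\phi^*(\xi_i)-\phi^*(\xi_{i+1})\ge D_{\phi^*}(\xi_i,\xi_{i+1})+\langle\nabla\phi^*(\xi_{i+1}),\xi_i-\xi_{i+1}\rangle\ge0,
\end{equation*}
so $\phi^*(\xi_i)$ is nonincreasing in $i$.

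\textbf{Step 3 (telescoping).} Sum the inequality of Step~1 over $j=0,\dots,i-1$ and use $f(x_i)\ge f(x_{\min})$ to get
\begin{equation*}
\sum_{j=1}^{i}\bigl(\phi^*(\xi_j)-\phi^*(0)\bigr)\le L\bigl(f(x_0)-f(x_{\min})\bigr).
\end{equation*}
By Step~2 every summand is at least $\phi^*(\xi_i)-\phi^*(0)$, which gives the claimed $L/i$ bound. The remaining technical checks are routine: the iterates must stay in $\operatorname{int}(\operatorname{dom} f)$, and the $\xi_i$ must stay in the domain of $\nabla\phi^*$. For Sinkhorn the latter means $\xi_i>-b$, which holds automatically because $b+\xi_i=c_P>0$.
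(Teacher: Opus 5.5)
Your proof is correct. The paper does not prove this statement itself; it quotes it from \cite{maddison2021dual}. The only overlapping argument in the paper is Lemma~\ref{lem:sinkhorn-descent}, which is exactly your Step~1 at $L=1$ and is derived by the same computation: rewrite $f(x_i)-f(x_{i+1})$ through $D_{f^*}(\xi_{i+1},\xi_i)$, apply the three-point identity, then dual relative smoothness. That lemma keeps both nonnegative terms $D_{\phi^*}(0,\xi_i)$ and $D_{\phi^*}(\xi_{i+1},0)=\phi^*(\xi_{i+1})-\phi^*(0)$.

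The paper only ever combines that inequality with a Polyak--{\L}ojasiewicz bound (Lemma~\ref{prop:PL}, Theorem~\ref{thm:sinkhorn-linear-conv}). That argument uses the term $D_{\phi^*}(0,\xi_i)$, which you discard, and needs no monotonicity. For a last-iterate $L/i$ rate without strong convexity, the extra ingredient is your Step~2. Symmetrizing dual relative smoothness at $(\xi_i,\xi_{i+1})$ and using $L(x_i-x_{i+1})=\nabla\phi^*(\xi_i)$ gives $\phi^*(\xi_{i+1})\le\phi^*(\xi_i)$. That monotonicity turns the summed bound from Step~1 into the bound on $\phi^*(\xi_i)-\phi^*(0)$. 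So your write-up supplies exactly the part the paper delegates to the citation.

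Two minor remarks.

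\emph{Normalization.} The identity closing Step~1 is purely algebraic. The normalization $\nabla\phi^*(0)=0$ is needed only to make $\phi^*(\xi_{i+1})-\phi^*(0)\ge 0$, i.e.\ to make the final bound meaningful.

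\emph{The Sinkhorn case.} Here $f$ is constant along $\mathbf 1$, so $f^*\equiv+\infty$ off $N^\perp$ and $x_i=\nabla f^*(\xi_i)$ is not literally true. This is harmless. Step~1 only uses $D_{f^*}(\xi_{i+1},\xi_i)$ in the role of $D_f(x_i,x_{i+1})$. Step~2 only uses the symmetrized quantity $D_f(x_i,x_{i+1})+D_f(x_{i+1},x_i)=\langle\xi_i-\xi_{i+1},x_i-x_{i+1}\rangle$. If you state dual relative smoothness as $D_{\phi^*}(\nabla f(x),\nabla f(y))\le L\,D_f(y,x)$, your proof runs verbatim without ever invoking $f^*$.
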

Therefore, Sinkhorn iteration \eqref{eq:sinkhorn-descent} converges with rate $\mathcal{O}(1/k)$: $\phi^*(\nabla f(v_k))-\phi^*(0)=\mathcal{O}(1/k)$. 


\subsection{Normalized Sinkhorn and linear convergence}
\label{app:normalized-sinkhorn}
In this section, we show the normalized Sinkhorn iteration \eqref{eq:normalized-sinkhorn} converges linearly.
First, we have the following lemma for the Sinkhorn update.

\begin{lemma}\label{lem:sinkhorn-descent}
Let
$
v^+:=v-\nabla\phi^*(\nabla f(v))
$
be the Sinkhorn update. Then
\begin{equation}\label{eq:sinkhorn-descent}
f(v^+)-f(v)
\le
-D_{\phi^*}(0,\nabla f(v))
-D_{\phi^*}(\nabla f(v^+),0).
\end{equation}
\end{lemma}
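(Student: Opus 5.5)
The plan is to prove \eqref{eq:sinkhorn-descent} as the standard one-step descent inequality for unit-step dual mirror descent under dual relative smoothness with $L=1$, writing every term through Fenchel conjugates. Set $\xi:=\nabla f(v)$, $\xi^+:=\nabla f(v^+)$, and $d:=\nabla\phi^*(\xi)$, so that $v^+=v-d$. The first step will be to express $f(v)$ and $f(v^+)$ through $f^*$. For a gradient point we have the Fenchel identity $f(v)=\langle v,\xi\rangle-f^*(\xi)$, and similarly for $v^+$. Then
$$
f(v^+)-f(v)=\langle v^+,\xi^+\rangle-\langle v,\xi\rangle-f^*(\xi^+)+f^*(\xi).
$$
Using $\nabla f^*(\xi)=v$ and $\nabla f^*(\xi^+)=v^+$, this rearranges to
$$
f(v^+)-f(v)=-D_{f^*}(\xi,\xi^+)+\langle v^+-v,\xi^+\rangle
=-D_{f^*}(\xi,\xi^+)-\langle d,\xi^+\rangle .
$$

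The second step applies the dual relative smoothness inequality $D_{\phi^*}(\xi,\eta)\le D_{f^*}(\xi,\eta)$, established just above (via $\nabla^2\phi^*(\nabla f)\preceq\nabla^2 f^*(\nabla f)$). With $\eta=\xi^+$ it gives
$$
f(v^+)-f(v)\le -D_{\phi^*}(\xi,\xi^+)-\langle \nabla\phi^*(\xi),\xi^+\rangle .
$$

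The third step is a three-point identity for the Bregman divergence of $\phi^*$ at the points $\xi,\xi^+,0$. Since $\nabla\phi^*(0)=0$, expanding the definitions gives
$$
D_{\phi^*}(0,\xi)+D_{\phi^*}(\xi^+,0)=\phi^*(\xi^+)-\phi^*(\xi)+\langle\nabla\phi^*(\xi),\xi\rangle,
$$
and
$$
D_{\phi^*}(\xi,\xi^+)+\langle\nabla\phi^*(\xi),\xi^+\rangle=\phi^*(\xi)-\phi^*(\xi^+)-\langle\nabla\phi^*(\xi^+),\xi-\xi^+\rangle+\langle\nabla\phi^*(\xi),\xi^+\rangle .
$$
Subtracting the first display from the second leaves $D_{\phi^*}(\xi,\xi^+)+D_{\phi^*}(\xi^+,\xi)\ge0$ on the right after regrouping. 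Hence
$$
D_{\phi^*}(\xi,\xi^+)+\langle\nabla\phi^*(\xi),\xi^+\rangle\ \ge\ D_{\phi^*}(0,\xi)+D_{\phi^*}(\xi^+,0).
$$
Combining this with the second step yields \eqref{eq:sinkhorn-descent}.

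The main obstacle is the rigorous use of the dual relative smoothness inequality. That inequality was derived from a Hessian comparison that holds only on the set of gradients $\nabla f(v)$, and $f$ is not strictly convex because of the shift invariance \eqref{eq:Fc}. I would handle this by working on $N^\perp$, or by noting that $\xi=c_P-b$ always satisfies $\langle\xi,\mathbf{1}\rangle=0$, so both $\xi$ and $\xi^+$ lie in the relevant affine domain. I would then integrate the Hessian comparison along the segment between $\xi^+$ and $\xi$, which stays in $\{\eta>-b,\ \langle\eta,\mathbf{1}\rangle=0\}$, the image of $\nabla f$. Along that segment $\nabla^2 f^*$ is understood as the inverse of $\nabla^2 f$ restricted to $N^\perp$.
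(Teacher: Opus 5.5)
Your route (Fenchel/Bregman duality, then dual relative smoothness, then a three-point identity with $\nabla\phi^*(0)=0$) is the paper's. However, the arguments of the Bregman divergences are reversed, and this breaks two steps.

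First, the identity in your opening step is false. From $f(v)=\langle v,\xi\rangle-f^*(\xi)$, $f(v^+)=\langle v^+,\xi^+\rangle-f^*(\xi^+)$ and $\nabla f^*(\xi)=v$, one gets
\[
D_{f^*}(\xi^+,\xi)=f^*(\xi^+)-f^*(\xi)-\langle v,\xi^+-\xi\rangle=\langle v^+-v,\xi^+\rangle-\bigl(f(v^+)-f(v)\bigr).
\]
So the correct statement is $f(v^+)-f(v)=-D_{f^*}(\xi^+,\xi)+\langle v^+-v,\xi^+\rangle$, which is Bregman duality $D_f(v,v^+)=D_{f^*}(\xi^+,\xi)$. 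Your version with $D_{f^*}(\xi,\xi^+)$ is off by $D_{f^*}(\xi^+,\xi)-D_{f^*}(\xi,\xi^+)$, which is generically nonzero. For the scalar $f(v)=e^v$ with $v=0$, $v^+=1$, the true increment is $e-1$, while your formula gives $2$.

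Second, the step that was supposed to close the argument is miscomputed. Subtracting your first three-point display from the second gives
\[
2\phi^*(\xi)-2\phi^*(\xi^+)-\langle\nabla\phi^*(\xi)+\nabla\phi^*(\xi^+),\,\xi-\xi^+\rangle=D_{\phi^*}(\xi,\xi^+)-D_{\phi^*}(\xi^+,\xi).
\]
This is the \emph{difference} of the two divergences, not their sum, and it has no sign. As a general inequality your step-3 claim is false: with $\nabla\phi^*(\eta)=\log(1+\eta)$ (the case $b=1$), $\xi=-1/2$, $\xi^+=1$, this difference is about $-0.47$. So your chain is a false identity followed by an unjustified inequality, even though the endpoint is true.

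The repair is to keep the order $(\xi^+,\xi)$ throughout. Dual relative smoothness is then used as $D_{\phi^*}(\xi^+,\xi)\le D_{f^*}(\xi^+,\xi)$. The last step becomes the exact identity $D_{\phi^*}(\xi^+,\xi)+\langle\nabla\phi^*(\xi),\xi^+\rangle=D_{\phi^*}(0,\xi)+D_{\phi^*}(\xi^+,0)$, with no inequality needed; this is exactly the paper's proof.

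Your closing remark on rigor is sound and goes beyond the paper, which writes $\nabla^2 f(v)^{-1}$ although $\nabla^2 f(v)\mathbf 1=0$. Since $\xi^+-\xi\perp\mathbf 1$, the ambiguity $v+N$ in $\nabla f^*(\xi)$ does not affect $D_{f^*}(\xi^+,\xi)$. For $h\in N^\perp$, the comparison $h^\top\mathrm{diag}(c_P)^{-1}h\le h^\top\nabla^2 f(v)^{+}h$ (pseudo-inverse) follows from $\nabla^2 f(v)\preceq\mathrm{diag}(c_P)$. To see this, write both sides as $\sup_x\bigl(2\langle h,x\rangle-x^\top Ax\bigr)$, with $A=\mathrm{diag}(c_P)$ and $A=\nabla^2 f(v)$ respectively.
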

\begin{proof}
Let $g:=\nabla f(v)$ and $g^+:=\nabla f(v^+)$. Since $v-v^+=\nabla\phi^*(g)-\nabla\phi^*(0)$, expansion at $v^+$ and Bregman duality give
$$
f(v^+)-f(v)
=
\langle g^+,v^+-v\rangle-D_f(v,v^+)
=
-\langle g^+,\nabla\phi^*(g)-\nabla\phi^*(0)\rangle-D_{f^*}(g^+,g).
$$
By the three-point identity,
$$
-\langle g^+-0,\nabla\phi^*(g)-\nabla\phi^*(0)\rangle
=
-D_{\phi^*}(g^+,0) - D_{\phi^*}(0,g) + D_{\phi^*}(g^+,g).
$$
Using dual relative smoothness, $D_{\phi^*}(g^+,g)\le D_{f^*}(g^+,g)$, we obtain \eqref{eq:sinkhorn-descent}.
\end{proof}

Then, we show that $f$ is coercive on the subspace $N^\perp$. 
\begin{lemma}\label{lem:coercivity}
    $f(v)\to+\infty$ as $\|v\|\to\infty$ along $N^\perp$.
\end{lemma}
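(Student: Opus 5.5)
We need to prove that $f(v)\to+\infty$ as $\|v\|\to\infty$ with $v\in N^\perp$. The plan is to compute $f$ in closed form, bound it below by a function that is visibly coercive on $N^\perp$, and use strict positivity of $a,b,K$.

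\textbf{Step 1: closed form of $f$.} Substituting the row-scaling solution $u_i(v)=\log a_i-\log(K e^{v})_i$ into \eqref{eq:dual-obj} gives $\sum_{i,j}P_{ij}=\sum_i a_i=1$. Hence
$$
f(v)=1-\sum_i a_i\log a_i+\sum_i a_i\log\bigl(Ke^{v}\bigr)_i-\langle v,b\rangle .
$$
Only the terms depending on $v$ matter, and the constant is finite because $a>0$.

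\textbf{Step 2: lower bound.} Let $\kappa:=\min_{ij}K_{ij}=e^{-\max C_{ij}}>0$. Then $(Ke^v)_i\ge \kappa\, e^{\max_j v_j}$ for every $i$. Using $\sum_i a_i=1$ and $\sum_j b_j=1$,
$$
f(v)\ge \mathrm{const}+\log\kappa+\max_j v_j-\langle v,b\rangle=\mathrm{const}'+\sum_j b_j\bigl(\max_\ell v_\ell-v_j\bigr).
$$
Set $b_{\min}:=\min_j b_j$. Each summand is nonnegative, so the sum is at least $b_{\min}\bigl(\max_\ell v_\ell-\min_\ell v_\ell\bigr)$.

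\textbf{Step 3: coercivity on $N^\perp$.} This is the only step that uses the gauge constraint, and it is the main point. If $v\in N^\perp$ then $\sum_j v_j=0$, so $\max v\ge 0\ge\min v$. Therefore $\max v-\min v\ge \|v\|_\infty\ge \|v\|/\sqrt{m}$. This gives
$$
f(v)\ge \mathrm{const}'+\frac{b_{\min}}{\sqrt m}\|v\|\;\to\;+\infty .
$$
The argument requires $b_{\min}>0$. If some $b_j$ could vanish, the bound would degenerate, and I would assume strictly positive marginals, as is implicit throughout the paper since $\phi$ involves $b_j e^{v_j}$.

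Without the restriction to $N^\perp$ the statement fails, because of the shift invariance $f(v+c\mathbf 1)=f(v)$ noted in Section~\ref{sec:sinkhorn-dmd}. The range $\max v-\min v$ is exactly the seminorm that vanishes on $N$, and the projection onto $N^\perp$ is what turns this seminorm into a genuine norm.
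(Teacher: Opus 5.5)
Your proof is correct and matches the paper's argument step for step: the same closed form for $f$, the same bound $\log(Ke^{v})_i \ge \max_j v_j - \|C\|_\infty$ (your $\log\kappa$), and the same reduction to the nonnegative quantity $\max_j v_j - \langle v,b\rangle$, which vanishes exactly on $N$ when $b>0$. The only difference is the last step: you get the explicit constant $b_{\min}/\sqrt{m}$ from $\max_j v_j \ge 0 \ge \min_j v_j$ on $N^\perp$, whereas the paper uses positive $1$-homogeneity and compactness of the unit sphere in $N^\perp$ to obtain an unspecified $c>0$.
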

\begin{proof}
The minimizer $u(v)$ satisfies $\partial_u F = 0$, i.e.\ $\sum_j e^{u_i(v)+v_j-C_{ij}} = a_i$
for all $i$, giving $u_i(v) = \log a_i - \log(Ke^v)_i$ where $(Ke^v)_i := \sum_j e^{v_j - C_{ij}}$.
Summing over $i$ shows $\sum_{i,j} e^{u_i(v)+v_j - C_{ij}} = \mathbf{1}^\top a = 1$,
so substituting $u(v)$ into $F$ gives
\[
  f(v)
  = 1 - \langle u(v), a\rangle - \langle v,b\rangle
  = 1 - \sum_i a_i \log a_i + \sum_i a_i\log(Ke^v)_i - \langle v,b\rangle.
\]
Since $(Ke^v)_i \ge e^{v_j - C_{ij}}$ for every $j$, taking $j^*=\arg\max_j v_j$ gives
$\log(Ke^v)_i \ge \max_j v_j - C_{ij^*} \ge \max_j v_j - \|C\|_\infty$.
Summing over $i$ with weights $a_i$:
\begin{equation}\label{eq:lb}
  f(v) \;\ge\; 1 - \sum_i a_i \log a_i - \|C\|_\infty + \max_j v_j - \langle v,b\rangle.
\end{equation}
Since $b>0$ and $\mathbf{1}^\top b=1$, a weighted average cannot exceed its maximum:
$\langle v,b\rangle = \sum_j b_j v_j \le \max_j v_j$,
with equality if and only if all $v_j$ are equal, i.e.\ $v\in N:=\mathrm{span}\{\mathbf{1}\}$.
Hence $g(v) \ge 0$, with $g(v)=0$ only on $N$. On $N^\perp\setminus\{0\}$ we thus
have $g(v)>0$. Since $g$ is continuous and positively $1$-homogeneous, compactness
of $\{v\in N^\perp:\|v\|=1\}$ yields $c>0$ with $g(v)\ge c\|v\|$ for all $v\in N^\perp$.
\end{proof}
Therefore, every sublevel set of $f|_{N^\perp}$ is bounded, thus compact.
The next lemma shows that the function $f$ satisfies the Polyak--\L ojasiewicz (P-L) inequality on sublevel sets of $f|_{P_{N^\perp}}$.
\begin{lemma}\label{prop:PL}
For every sublevel set $\mathcal S\subset N^\perp$, there exists $0<\mu < 1$ such that
\begin{equation}\label{eq:mirror-PL-appendix}
f(v)-f(v^*)
\le
\frac1\mu D_{\phi^*}(0,\nabla f(v)),
\qquad
v\in\mathcal S.
\end{equation}
\end{lemma}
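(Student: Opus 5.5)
We prove the mirror P--L inequality \eqref{eq:mirror-PL-appendix} on a fixed sublevel set $\mathcal S\subset N^\perp$ in two steps. First we show that $f-f(v^*)$ is bounded by a quadratic in $\nabla f$ measured in a suitable norm. Then we show that $D_{\phi^*}(0,\nabla f(v))$ bounds that quadratic from below. The uniform constants come from compactness of $\mathcal S$, which holds by Lemma~\ref{lem:coercivity} and the remark after it.

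\textbf{Step 1: $f$ is strongly convex on $\mathcal S$ along $N^\perp$.} The Hessian \eqref{eq:HessianL} is $\nabla^2 f(v)=\mathrm{diag}(c_P)-P^\top\mathrm{diag}(r_P)^{-1}P$. Here $r_P=a$, and all entries of $P=P(u(v),v)$ are positive. I will argue that its kernel is exactly $N$.
\begin{itemize}
\item If $w^\top\nabla^2 f(v)w=0$, write the quadratic form as a weighted variance: $\sum_i a_i^{-1}\sum_{j}P_{ij}\bigl(w_j-\bar w_i\bigr)^2$-type expression, i.e. $\sum_j c_j w_j^2-\sum_i a_i\bigl(\sum_j P_{ij}w_j/a_i\bigr)^2$.
\item This vanishes iff $w$ is constant on the support of each row. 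Since $P_{ij}>0$ for all $i,j$, this forces $w\in N$.
\end{itemize}
Hence $\nabla^2 f(v)$ restricted to $N^\perp$ is positive definite. By continuity in $v$ and compactness of the convex hull of $\mathcal S\cup\{v^*\}$ (still bounded, and $f$ is smooth everywhere), there is $\lambda>0$ with $w^\top\nabla^2 f(v)w\ge\lambda\|w\|^2$ for all $w\in N^\perp$ and all $v$ in that hull.

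Since $\nabla f(v)=c_P-b\in N^\perp$ (both have unit mass), the standard strong-convexity argument restricted to $N^\perp$ gives
$$f(v)-f(v^*)\le\frac{1}{2\lambda}\|\nabla f(v)\|^2 .$$

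\textbf{Step 2: lower bound on $D_{\phi^*}$.} Compute $D_{\phi^*}(0,\xi)=\int_0^1 t\,\xi^\top\nabla^2\phi^*(s\xi)\xi\,dt$ with $s=1-t$, where $\nabla^2\phi^*(\eta)=\mathrm{diag}(b+\eta)^{-1}$. For $\xi=c_P-b$, the points $b+s\xi=(1-s)b+s\,c_P$ have entries at most $\max(\|b\|_\infty,\|c_P\|_\infty)\le 1$, since $c_P$ is a probability vector. Hence
$$D_{\phi^*}(0,\xi)\ge\tfrac12\|\xi\|^2 .$$
Combining with Step~1 yields $f(v)-f(v^*)\le\lambda^{-1}D_{\phi^*}(0,\nabla f(v))$. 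We take $\mu=\min(\lambda,1/2)<1$.

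\textbf{Main obstacle.} The delicate point is Step~1: establishing a uniform lower bound $\lambda$ on $\nabla^2 f$ over $N^\perp$. This needs the iterates confined to a compact set, so that $\min_{ij}P_{ij}$ stays bounded away from zero. The bound on $D_{\phi^*}$ itself is easy.

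If one wants to avoid the Hessian-kernel argument, an alternative is to use strict convexity of $f|_{N^\perp}$ directly together with compactness. The quantitative version via $\min_{ij}P_{ij}\ge e^{-\mathrm{osc}(u)-\mathrm{osc}(v)-\|C\|_\infty}$ on $\mathcal S$ is the route I would try first.
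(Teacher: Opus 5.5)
Your proposal is correct and follows essentially the same route as the paper. You show $\ker\nabla^2 f(v)=N$ via the row-wise variance (Cauchy--Schwarz) argument, use compactness to get a uniform $\lambda$ on $N^\perp$ and hence $f(v)-f(v^*)\le\frac{1}{2\lambda}\|\nabla f(v)\|^2$, and then convert $\|\nabla f(v)\|^2$ into $D_{\phi^*}(0,\nabla f(v))$ through a lower bound on $\nabla^2\phi^*=\mathrm{diag}(b+\cdot)^{-1}$.

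Your Step~2 is a bit cleaner than the paper's. Bounding $(1-s)b+s\,c_P\le\mathbf 1$ along the whole segment gives the explicit constant $\tfrac12$ and covers the intermediate Taylor point, which the paper leaves implicit. One small slip: the remainder should read $D_{\phi^*}(0,\xi)=\int_0^1 s\,\xi^\top\nabla^2\phi^*(s\xi)\xi\,ds$, since your weight $1-s$ gives $D_{\phi^*}(\xi,0)$; this does not affect the bound.
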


\begin{proof}
It follows from \eqref{eq:HessianL} that
\[
\ker \nabla^2 f(v)=\mathrm{span}\{\mathbf 1\}.
\]
Indeed, for any $h\in\mathbb R^n$,
\[
h^\top \nabla^2 f(v)h
=
\sum_j c_{P,j}h_j^2
-
\sum_i \frac1{r_{P,i}}
\Bigl(\sum_j P_{ij}h_j\Bigr)^2.
\]
By Cauchy--Schwarz,
\[
\Bigl(\sum_j P_{ij}h_j\Bigr)^2
\le
r_{P,i}\sum_j P_{ij}h_j^2,
\]
with equality iff $h_j$ is constant on the support of the $i$-th row of $P$.
Since $P_{ij}>0$, equality holds iff $h$ is constant.
Therefore $\nabla^2 f(v)$ is positive definite on $N^\perp$.

Since $f$ is $C^2$, its Hessian satisfies
\[
\nabla^2 f(v)\succ 0
\qquad\text{on }N^\perp.
\]

By continuity of $\nabla^2 f$ and compactness of the sublevel set $\mathcal S$, there exists $\lambda_{\min}>0$ such that
\[
h^\top \nabla^2 f(v) h
\ge
\lambda_{\min}\|h\|^2,
\qquad
v\in\mathcal S,
\quad
h\in N^\perp.
\]
Hence $f$ is strongly convex on $\mathcal S\subset N^\perp$, and the standard Polyak--\L ojasiewicz inequality gives
\begin{equation}
\label{eq:euclidean-PL}
f(v)-f(v^*)
\le
\frac{1}{2\lambda_{\min}}
\|\nabla f(v)\|^2.
\end{equation}

Next, recall
\[
\nabla^2\phi^*(\xi)
=
\mathrm{diag}(b+\xi)^{-1}.
\]
Since
\[
b+\xi=c_P,
\]
and $v\in\mathcal S$, the coordinates of $c_P$ are uniformly bounded above on $\mathcal S$.
Therefore there exists $M_\phi>0$ such that
\[
\nabla^2\phi^*(\xi)
\succeq
\frac1{M_\phi}I.
\]
By Taylor expansion of the Bregman divergence,
\[
D_{\phi^*}(0,\xi)
=
\frac12
\xi^\top
\nabla^2\phi^*(\tilde\xi)\xi
\ge
\frac1{2M_\phi}\|\xi\|^2,
\]
for some $\tilde\xi$ on the segment joining $0$ and $\xi$.
Substituting $\xi=\nabla f(v)$ yields
\[
\|\nabla f(v)\|^2
\le
2M_\phi D_{\phi^*}(0,\nabla f(v)).
\]
Combining with \eqref{eq:euclidean-PL} gives
\[
f(v)-f(v^*)
\le
\frac{M_\phi}{\lambda_{\min}}
D_{\phi^*}(0,\nabla f(v)).
\]
Absorbing constants proves \eqref{eq:mirror-PL-appendix}.
\end{proof}

Combining the above results gives a linear convergence rate for the normalized Sinkhorn iteration \eqref{eq:normalized-sinkhorn}, and hence for the original Sinkhorn iteration \eqref{eq:md-general}.
\begin{theorem}\label{thm:sinkhorn-linear-conv}
Let $v_k$ be the sequence generated by the Sinkhorn iteration \eqref{eq:md-general}. Then the function $f$ converges with rate 
\[
f(v_k)-f(v^*)
\le
(1-\mu)^k(f(v_0)-f(v^*)).
\]
\end{theorem}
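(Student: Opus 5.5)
The plan is to combine the one-step descent inequality of Lemma~\ref{lem:sinkhorn-descent} with the mirror Polyak--{\L}ojasiewicz inequality of Lemma~\ref{prop:PL}, both applied along the normalized iteration \eqref{eq:normalized-sinkhorn}. First, reduce to the normalized iterates. Sinkhorn and normalized Sinkhorn differ only by additive multiples of $\mathbf 1$, and $f(v+c\mathbf 1)=f(v)$. Hence $f(v_k)$ is the same for both sequences, and it suffices to work with $v_k\in N^\perp$, starting from $v_0\in N^\perp$ (project $v_0$ first if needed). We also need $P_{N^\perp}\nabla f(v)=\nabla f(v)$. This holds because $\langle \nabla f(v),\mathbf 1\rangle=\mathbf 1^\top c_P-\mathbf 1^\top b=1-1=0$, since $\mathbf 1^\top c_P=\mathbf 1^\top r_P=\mathbf 1^\top a=1$ once $u=u(v)$. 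So the normalized step equals the plain step $v-\nabla\phi^*(\nabla f(v))$ followed by projection onto $N^\perp$, and that projection does not change $f$.

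Second, monotonicity. By Lemma~\ref{lem:sinkhorn-descent}, $f(v_{k+1})\le f(v_k)-D_{\phi^*}(0,\nabla f(v_k))\le f(v_k)$, using that the Bregman terms are nonnegative. Thus every iterate stays in the sublevel set $\mathcal S=\{v\in N^\perp: f(v)\le f(v_0)\}$. By Lemma~\ref{lem:coercivity} this set is compact, so Lemma~\ref{prop:PL} applies on $\mathcal S$ with some $\mu\in(0,1)$ depending only on $\mathcal S$.

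Third, the contraction. Drop the term $-D_{\phi^*}(\nabla f(v_{k+1}),0)\le 0$ and insert the PL bound $D_{\phi^*}(0,\nabla f(v_k))\ge \mu\,(f(v_k)-f(v^*))$. This gives
\[
f(v_{k+1})-f(v^*)\le (1-\mu)\bigl(f(v_k)-f(v^*)\bigr),
\]
and induction yields the claim. Since $\mu<1$, the factor $1-\mu$ lies in $(0,1)$.

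The main obstacle is making sure the PL constant is uniform along the whole trajectory, so that a single $\mu$ works for all $k$. The proof of Lemma~\ref{prop:PL} bounds $c_P$ from above and $\nabla^2 f$ from below on $N^\perp$, and both bounds rely on compactness of $\mathcal S$. That compactness in turn rests on the monotone decrease in the second step and the coercivity of Lemma~\ref{lem:coercivity}. Note also that the upper bound on $c_P$ is automatic, since $c_P\le \mathbf 1$ entrywise because its entries sum to $1$. Once monotonicity is secured, no further difficulty is expected.
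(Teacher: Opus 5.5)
Your proposal is correct and follows the paper's proof: Lemma~\ref{lem:sinkhorn-descent} gives monotone decrease, so the iterates stay in a sublevel set that is compact by Lemma~\ref{lem:coercivity}, and on that set the mirror Polyak--{\L}ojasiewicz inequality of Lemma~\ref{prop:PL} turns the descent bound into the $(1-\mu)$ contraction. Your explicit reduction of the plain Sinkhorn iterates to the normalized ones in $N^\perp$, using $f(v+c\mathbf{1})=f(v)$ and $\langle\nabla f(v),\mathbf{1}\rangle=0$, is a worthwhile addition, because the paper's proof calls $\{v: f(v)\le f(v_0)\}$ bounded without restricting to $N^\perp$.
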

\begin{proof}
By Lemma~\ref{lem:sinkhorn-descent} and Lemma~\ref{lem:coercivity}, the sequence of iterates $\{ v_k \}$ remain in the bounded sublevel set $\mathcal S=\{v:\ f(v)\le f(v_0)\}$. By Proposition~\ref{prop:PL}, there exists $\mu>0$ such that
\[f(v)-f(v^*)
\le
\frac1\mu
D_{\phi^*}(0,\nabla f(v)),
\qquad v\in\mathcal S.
\]
By Lemma~\ref{lem:sinkhorn-descent},
\[
f(v_{k+1})-f(v_k)\le -D_{\phi^*}(0,\nabla f(v_k)).
\]
Combining the two inequalities gives
\[
f(v_{k+1})-f(v^*)\le (1-\mu)(f(v_k)-f(v^*)).
\]
Iterating this inequality yields the claimed linear convergence rate.
\end{proof}

\section{Discussion on Lemma~\ref{assump:boundedness}}
\subsection{Sufficient conditions for the \texorpdfstring{$x$}{x}-part}
The $x$-part of Lemma~\ref{assump:boundedness} follows from more standard uniform bounds on the iterates and gradients. 
\begin{lemma}
suppose there exists a constant $C>0$ such that
$$
\|x_k-x^\star\|_2\le C,
\qquad
\|\nabla f(x_k)\|_\infty\le C
\qquad\text{for all }k,
$$
then the $x$-part of Lemma~\ref{assump:boundedness} holds.
\end{lemma}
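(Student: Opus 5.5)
Recall that the $x$-part of Lemma~\ref{assump:boundedness} asks for $\|x_k-x^\star\|_{D(p(x_k))}\le R$ with $p(x_k)=\nabla\phi^*(\nabla f(x_k))=\log(\mathbf 1+\nabla f(x_k)./b)$. The plan is to show that the diagonal weight $D(p(x_k))$ is uniformly bounded above, entrywise, in terms of the constant $C$. Then the weighted norm is controlled by the Euclidean norm, and the Euclidean norm is bounded by hypothesis.

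\textbf{Step 1: control of $p_k$.} Write $\xi_k:=\nabla f(x_k)=c_P-b$, so that $\mathbf 1+\xi_k./b=c_P./b$. For the upper side, $\|\xi_k\|_\infty\le C$ gives
$$(p_k)_j\le \log\bigl(1+C/b_j\bigr)\le \log\bigl(1+C/b_{\min}\bigr),$$
where $b_{\min}:=\min_j b_j>0$. A lower bound on $(p_k)_j$ is not needed, for the reason explained in Step 2.

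\textbf{Step 2: bound on the diagonal entries.} By \eqref{eq:D}, the $j$-th diagonal entry of $D(z)$ is $b_j\,\psi(z_j)$ with $\psi(s):=(e^s-1)/s$ and $\psi(0)=1$. The function $\psi$ is positive and increasing on $\mathbb R$, since it is the average of $e^{ts}$ over $t\in[0,1]$. Hence for every $j$,
$$0<b_j\psi\bigl((p_k)_j\bigr)\le b_j\,\psi\bigl(\log(1+C/b_j)\bigr)=\frac{C}{\log(1+C/b_j)}=:\Lambda_j.$$
This holds whatever the sign of $(p_k)_j$, which is why only the upper bound from Step 1 is used. Set $\Lambda:=\max_j\Lambda_j$, which depends only on $C$ and $b$.

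\textbf{Step 3: conclusion.} From Step 2,
$$\|x_k-x^\star\|_{D(p_k)}^2=\sum_j D(p_k)_{jj}\,(x_k-x^\star)_j^2\le \Lambda\|x_k-x^\star\|_2^2\le \Lambda C^2.$$
Taking $R\ge \sqrt{\Lambda}\,C$ gives the claim. If one wants a single $R$ serving both parts of \eqref{eq:boundedness}, take the maximum of this value and the $y$-bound.

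The only delicate point I anticipate is well-definedness of $p_k$. This requires $c_P>0$, which always holds because every $P_{ij}>0$, so $p_k$ is finite. A reader might worry that $D(p_k)$ degenerates as $c_P\to0$; that degeneration only shrinks the weights, so it does not affect an upper bound. Monotonicity of $\psi$ is the key observation and makes the argument routine.
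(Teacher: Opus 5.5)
Your proof is correct and follows the same route as the paper: bound $\lambda_{\max}(D(p_k))$ through the scalar function $(e^t-1)/t$, then multiply by the Euclidean bound $C^2$ to get $R=C\sqrt{\Lambda}$. Your use of the monotonicity of $\psi$ is a genuine tightening. The paper asserts that each coordinate of $p_k$ lies in a bounded interval, which the gradient bound $\|\nabla f(x_k)\|_\infty\le C$ alone does not give from below when $C\ge b_j$ (it does not exclude $c_{P,j}\to 0$), whereas your argument needs only the upper bound and produces the explicit constant $\Lambda=\max_j C/\log(1+C/b_j)$.
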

\begin{proof}
We have
$$
\|x_k-x^\star\|_{D(x_k^\phi)}^2
\le \lambda_{\max}(D(x_k^\phi))\|x_k-x^\star\|_2^2
\le C^2\,\lambda_{\max}(D(x_k^\phi)).
$$
Since
$$
\nabla\phi(z)=b.*e^z-b,
$$
the matrix $D(z)$ is given by
$$
D(z)=\mathrm{diag}\!\left(b.*\frac{e^z-1}{z}\right),
$$
with the continuous extension at $z_i=0$. Substituting $z=x_k^\phi=\nabla\phi^*(\nabla f(x_k))$ yields
$$
x_k^\phi=\log\!\bigl(\mathbf{1}+\nabla f(x_k)./b\bigr),
$$
and hence
$$
D(x_k^\phi)=\mathrm{diag}\!\left(b.*\frac{e^{x_k^\phi}-1}{x_k^\phi}\right)
=\mathrm{diag}\!\left(\frac{\nabla f(x_k)}{\log(1+\nabla f(x_k)./b)}\right).
$$
If $\|\nabla f(x_k)\|_\infty\le C$, then each coordinate of $x_k^\phi$ lies in a bounded interval, provided $b_i$ is bounded away from $0$. Since the scalar function
$$
h(t):=\frac{e^t-1}{t}
$$
extends continuously to $t=0$ with $h(0)=1$ and is bounded on every bounded interval, there exists a constant $C_D>0$ such that
$$
\lambda_{\max}(D(x_k^\phi))\le C_D
\qquad\text{for all }k.
$$
Therefore,
$$
\|x_k-x^\star\|_{D(x_k^\phi)}^2\le C^2 C_D,
$$
so Lemma~\ref{assump:boundedness} holds with $R=C\sqrt{C_D}$.
\end{proof}
 In this sense, the assumption is mild once the iterates stay in a bounded region and the gradient remains uniformly bounded.

\subsection{Sufficient conditions of Lemma~\ref{assump:boundedness} }
\label{app:boundedness-iterates}
In this section, we verify that the iterates of Acc-Sinkhorn remain bounded under certain mild, verifiable conditions.






Consider the Acc-Sinkhorn scheme
\begin{align*}
x_{k+1} - x_k &= \alpha(y_k - x_{k+1}) - \frac{1}{\alpha}\nabla \phi^*(\nabla f(x_k)),\\
y_{k+1} - y_k &= \alpha(x_{k+1} - y_{k+1}) - \frac{\alpha}{\mu}\nabla \phi^*(\nabla f(x_{k+1}))
\end{align*}
with $\alpha = \sqrt{c\mu}$ for some $c > 0$, and $\mu>0$. We have the following result on the boundedness of the iterates.
\begin{theorem}\label{thm:condition-boundedness}
Assume $\alpha^2 = \mu/L$, then Lemma~\ref{assump:boundedness} holds if the following conditions hold for all $k$:
\begin{equation}
    \begin{aligned}
\|\nabla f(x_k)\|_{D_k^{-1} D_{k+1} D_k^{-1}}^2 <{}& \frac{4-\alpha^2-2\alpha^{1/2}}{2\alpha(1+\alpha)^2}D_{\phi^*}(0,\nabla f(x_{k}))\\
\|v_k\|_{D_{k+1}-D_k} <{}& \alpha^{3/4}\|v_{k+1}\|_{D_{k+1}}.
    \end{aligned}
\end{equation}
\end{theorem}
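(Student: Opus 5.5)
We will prove Theorem~\ref{thm:condition-boundedness} by constructing a Lyapunov function different from $\mathcal E$, one whose decay does not produce the positive perturbation $\frac{\alpha\mu}{2}R^2$ that appears in Lemma~\ref{thm:single-step}. The two hypotheses are used to absorb the only terms that fail to have a sign. The statement writes $v_k$; we read this as the auxiliary variable measured from the optimum, that is, $v_k := y_k - x^\star$, expressed in the scaled variable of Algorithm~\ref{alg:A2MD}.

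The candidate Lyapunov function is
$$
\mathcal L_k := f(x_k)-f(x^\star) + \tfrac{\mu}{2}\|v_k\|_{\mathcal D_k}^2 .
$$
The goal is to show $\mathcal L_{k+1}\le \mathcal L_k$. Once this holds, $f(x_k)-f(x^\star)$ is bounded by $\mathcal L_0$. Lemma~\ref{lem:coercivity} then confines $x_k$ to a compact sublevel set of $f|_{N^\perp}$. On that set $\nabla f(x_k)$ is bounded, so $\mathcal D_k$ is bounded above and below by the argument of the preceding subsection. This gives both the $x$-bound and the $y$-bound in \eqref{eq:boundedness}.

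The one-step estimate proceeds in four steps.

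\textbf{Step 1.} Treat the $f$-part exactly as in Lemma~\ref{lem:sinkhorn-descent}. Expand $f(x_{k+1})-f(x_k)$ at $x_{k+1}$. Write $x_{k+1}-x_k$ as the sum of a pure Sinkhorn displacement $-\frac1{\alpha}p_k$ and a coupling term $\alpha(y_k-x_{k+1})$. Apply the three-point identity and dual relative smoothness ($L=1$) to the Sinkhorn part. This yields $-D_{\phi^*}(0,\nabla f(x_k))-D_{\phi^*}(\nabla f(x_{k+1}),0)$ plus a cross term $\alpha\langle\nabla f(x_{k+1}),y_k-x_{k+1}\rangle$.

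\textbf{Step 2.} For the quadratic part, use the identity $\|a\|^2-\|b\|^2 = 2\langle a-b,a\rangle-\|a-b\|^2$ in the fixed metric $\mathcal D_{k+1}$. The $y$-update produces the term $-\langle p_{k+1}, v_{k+1}\rangle_{\mathcal D_{k+1}}\alpha$. Since $\mathcal D_{k+1}p_{k+1}=\nabla\phi(p_{k+1})=\nabla f(x_{k+1})$, this equals $-\alpha\langle\nabla f(x_{k+1}),v_{k+1}\rangle$, which cancels the cross term from Step 1 up to convexity terms $-\alpha D_f(x^\star,x_{k+1})$ and a remainder. The remainder is a multiple of $\|\nabla f(x_k)\|_{D_k^{-1}D_{k+1}D_k^{-1}}^2$. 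It arises because $x_{k+1}-y_k$ involves $p_k=\mathcal D_k^{-1}\nabla f(x_k)$ measured in the $\mathcal D_{k+1}$-norm.

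\textbf{Step 3.} Absorb this remainder into $-D_{\phi^*}(0,\nabla f(x_k))$ using the first hypothesis. The constant $\frac{4-\alpha^2-2\alpha^{1/2}}{2\alpha(1+\alpha)^2}$ should come out of a Young inequality with weight $\alpha^{1/2}$ on the mixed term and the factor $(1+\alpha)^{-2}$ from solving the implicit $x$-update. The relation $\alpha^2=\mu/L$ with $L=1$ fixes the ratio between the $\frac{\alpha^2}{2\mu}\|p_{k+1}\|^2$ term and $D_{\phi^*}(\nabla f(x_{k+1}),0)$ so that the latter absorbs it.

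\textbf{Step 4.} Handle the change of metric, $\frac{\mu}{2}\|v_k\|^2_{\mathcal D_{k+1}-\mathcal D_k}$ as in Lemma~\ref{lem:chenge-D}. The second hypothesis bounds it by $\frac{\mu}{2}\alpha^{3/2}\|v_{k+1}\|^2_{\mathcal D_{k+1}}$. This is dominated by the dissipation $-\frac{\alpha\mu}{2}\|v_{k+1}\|^2_{\mathcal D_{k+1}}$ produced by the damping term $-\alpha v_{k+1}$ in the $y$-update, because $\alpha^{3/2}\le\alpha$ for $\alpha\le 1$. The leftover $\alpha^{1/2}$ slack is what the $2\alpha^{1/2}$ in the first condition accounts for.

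The main obstacle is Step 2–3: the bookkeeping showing that, after the cancellation, every mixed term between $p_k$, $p_{k+1}$ and $v_{k+1}$ carries a sign or is controlled by the exact constant in the first hypothesis. We would first carry out the computation with generic Young weights $\delta$ and then choose $\delta=\alpha^{1/2}$ to match the stated constants. If the constants do not match exactly, the weaker conclusion ``$\mathcal L_{k+1}\le\mathcal L_k$ for $\alpha$ sufficiently small'' still suffices for Lemma~\ref{assump:boundedness}.
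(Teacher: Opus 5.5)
\textbf{The gap is your choice $v_k:=y_k-x^\star$.} With that choice, $\mathcal L_k$ is exactly $\mathcal E(x_k,y_k;D_k)$. That is the paper's main Lyapunov function, not a different one, and its one-step difference necessarily contains the positive perturbation you want to avoid. The damping in the $y$-update is $\alpha(x_{k+1}-y_{k+1})$, which equals $-\alpha v_{k+1}$ only when $v=y-x$. With $v=y-x^\star$ it contributes
\[
\alpha\mu\langle y_{k+1}-x^\star,\,x_{k+1}-y_{k+1}\rangle_{D_{k+1}}
=\tfrac{\alpha\mu}{2}\bigl(\|x_{k+1}-x^\star\|_{D_{k+1}}^2-\|y_{k+1}-x^\star\|_{D_{k+1}}^2-\|x_{k+1}-y_{k+1}\|_{D_{k+1}}^2\bigr).
\]
So the term $+\frac{\alpha\mu}{2}\|x_{k+1}-x^\star\|_{D_{k+1}}^2$ reappears. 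It is the discrete form of $\frac{\mu}{2}\|x-x^\star\|_{\mathcal D}^2$ in the continuous identity and the source of $\frac{\alpha\mu}{2}R^2$ in Lemma~\ref{thm:single-step}.

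Neither hypothesis involves $x_{k+1}-x^\star$, so nothing in your Steps 3--4 can absorb this term. You would need an inequality like $\langle\nabla f(x_{k+1}),x_{k+1}-x^\star\rangle\gtrsim\mu\|x_{k+1}-x^\star\|_{D_{k+1}}^2$. That is a relative strong convexity which is not assumed, and it holds with a usable constant only on bounded sets, so it presupposes the conclusion. Consequently the dissipation $-\frac{\alpha\mu}{2}\|v_{k+1}\|^2$ you invoke in Step 4 does not exist. Your reading also makes the second hypothesis depend on the unknown $x^\star$, whereas the paper stresses that both conditions are checkable from the iterates. In the paper, $v_k:=y_k-x_k$.

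\textbf{How the paper's argument runs with $v_k=y_k-x_k$.} The energy is $E_k=f(x_k)-f(x^\star)+\frac{\mu}{2}\|v_k\|_{D_k}^2$. Now $y_{k+1}-y_k=-\alpha v_{k+1}-\frac{\alpha}{\mu}p_{k+1}$ is a genuine damping and $x^\star$ never appears. Eliminating $y$ gives, with $\tilde\alpha=\alpha/(1+\alpha)$,
\[
x_{k+1}-x_k=\tilde\alpha(v_k-\beta p_k),\qquad
v_{k+1}-v_k=-\alpha(\alpha+2)v_{k+1}+\alpha\beta p_k-\tfrac{\alpha(1+\alpha)}{\mu}p_{k+1}.
\]
The $p_k$ in the $v$-update, not the mechanism you describe in Step 2, produces the cross term $\frac{\mu}{L}\langle D_{k+1}v_{k+1},p_k\rangle$. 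Young's inequality with weight $\xi$ turns it into $\|p_k\|_{D_{k+1}}^2=\|\nabla f(x_k)\|_{D_k^{-1}D_{k+1}D_k^{-1}}^2$, which is the first hypothesis.

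\textbf{The missing shift.} The positive term $\frac{\alpha^2}{2\mu}\|p_{k+1}\|_{D_{k+1}}^2$ in your Step 3 equals, for $\alpha^2=\mu$, $\frac12\bigl(D_{\phi^*}(\nabla f(x_{k+1}),0)+D_{\phi^*}(0,\nabla f(x_{k+1}))\bigr)$. The Sinkhorn step supplies only $-D_{\phi^*}(\nabla f(x_{k+1}),0)$. These two divergences are $\mathrm{KL}(c_P\|b)$ and $\mathrm{KL}(b\|c_P)$, which are not comparable, so the one-step absorption fails. The paper instead uses $\widetilde E_k=E_k-\frac{\theta}{\tilde L}D_{\phi^*}(0,\nabla f(x_k))$ with $\tilde L=L(1+\alpha)$, whose difference supplies $-\frac{\theta}{\tilde L}D_{\phi^*}(0,\nabla f(x_{k+1}))$.

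\textbf{Where the constants come from.} Take $c=1$, $\eta=2\alpha/(L(1+\alpha)^2)$, $\theta=(\frac12+\alpha)/(1+\alpha)$ and $\xi=\frac{4-\alpha^2-2\alpha^{1/2}}{4\alpha(1+\alpha)}$. Then the net $\|v_{k+1}\|_{D_{k+1}}^2$-dissipation is only $\frac12\alpha^{7/2}$ (with $L=1$). Comparing it with $\frac{\mu}{2}\|v_k\|_{D_{k+1}-D_k}^2=\frac{\alpha^2}{2}\|v_k\|_{D_{k+1}-D_k}^2$ yields exactly $\alpha^{3/4}$, and the first constant is $2\xi/(1+\alpha)$.

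\textbf{A smaller point in Step 1.} Your Step 1 needs the unit Sinkhorn displacement $-p_k$ of \eqref{eq:scheme}, i.e.\ $\alpha\beta=1/L$. With the displacement $-\frac1\alpha p_k$ that you wrote, dual smoothness with $L=1$ leaves $(\frac1\alpha-1)D_{\phi^*}(\nabla f(x_{k+1}),\nabla f(x_k))\ge 0$ uncontrolled.
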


Several remarks are in order.
\begin{remark}
Both conditions are easy to verify, as they only require the knowledge of the iterates and the gradients.
\end{remark}
\begin{remark}
Condition 1 is satisfied when $\alpha$ is sufficiently small, since the left-hand side approaches $\|\nabla f(x_k)\|_{D_k^{-1}}^2$ while the right-hand side scales as $\mathcal{O}(1/\alpha)$ as $\alpha\to 0$.
Condition 2 is also satisfied when $\alpha$ is sufficiently small. This is because the left-hand side scales as $O(\alpha)$ while the right-hand side scales as $O(\alpha^{3/4})$ as $\alpha\to 0$.
Therefore, Lemma~\ref{assump:boundedness} holds, under a backtracking line search procedure to choose $\alpha$. This implies the boundedness of the iterates.
\end{remark}

\begin{remark}
    In general, for any $c\in [1,2)$, the conditions can be similaring derived, with a different choice of $\xi$. In practice, we simply choose the limiting case $c=2$.
\end{remark}

\subsection{Proof sketch of Theorem~\ref{thm:condition-boundedness}}
Below is a concise proof sketch for an intuitive understanding. The full proof is deferred after that.
\begin{proof}[Proof sketch]
Set $v_k := y_k - x_k$, $\beta:=1/\alpha$ and $\tilde\alpha := \alpha/(1+\alpha)$.
Eliminating $y_k$ from the scheme,
\begin{align}
  x_{k+1} - x_k &= \tilde\alpha\bigl(v_k - \beta\nabla\phi^*(\nabla f(x_k))\bigr),
  \label{eq:xdiff}\\
  v_{k+1} - v_k &= -\alpha(\alpha+2)\,v_{k+1}
    + \alpha\beta\,\nabla\phi^*(\nabla f(x_k))
    - \tfrac{\alpha(1+\alpha)}{\mu}\,\nabla\phi^*(\nabla f(x_{k+1})).
  \label{eq:vdiff}
\end{align}
Define the shifted Lyapunov function
\[
  \widetilde{E}_k
  := \underbrace{\bigl(f(x_k)-f(x^\star)\bigr)}_{\text{potential}}
   + \underbrace{\tfrac{\mu}{2}\|v_k\|_{D_k}^2}_{\text{kinetic}}
   - \tfrac{\theta}{\tilde{L}}\,D_{\phi^*}(0,\nabla f(x_k)),
\]
where $\tilde{L}=L(1+\alpha)$, $D_k := D(p(x_k))$, and $\theta\in(0,1]$ is a free parameter.
The subtracted Bregman term keeps $\widetilde{E}_k$ equivalent to $E_k$ up to constants
(using $f(x)-f(x^\star) \ge \frac{1}{L}D_{\phi^*}(0,\nabla f(x))$) while allowing a larger
step size $\alpha$.

\textbf{One-step decrease.}
Using dual $L$-smoothness, the three-point Bregman identity, and the kinetic energy
update~\eqref{eq:vdiff}, one computes
\[
  \widetilde{E}_{k+1} - \widetilde{E}_k
  \le
  -\Bigl[\mu\alpha(\alpha+2) - \tfrac{\delta^2}{2\eta}\Bigr]\|v_{k+1}\|_{D_{k+1}}^2
  - \tfrac{1-\theta}{\tilde{L}}D_{\phi^*}(0,\nabla f(x_k))
  + R_k,
\]
where $\delta := \alpha^2(\alpha+2)/(1+\alpha)$, $\eta>0$ is a free parameter from Young's
inequality, and $R_k$ collects the two remainder terms
\[
  R_k :=
  \tfrac{\mu}{L}\dual{D_{k+1}v_{k+1},\nabla\phi^*(\nabla f(x_k))}
  + \tfrac{\mu}{2}\|v_k\|_{D_{k+1}-D_k}^2.
\]
Setting $\alpha^2 = \mu/L$ and choosing $\eta = 2\alpha/[L(1+\alpha)^2]$,
$\theta = (1/2+\alpha)/(1+\alpha)$ annihilates the $\|\nabla f(x_{k+1})\|^2$
coefficient and makes the kinetic and potential damping terms explicitly negative.

\textbf{Reduction to two verifiable conditions.}
A final application of Young's inequality to $R_k$ shows that
$\widetilde{E}_{k+1} - \widetilde{E}_k \le 0$ follows from
\[
\begin{aligned}
  &\|\nabla f(x_k)\|_{D_k^{-1}D_{k+1}D_k^{-1}}^2
    \;\le\;
    \xi\,\tfrac{4(1-c/2)}{1+\alpha}\,D_{\phi^*}(0,\nabla f(x_k)),\\[6pt]
  &\|v_k\|_{D_{k+1}-D_k}^2
    \;\le\;
    \kappa(\alpha,c,\xi)\,\|v_{k+1}\|_{D_{k+1}}^2.
\end{aligned}
\]
where $c=1$ and $\kappa(\alpha,c,\xi)>0$ for sufficiently small $\alpha$.
Both conditions measure how much the metric $D_k$ changes between steps;
they hold whenever the iterates and gradients remain uniformly bounded,
which is the case when $\alpha$ is small enough.
Since $\widetilde{E}_k$ is equivalent to $E_k$ and $\widetilde{E}_{k+1}\le\widetilde{E}_k$,
the sequence $\{E_k\}$ is non-increasing, yielding the claimed boundedness
$\|x_k - x^\star\|_{D(p(x_k))} \le R$ and $\|y_k - x^\star\| \le R$.
\end{proof}

\subsection{Full proof of Theorem~\ref{thm:condition-boundedness}}
\begin{proof}
To prove this theorem, we reformulate the Acc-Sinkhorn scheme using a different set of variables $(x,v)$, and then analyze the difference of the a new Lyapunov function $\tilde{E}_k$. The two conditions are derived by ensuring $\tilde{E}_{k+1}-\tilde{E}_k$ is nonpositive, which implies the boundedness of the iterates. 
\paragraph{Reformulation.} Introduce
\[
v_k := y_k - x_k.
\]
From the first equation,
\[
x_{k+1} - x_k = \alpha\bigl[(y_k - x_k) - (x_{k+1} - x_k)\bigr] - \alpha\beta\nabla \phi^*(\nabla f(x_k)),
\]
hence
\[
(1+\alpha)(x_{k+1}-x_k) = \alpha v_k - \alpha\beta\nabla \phi^*(\nabla f(x_k)).
\]
Therefore
\[
x_{k+1} - x_k = \tilde{\alpha}\bigl(v_k - \beta\nabla \phi^*(\nabla f(x_k))\bigr),
\qquad \tilde{\alpha} := \frac{\alpha}{1+\alpha}.
\]
The second equation becomes
\[
y_{k+1} - y_k = -\alpha v_{k+1} - \frac{\alpha}{\mu}\nabla \phi^*(\nabla f(x_{k+1})).
\]
Hence
\[
v_{k+1} - v_k = (y_{k+1}-y_k) - (x_{k+1}-x_k)
= -\alpha v_{k+1} - \frac{\alpha}{\mu}\nabla \phi^*(\nabla f(x_{k+1}))
- \tilde{\alpha}\bigl(v_k - \beta\nabla \phi^*(\nabla f(x_k))\bigr).
\]
Using
\[
v_k = v_{k+1} - (v_{k+1} - v_k),
\]
we obtain
\[
(1-\tilde{\alpha})(v_{k+1}-v_k)
= -(\alpha+\tilde{\alpha})v_{k+1}
+ \tilde{\alpha}\beta\nabla \phi^*(\nabla f(x_k))
- \frac{\alpha}{\mu}\nabla \phi^*(\nabla f(x_{k+1})).
\]
Since
\[
\tilde{\alpha} = \frac{\alpha}{1+\alpha}, \qquad 1 - \tilde{\alpha} = \frac{1}{1+\alpha},
\]
it follows that
\begin{equation}
v_{k+1} - v_k
= -\alpha(\alpha+2)\,v_{k+1}
+ \alpha\beta\,\nabla \phi^*(\nabla f(x_k))
- \frac{\alpha(1+\alpha)}{\mu}\,\nabla \phi^*(\nabla f(x_{k+1})).
\end{equation}

\paragraph{Difference of potential energy.}
Using the $L$-dual smoothness and convexity of $f$ give
\[
f(x_{k+1}) - f(x_k)
\le \tilde{\alpha}\langle\nabla f(x_{k+1}), v_k\rangle
- \tilde{\alpha}\beta\langle\nabla f(x_{k+1}), \nabla \phi^*(\nabla f(x_k))\rangle
- \frac{1}{L}D_{\phi^*}(\nabla f(x_{k+1}), \nabla f(x_k)).
\]
Now assume
\[
\alpha\beta = \frac{1}{L}, \qquad
\tilde{\alpha}\beta = \frac{1}{\tilde{L}} \le \frac{1}{L}, \qquad
\tilde{L} = L(1+\alpha).
\]
Using the three -point identity
\[
D_{\phi^*}(a,b) + D_{\phi^*}(b,c) - D_{\phi^*}(a,c) = \langle a-b, \nabla \phi^*(c) - \nabla \phi^*(b)\rangle,
\]
we obtain
\[
-\tilde{\alpha}\beta\langle\nabla f(x_{k+1}),\nabla \phi^*(\nabla f(x_k))\rangle
- \frac{1}{2L}D_{\phi^*}(\nabla f(x_{k+1}), \nabla f(x_k))
\le -\frac{1}{\tilde{L}}D_{\phi^*}(\nabla f(x_{k+1}), 0) - \frac{1}{\tilde{L}}D_{\phi^*}(0, \nabla f(x_k)).
\]
Therefore,
\begin{equation}\label{eq:fdiff}
f(x_{k+1}) - f(x_k)
\le \tilde{\alpha}\langle\nabla f(x_{k+1}), v_k\rangle
- \frac{1}{\tilde{L}}D_{\phi^*}(\nabla f(x_{k+1}), 0) - \frac{1}{\tilde{L}}D_{\phi^*}(0, \nabla f(x_k)).
\end{equation}

\paragraph{Difference of kinetic energy.} Also,
\[
\frac{\mu}{2}\|v_{k+1}\|_{D_{k+1}}^2 - \frac{\mu}{2}\|v_k\|_{D_{k+1}}^2
= \mu\langle D_{k+1}v_{k+1}, v_{k+1}-v_k\rangle - \frac{\mu}{2}\|v_{k+1}-v_k\|_{D_{k+1}}^2.
\]
Substituting \eqref{eq:vdiff},
we obtain
\begin{align}
\frac{\mu}{2}\|v_{k+1}\|_{D_{k+1}}^2 - \frac{\mu}{2}\|v_k\|_{D_{k+1}}^2
&= -\mu\alpha(\alpha+2)\|v_{k+1}\|_{D_{k+1}}^2
+ \mu\alpha\beta\langle D_{k+1}v_{k+1},\nabla \phi^*(\nabla f(x_k))\rangle \notag\\
&\quad - \alpha(1+\alpha)\langle v_{k+1},\nabla f(x_{k+1})\rangle
- \frac{\mu}{2}\|v_{k+1}-v_k\|_{D_{k+1}}^2. \label{eq:vdiff2}
\end{align}

\paragraph{Cancellation.} Let
\[
\Delta v_k := v_{k+1} - v_k, \qquad v_k = v_{k+1} - \Delta v_k.
\]
Then
\begin{align*}
&\tilde{\alpha}\langle\nabla f(x_{k+1}), v_k\rangle
- \alpha(1+\alpha)\langle v_{k+1}, \nabla f(x_{k+1})\rangle\\
&\quad = -\tilde{\alpha}\langle\nabla f(x_{k+1}), \Delta v_k\rangle
- \delta\langle\nabla f(x_{k+1}), v_{k+1}\rangle,
\end{align*}
where
\[
\delta := \alpha(1+\alpha) - \tilde{\alpha} = \frac{\alpha^2(\alpha+2)}{1+\alpha}.
\]

\paragraph{Larger step size.} Introduce the shifted energy
\[
\widetilde{E}_k := E_k - \frac{\theta}{\tilde{L}}D_{\phi^*}(0,\nabla f(x_k)).
\]
Since
\[
f(x) - f^* \ge \frac{1}{L}D_{\phi^*}(0,\nabla f(x)),
\]
we have for all $\theta \in (0,1+\alpha)$,
\[
\widetilde{E}_k \ge c(\theta)\|\nabla f(x)\|^2.
\]
Moreover,
\begin{align*}
\widetilde{E}_{k+1} - \widetilde{E}_k
&\le -\mu\alpha(\alpha+2)\|v_{k+1}\|_{D_{k+1}}^2
+ \frac{\mu}{L}\langle D_{k+1}v_{k+1}, \nabla\phi^*(\nabla f(x_k))\rangle\\
&\quad - \delta\langle\nabla f(x_{k+1}), v_{k+1}\rangle
- \tilde{\alpha}\langle\nabla f(x_{k+1}), \Delta v_k\rangle\\
&\quad - \frac{\mu}{2}\|\Delta v_k\|_{D_{k+1}}^2
- \frac{1-\theta}{\tilde{L}}D_{\phi^*}(0,\nabla f(x_k))
- \frac{1}{\tilde{L}}D_{\phi^*}(\nabla f(x_{k+1}),0)
- \frac{\theta}{\tilde{L}}D_{\phi^*}(0,\nabla f(x_{k+1})).
\end{align*}
Using Young's inequality again,
\[
-\tilde{\alpha}\dual{\nabla f(x_{k+1}),\Delta v_k}\leq \frac{\tilde{\alpha}^2}{2\mu}\|\nabla f(x_{k+1})\|_{D_{k+1}^{-1}}^2 + \frac{\mu}{2}\|\Delta v_k\|_{D_{k+1}}^2,
\]
and
\[
-\delta\langle\nabla f(x_{k+1}), v_{k+1}\rangle
\le \frac{\eta}{2}\|\nabla f(x_{k+1})\|_{D_{k+1}^{-1}}^2 + \frac{\delta^2}{2\eta}\|v_{k+1}\|_{D_{k+1}}^2.
\]
Therefore
\begin{equation}\label{eq:Etilde}
    \begin{aligned}
\widetilde{E}_{k+1} - \widetilde{E}_k
\le{}& -\left[\mu\alpha(\alpha+2) - \frac{\delta^2}{2\eta}\right]\|v_{k+1}\|_{D_{k+1}}^2\\
&{}-\frac{1}{\tilde{L}}D_{\phi^*}(\nabla f(x_{k+1}),0)-\frac{\theta}{\tilde{L}}D_{\phi^*}(0,\nabla f(x_{k+1}))+\left(\frac{\tilde{\alpha}^2}{2\mu}+\frac{\eta}{2}\right)\|\nabla f(x_{k+1})\|_{D_{k+1}^{-1}}^2\\
&{}-\frac{1-\theta}{\tilde{L}}D_{\phi^*}(0,\nabla f(x_k))+\frac{\mu}{L}\langle D_{k+1}v_{k+1}, \nabla\phi^*(\nabla f(x_k))\rangle + \frac{\mu}{2}\|v_k\|_{D_{k+1}-D_k}^2.
\end{aligned}
\end{equation}

Now choose
\[
\alpha^2 =  c\frac{\mu}{L},~c\in [1,2], \qquad\text{equivalently}\qquad \mu = L\alpha^2/c.
\]
Then
\[
\frac{\tilde{\alpha}^2}{2\mu} = \frac{\alpha^2}{2\mu(1+\alpha)^2} = \frac{c}{2}\frac{1}{L(1+\alpha)^2}.
\]
Hence the second line of equation \ref{eq:Etilde} becomes less than or equal to
\[
-\min(1,\theta)\frac{1}{\tilde{L}}\left(D_{\phi^*}(0,\nabla f(x_{k+1}))+D_{\phi^*}(\nabla f(x_{k+1}),0)\right) + \left(\frac{c}{2}\frac{1}{L(1+\alpha)^2}+\frac{\eta}{2}\right)\|\nabla f(x_{k+1})\|_{D_{k+1}^{-1}}^2.
\]
To eliminate this term, we assume $\theta\leq 1$, and choose $\eta$ such that
\[
\frac{\eta}{2} \leq \frac{\theta}{\tilde{L}} - \frac{c}{2}\frac{1}{L(1+\alpha)^2} = \frac{\theta(1+\alpha)-c/2}{L(1+\alpha)^2}.
\]
which is positive for every $\theta\in (\frac{1}{1+\alpha},1]$. For simplicity, we choose $\eta=2\frac{\alpha}{L(1+\alpha)^2}$, and $\theta$ such that the equality holds above. Then $\theta=\frac{c/2+\alpha}{1+\alpha}$. When $c<2$, $\theta<1$. And the coefficient of $\|\nabla f(x_{k+1})\|_{D_{k+1}^{-1}}^2$ is $0$. 

Using
\[
\mu = L\alpha^2/c, \qquad
\eta = 2\frac{\alpha}{L(1+\alpha)^2}, \qquad
\delta := \alpha(1+\alpha) - \tilde{\alpha} = \frac{\alpha^2(\alpha+2)}{1+\alpha},
\]
we obtain the coefficient of $-\|v_{k+1}\|_{D_{k+1}}^2$ is
\begin{align*}
&\mu\alpha(\alpha+2) - \frac{\delta^2}{2\eta}\\
&= \frac{1}{c}L\alpha^3(\alpha+2)
- \frac{1}{2}\frac{L(1+\alpha)^2}{2\alpha}\cdot\frac{\alpha^4(\alpha+2)^2}{(1+\alpha)^2}\\
&= \frac{1}{c}L\alpha^3(\alpha+2)- \frac{1}{4}L\alpha^3(\alpha+2)^2=\frac{1}{4}L\alpha^3(\alpha+2)\left(\frac{4}{c}-(\alpha+2)\right).
\end{align*}
To make this coefficient positive, we need
\[c<\frac{4}{\alpha+2}.\]
Since $\alpha^2 = c\mu/L$, this is equivalent to
\[\alpha^2 < \frac{4}{\alpha+2}\frac{\mu}{L}.\]
This condition is satisfied for sufficiently small $\alpha$. In particular, if $c=1$, then $\alpha^2 = \mu/L < 4/(\alpha+2)$ is satisfied for all $\alpha\in (0,1)$.

Therefore, the change of the shifted energy is as follows:
\[
\begin{aligned}
\widetilde{E}_{k+1} - \widetilde{E}_k
\le{}& \frac{\mu}{L}\langle D_{k+1}v_{k+1}, \nabla \phi^*(\nabla f(x_k))\rangle + \frac{\mu}{2}\|v_k\|_{D_{k+1}-D_k}^2\\
&{}- \frac{1-c/2}{L(1+\alpha)^2}D_{\phi^*}(0,\nabla f(x_{k}))+ \left( \alpha + 2 - \frac{4}{c} \right)\frac{1}{4}L\alpha^3(\alpha+2)\|v_{k+1}\|_{D_{k+1}}^2.
\end{aligned}
\]

Use Young's inequality again to bound the cross term on the right-hand side:
\[
\frac{\mu}{L}\langle D_{k+1}v_{k+1}, \nabla \phi^*(\nabla f(x_k))\rangle
\le \frac{1}{4\tilde{L}\xi}\|\nabla \phi^*(\nabla f(x_k))\|_{D_{k+1}}^2 + \xi\frac{\mu^2(1+\alpha)}{L}\|v_{k+1}\|_{D_{k+1}}^2.
\]
The first term on the right-hand side can be written as
\[\frac{1}{4\tilde{L}\xi}\|\nabla \phi^*(\nabla f(x_k))\|_{D_{k+1}}^2 = \frac{1}{4\tilde{L}\xi}\|\nabla f(x_k)\|_{D_k^{-1} D_{k+1} D_k^{-1}}^2.\]
Wrapping up, we have
\[\begin{aligned}
\widetilde{E}_{k+1} - \widetilde{E}_k
\le{}& \frac{1}{4\tilde{L}\xi}\|\nabla f(x_k)\|_{D_k^{-1} D_{k+1} D_k^{-1}}^2 + \frac{\mu}{2}\|v_k\|_{D_{k+1}-D_k}^2\\
&{} + \frac{c/2-1}{L(1+\alpha)^2}D_{\phi^*}(0,\nabla f(x_{k}))+ \left(\left( \alpha + 2 - \frac{4}{c} \right)\frac{1}{4}L\alpha^3(\alpha+2)+\xi\frac{\mu^2(1+\alpha)}{L}\right)\|v_{k+1}\|_{D_{k+1}}^2.
\end{aligned}\]
Then, the boundeness is reduced to the following conditions on the sequences $\{x_k\}$, $\{y_k\}$:
\begin{enumerate}
    \item $$\frac{1}{4\tilde{L}\xi}\|\nabla f(x_k)\|_{D_k^{-1} D_{k+1} D_k^{-1}}^2 < \frac{1-c/2}{L(1+\alpha)^2}D_{\phi^*}(0,\nabla f(x_{k}))$$
    \item $$\frac{\mu}{2}\|v_k\|_{D_{k+1}-D_k}^2 < -\left(\left( \alpha + 2 - \frac{4}{c} \right)\frac{1}{4}L\alpha^3(\alpha+2)+\xi\frac{\mu^2(1+\alpha)}{L}\right)\|v_{k+1}\|_{D_{k+1}}^2.$$
\end{enumerate}
Simplifying the conditions gives
\begin{enumerate}
    \item $$\|\nabla f(x_k)\|_{D_k^{-1} D_{k+1} D_k^{-1}}^2 < \xi\frac{4(1-c/2)}{1+\alpha}D_{\phi^*}(0,\nabla f(x_{k}))$$
    \item $$\|v_k\|_{D_{k+1}-D_k}^2 < -\frac{\alpha}{2c}\left[(c^{2}+4\xi)\,\alpha^{2}
+4(c^{2}-c+\xi)\,\alpha+4c(c-2)\right]\|v_{k+1}\|_{D_{k+1}}^2.$$
\end{enumerate}
Note that both conditions are verifiable. They are satisfied when the step size $\alpha$ is sufficiently small, under appropriate choices of $\xi$ and $c$.

Lastly, choose $c=1$, and set $$\xi =\frac{4 - \alpha^2 - 2\alpha^{1/2}}{4\alpha(\alpha + 1)}.$$ In the small $\alpha$ regime, $\xi$ is positive, and it scales as $1/\alpha-1/(2\sqrt{\alpha})+O(\alpha)$ as $\alpha\to 0$.
\end{proof}

\section{Proofs for Section \ref{sec:accelerated}}\label{app:proofs-accelerated}

\begin{lemma*}[Restatement of Lemma 2]
Let $z(t)=(x(t),y(t))$ be a trajectory of $z'=\mathcal{G}(z)$. Define
$$
p(x(t)):=\nabla\phi^*(\nabla f(x(t))),\qquad \mathcal D(t):=D(p(x(t))),
$$
where $D(\cdot)$ is the diagonal map satisfying $D(s)\,s=\nabla\phi(s)$. Then, for all $t\ge 0$, the following identity holds:
$$
\begin{aligned}
\mathcal{E}'(x,y;\mathcal D)={}&\langle \nabla \mathcal{E}(x,y;\mathcal D),\mathcal{G}(z)\rangle\\
={}& -\mathcal{E}(x,y;\mathcal D) - D_f(x^\star,x) -\beta\|p(x)\|_{\mathcal D(t)}^2 -\frac{\mu}{2}\|x-y\|_{\mathcal D(t)}^2
\\
&\quad + \frac{\mu}{2}\|x-x^\star\|_{\mathcal D(t)}^2+\frac{\mu}{2}\|y-x^\star\|_{D'(t)}^2.
\end{aligned}
$$
\end{lemma*}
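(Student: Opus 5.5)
The plan is to differentiate $\mathcal E$ along the flow by the chain rule and then rewrite each term with two algebraic identities. The first is the defining relation of $D$, namely $D(z)z=\nabla\phi(z)$. Combined with \eqref{eq:dphiphistar}, it gives
$$
\nabla f(x)=\nabla\phi\bigl(\nabla\phi^*(\nabla f(x))\bigr)=\nabla\phi(p(x))=\mathcal D(t)\,p(x).
\qquad(\ast)
$$
The second is the polarization identity for the $\mathcal D(t)$-inner product.

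\textbf{Regularity.} First I check that $t\mapsto\mathcal E(x(t),y(t);\mathcal D(t))$ is differentiable. The scalar map $s\mapsto b_j(e^s-1)/s$ extends to an entire function, so $D(\cdot)$ is smooth. The map $p=\nabla\phi^*\circ\nabla f$ is $C^1$ wherever $\nabla f(x)>-b$, which always holds because $b+\nabla f(x)=c_P>0$. Hence $\mathcal D(t)$ is $C^1$ along the trajectory. Since $\mathcal D$ is diagonal and symmetric, the chain rule gives
$$
\frac{\dd}{\dd t}\mathcal E=\langle\nabla f(x),x'\rangle+\mu\langle \mathcal D(y-x^\star),y'\rangle+\frac{\mu}{2}\|y-x^\star\|_{\mathcal D'(t)}^2 .
$$
The last term is exactly the metric-variation term in the statement, so it is carried along unchanged.

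\textbf{Substituting the flow.} Insert $x'=y-x-\beta p$ and $y'=x-y-\frac1\mu p$. In the first term, $(\ast)$ gives $\langle\nabla f(x),\beta p\rangle=\beta\langle \mathcal D p,p\rangle=\beta\|p\|_{\mathcal D}^2$, so
$$
\langle\nabla f(x),x'\rangle=\langle\nabla f(x),y-x\rangle-\beta\|p\|_{\mathcal D}^2 .
$$
In the second term, $(\ast)$ gives $-\langle\mathcal D(y-x^\star),p\rangle=-\langle y-x^\star,\nabla f(x)\rangle$. The gradient pieces from the two terms then collapse to $-\langle\nabla f(x),x-x^\star\rangle$. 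By the definition of the Bregman divergence, this equals
$$
-\bigl(f(x)-f(x^\star)\bigr)-D_f(x^\star,x).
$$
The step that makes the Lyapunov structure work is that $(\ast)$ turns the preconditioned direction $p$ back into the true gradient exactly when it is paired in the $\mathcal D$-metric.

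\textbf{Polarization.} The remaining piece is $\mu\langle\mathcal D(y-x^\star),x-y\rangle$. Set $a=y-x^\star$ and $b=x-x^\star$, so that $x-y=b-a$, and use
$$
2\langle \mathcal D a,b-a\rangle=\|b\|_{\mathcal D}^2-\|a\|_{\mathcal D}^2-\|a-b\|_{\mathcal D}^2 .
$$
This produces $\frac\mu2\|x-x^\star\|_{\mathcal D}^2-\frac\mu2\|y-x^\star\|_{\mathcal D}^2-\frac\mu2\|x-y\|_{\mathcal D}^2$. The term $-\frac\mu2\|y-x^\star\|_{\mathcal D}^2$ combines with $-(f(x)-f(x^\star))$ to give $-\mathcal E(x,y;\mathcal D)$. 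Collecting all terms yields the claimed identity.

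\textbf{Main obstacle.} There is essentially no analytic difficulty here; the care is in the bookkeeping. The point to watch is that $(\ast)$ requires $D(p)$ to be evaluated at $p=p(x)$ itself, and this is exactly why the metric in $\mathcal E$ is $D(p(x))$ rather than a fixed matrix. The price of this choice is the extra positive term $\frac\mu2\|y-x^\star\|_{\mathcal D'}^2$, which is left unestimated in this lemma.
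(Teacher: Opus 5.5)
Your proposal is correct and follows the paper's proof essentially step for step. Both arguments use the chain rule with the $\mathcal D'$ term carried along, substitute the flow, use $\mathcal D(t)p(x)=\nabla f(x)$ to collapse the gradient terms to $-\langle\nabla f(x),x-x^\star\rangle$, apply polarization in the $\mathcal D$-inner product, and finish with the Bregman identity. Your added regularity check, that $\mathcal D(t)$ is $C^1$ because $b+\nabla f(x)=c_P>0$, is a small improvement the paper omits. Just avoid reusing the letter $b$ for $x-x^\star$, since it clashes with the column marginal.
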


\begin{proof}
Since $D=\mathcal D(t)$ varies with $t$, differentiating the Lyapunov function along the trajectory gives
$$
\mathcal{E}'(x,y;\mathcal D)
=\langle \nabla f(x),x'\rangle+\mu\langle y-x^\star,y'\rangle_{\mathcal D(t)}+\frac{\mu}{2}\|y-x^\star\|_{D'(t)}^2.
$$
Substituting $x'=y-x-\beta p(x)$ and $y'=x-y-\tfrac{1}{\mu}p(x)$, we obtain
$$
\begin{aligned}
{}& \langle \nabla f(x),y-x-\beta p(x)\rangle + \mu\langle y-x^\star,x-y-\tfrac{1}{\mu}p(x)\rangle_{\mathcal D(t)}
\\
={}& \langle \nabla f(x),y-x\rangle-\beta\langle \nabla f(x),p(x)\rangle+\mu\langle y-x^\star,x-y\rangle_{\mathcal D(t)}
-\langle y-x^\star, \mathcal D(t)p(x)\rangle.
\end{aligned}
$$
By definition of $D(\cdot)$ and $p(x)=\nabla\phi^*(\nabla f(x))$, we have
$$
\mathcal D(t)p(x(t))=D(p(x(t)))\,p(x(t)))=\nabla\phi(p(x(t)))=\nabla\phi\bigl(\nabla\phi^*(\nabla f(x(t)))\bigr)=\nabla f(x(t)),
$$
and hence
$$
\langle \nabla f(x),y-x\rangle-\beta\langle \nabla f(x),p(x)\rangle-\langle y-x^\star, \mathcal D(t)p(x)\rangle
=-\beta\|p(x)\|_{\mathcal D(t)}^2-\langle \nabla f(x),x-x^\star\rangle.
$$
Moreover, the polarization identity yields
$$
\mu\langle y-x^\star,x-y\rangle_{\mathcal D(t)}
=-\frac{\mu}{2}\|y-x^\star\|_{\mathcal D(t)}^2-\frac{\mu}{2}\|x-y\|_{\mathcal D(t)}^2+\frac{\mu}{2}\|x-x^\star\|_{\mathcal D(t)}^2.
$$
Using
$$
\langle \nabla f(x),x-x^\star\rangle = f(x)-f(x^\star) + D_f(x^\star,x),
$$
and recalling $\mathcal{E}(x,y;\mathcal D)=f(x)-f(x^\star)+\frac{\mu}{2}\|y-x^\star\|_{\mathcal D(t)}^2$, we conclude the claim.
\end{proof}

\subsection{Discretization and convergence analysis}

The continuous-time dynamics show that the Lyapunov function $\mathcal{E}$ decays exponentially, up to a positive perturbation term $\frac{\mu}{2}\|x-x^\star\|_{D}^2$ and a negative term $\beta\|p(x)\|_{D}^2$. To control the positive perturbation term, we impose Assumption \ref{assump:boundedness}.



\begin{theorem*}[Restatement of Theorem \ref{thm:single-step}]
Let $z_k=(x_k,y_k)$ be the iterates generated by Algorithm \ref{alg:A2MD} with $\alpha\beta=1$. Assume that Assumption \ref{assump:boundedness} holds. Then
$$
\begin{aligned}
\mathcal{E}(z_{k+1};D_{k+1}) - \mathcal{E}(z_k;D_{k+1})
\le{}& -\alpha \mathcal{E}(z_{k+1};D_{k+1}) + \frac{\alpha\mu}{2}R^2 -  D_{\phi^*}(0,\nabla f(x_{k}))
\\
&\quad + \frac{\alpha^2}{2\mu}\|\nabla \phi^*(\nabla f(x_{k+1}))\|_{D_{k+1}}^2 -  D_{\phi^*}(\nabla f(x_{k+1}),0).
\end{aligned}
$$
\end{theorem*}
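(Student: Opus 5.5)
We would prove the one-step inequality by mimicking the continuous-time identity of Lemma 2 at the discrete level, freezing the metric at $D_{k+1}:=D(p_{k+1})$. This choice makes the key identity $D_{k+1}p_{k+1}=\nabla\phi(p_{k+1})=\nabla f(x_{k+1})$ available, where $p_{k+1}=\nabla\phi^*(\nabla f(x_{k+1}))$.

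We split the difference into a potential part and a kinetic part:
\[
\mathcal E(z_{k+1};D_{k+1})-\mathcal E(z_k;D_{k+1})=\bigl(f(x_{k+1})-f(x_k)\bigr)+\tfrac{\mu}{2}\bigl(\|y_{k+1}-x^\star\|_{D_{k+1}}^2-\|y_k-x^\star\|_{D_{k+1}}^2\bigr).
\]

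For the potential part we argue as in Lemma~\ref{lem:sinkhorn-descent}. We expand at $x_{k+1}$:
\[
f(x_{k+1})-f(x_k)=\langle\nabla f(x_{k+1}),x_{k+1}-x_k\rangle-D_{f^*}\bigl(\nabla f(x_{k+1}),\nabla f(x_k)\bigr).
\]
We then substitute $x_{k+1}-x_k=\alpha(y_k-x_{k+1})-p_k$, using $\alpha\beta=1$. The term $-\langle\nabla f(x_{k+1}),p_k\rangle$ is handled by the three-point identity together with dual relative smoothness $D_{\phi^*}\le D_{f^*}$ ($L=1$). This produces $-D_{\phi^*}(0,\nabla f(x_k))-D_{\phi^*}(\nabla f(x_{k+1}),0)$, exactly as in \eqref{eq:sinkhorn-descent}. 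What remains is
\[
\alpha\langle\nabla f(x_{k+1}),y_k-x_{k+1}\rangle .
\]
We rewrite it as $\alpha\langle\nabla f(x_{k+1}),y_{k+1}-x_{k+1}\rangle+\alpha\langle\nabla f(x_{k+1}),y_k-y_{k+1}\rangle$. The second piece will be paired with the kinetic part.

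For the kinetic part we use the polarization identity in the $D_{k+1}$-norm:
\[
\tfrac{\mu}{2}\|y_{k+1}-x^\star\|^2-\tfrac{\mu}{2}\|y_k-x^\star\|^2=\mu\langle y_{k+1}-x^\star,y_{k+1}-y_k\rangle_{D_{k+1}}-\tfrac{\mu}{2}\|y_{k+1}-y_k\|_{D_{k+1}}^2 .
\]
We insert the implicit update $y_{k+1}-y_k=\alpha(x_{k+1}-y_{k+1})-\frac{\alpha}{\mu}p_{k+1}$. Using $D_{k+1}p_{k+1}=\nabla f(x_{k+1})$, the gradient term becomes $-\alpha\langle\nabla f(x_{k+1}),y_{k+1}-x^\star\rangle$. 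This cancels the $y_{k+1}$-part of the potential cross term and leaves
\[
-\alpha\langle\nabla f(x_{k+1}),x_{k+1}-x^\star\rangle\le-\alpha\bigl(f(x_{k+1})-f(x^\star)\bigr)
\]
by convexity. The metric term $\mu\alpha\langle y_{k+1}-x^\star,x_{k+1}-y_{k+1}\rangle_{D_{k+1}}$ is expanded by polarization into
\[
-\tfrac{\alpha\mu}{2}\|y_{k+1}-x^\star\|^2-\tfrac{\alpha\mu}{2}\|x_{k+1}-y_{k+1}\|^2+\tfrac{\alpha\mu}{2}\|x_{k+1}-x^\star\|^2_{D_{k+1}} .
\]
Together with $-\alpha(f(x_{k+1})-f^\star)$, this yields $-\alpha\mathcal E(z_{k+1};D_{k+1})$. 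Lemma~\ref{assump:boundedness} bounds the positive perturbation $\frac{\alpha\mu}{2}\|x_{k+1}-x^\star\|_{D_{k+1}}^2$ by $\frac{\alpha\mu}{2}R^2$; the norm is exactly $D(p(x_{k+1}))$, matching the $x$-part of \eqref{eq:boundedness}.

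Finally, the leftover cross term $\alpha\langle\nabla f(x_{k+1}),y_k-y_{k+1}\rangle=\alpha\langle p_{k+1},y_k-y_{k+1}\rangle_{D_{k+1}}$ is absorbed by Young's inequality into the negative $-\frac{\mu}{2}\|y_{k+1}-y_k\|_{D_{k+1}}^2$. This costs exactly $\frac{\alpha^2}{2\mu}\|p_{k+1}\|_{D_{k+1}}^2$, which is the remaining term in the statement. The $-\frac{\alpha\mu}{2}\|x_{k+1}-y_{k+1}\|^2$ term is simply dropped.

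We expect the main obstacle to be the bookkeeping: splitting $y_k-x_{k+1}$ so that the gradient–$y$ cross terms cancel exactly. This requires using the implicit (backward) form in the $y$-update and the specific metric $D_{k+1}$, so that the identity $D_{k+1}p_{k+1}=\nabla f(x_{k+1})$ converts the mirror step into a gradient. Any other choice of metric would leave an uncontrolled mismatch term.
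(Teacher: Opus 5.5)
Your proposal is correct and is essentially the paper's argument. Both expand $\mathcal E$ at $z_{k+1}$ with the metric frozen at $D_{k+1}$ and use $D_{k+1}p_{k+1}=\nabla f(x_{k+1})$. Both use polarization, convexity and the $x$-part of the boundedness lemma to obtain $-\alpha\mathcal E(z_{k+1};D_{k+1})+\frac{\alpha\mu}{2}R^2$. Both apply Young's inequality to $\alpha\langle\nabla f(x_{k+1}),y_k-y_{k+1}\rangle$ and handle the $p_k$ term with the three-point identity, dual relative smoothness and $\alpha\beta=1$.

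The difference is only packaging. The paper compares the scheme with implicit Euler and quotes the continuous-time bound at $z_{k+1}$, whereas you apply the three-point identity directly to $-\langle\nabla f(x_{k+1}),p_k\rangle$. This avoids the $\pm\|p_{k+1}\|_{D_{k+1}}^2$ terms that the paper's write-up drops on both sides.
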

\begin{proof}
First, consider the implicit Euler scheme
\begin{equation}
\label{eq:implicit-euler}
\frac{z_{k+1}-z_k}{\alpha}=\mathcal{G}(z_{k+1}).
\end{equation}
Expanding the Lyapunov difference at $z=z_{k+1}$ yields
$$
\begin{aligned}
\mathcal{E}(z_{k+1};D_{k+1}) - \mathcal{E}(z_k;D_{k+1})
={}& \langle \nabla \mathcal{E}(z_{k+1};D_{k+1}), z_{k+1}-z_k\rangle - D_{\mathcal{E}}(z_{k},z_{k+1};D_{k+1})
\\ 
={}& \alpha \langle \nabla \mathcal{E}(z_{k+1};D_{k+1}), \mathcal{G}(z_{k+1})\rangle - D_{f}(x_{k},x_{k+1}) - \frac{\mu}{2}\|y_{k+1}-y_k\|_{D_{k+1}}^2.
\end{aligned}
$$
Comparing \eqref{eq:implicit-euler} with the actual scheme \eqref{eq:scheme}, the $x$-update differs by two terms. This produces additional error terms in the Lyapunov difference:
$$
\begin{aligned}
\mathcal{E}(z_{k+1};D_{k+1}) - \mathcal{E}(z_k;D_{k+1})
\leq{}& \alpha \langle \nabla \mathcal{E}(z_{k+1};D_{k+1}), \mathcal{G}(z_{k+1})\rangle - D_{f}(x_{k},x_{k+1}) - \frac{\mu}{2}\|y_{k+1}-y_k\|_{D_{k+1}}^2
\\
&\quad + \alpha\langle \nabla f(x_{k+1}), y_k-y_{k+1}\rangle
+ \alpha\beta\langle \nabla f(x_{k+1}), \nabla \phi^*(\nabla f(x_{k+1})) - \nabla \phi^*(\nabla f(x_{k}))\rangle.
\end{aligned}
$$
By the continuous-time analysis,
$$
\langle \nabla \mathcal{E}(z_{k+1};D_{k+1}), \mathcal{G}(z_{k+1})\rangle \leq - \mathcal{E}(z_{k+1};D_{k+1}) +\frac{\mu}{2}R^2-\beta\|\nabla\phi^*(\nabla f(x_{k+1}))\|_{D_{k+1}}^2.
$$
For the first cross term, the Cauchy--Schwarz and Young inequalities give
$$
\begin{aligned}
\alpha\langle \nabla f(x_{k+1}), y_k-y_{k+1}\rangle
\leq{}& \frac{\alpha}{\sqrt{\mu}}\|\nabla f(x_{k+1})\|_{D_{k+1}^{-1}} \cdot \sqrt{\mu} \|y_k-y_{k+1}\|_{D_{k+1}}
\\
\leq{}& \frac{\alpha^2}{2\mu}\|\nabla f(x_{k+1})\|_{D_{k+1}^{-1}}^2 + \frac{\mu}{2}\|y_k-y_{k+1}\|_{D_{k+1}}^2.
\end{aligned}
$$
Since $D_{k+1}\nabla\phi^*(\nabla f(x_{k+1}))=\nabla\phi(\nabla\phi^*(\nabla f(x_{k+1})))=\nabla f(x_{k+1})$, we have
$$
\|\nabla f(x_{k+1})\|_{D_{k+1}^{-1}}^2 = \|\nabla \phi^*(\nabla f(x_{k+1}))\|_{D_{k+1}}^2.
$$
For the second cross term, the three-point identity for the Bregman divergence yields
$$
\begin{aligned}
&\langle \nabla f(x_{k+1}), \nabla \phi^*(\nabla f(x_{k+1})) - \nabla \phi^*(\nabla f(x_{k}))\rangle
\\
={}& D_{\phi^*}(\nabla f(x_{k+1}),\nabla f(x_{k})) - D_{\phi^*}(\nabla f(x_{k+1}),0) - D_{\phi^*}(0,\nabla f(x_{k})).
\end{aligned}
$$
Using $\phi^*(0)=0$ and the definition of the Bregman divergence,
$$
D_{\phi^*}(\nabla f(x_{k+1}),0)=\phi^*(\nabla f(x_{k+1}))-\langle \nabla f(x_{k+1}),0\rangle-\phi^*(0)=\phi^*(\nabla f(x_{k+1})).
$$
Combining the above bounds gives
$$
\begin{aligned}
\mathcal{E}(z_{k+1};D_{k+1}) - \mathcal{E}(z_k;D_{k+1})
\leq{}& -\alpha \mathcal{E}(z_{k+1};D_{k+1}) + \frac{\alpha\mu}{2}R^2  - \alpha\beta D_{\phi^*}(0,\nabla f(x_{k}))
\\
&\quad + \frac{\alpha^2}{2\mu}\|\nabla \phi^*(\nabla f(x_{k+1}))\|_{D_{k+1}}^2 - \alpha\beta D_{\phi^*}(\nabla f(x_{k+1}),0)
\\
&\quad +\alpha\beta D_{\phi^*}(\nabla f(x_{k+1}),\nabla f(x_{k})) - D_{f}(x_{k},x_{k+1}).
\end{aligned}
$$
By dual relative smoothness, $ D_{\phi^*}(\nabla f(x_{k+1}),\nabla f(x_{k})) \leq D_f(x_{k},x_{k+1})$. Therefore, it suffices to choose $\alpha$ and $\beta$ such that $\alpha\beta=1$.
\end{proof}

We next bound the change of the Lyapunov function induced by the change of the metric $D$.

\begin{lemma*}[Restatement of Lemma~\ref{lem:chenge-D}]
Assume that Assumption~\ref{assump:boundedness} holds. Then there exists a constant $C>0$ such that for any $k\ge 1$,
$$
\mathcal{E}(z_k;D_{k+1})-\mathcal{E}(z_k;D_k)
=\frac{\mu}{2}\|y_k-x^\star\|_{D_{k+1}-D_k}^2
\le C\,\frac{\mu}{2}R^2.
$$
\end{lemma*}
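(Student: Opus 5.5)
The identity is immediate from the definition of the Lyapunov function. Only the kinetic term $\frac{\mu}{2}\|y-x^\star\|_{D}^2$ depends on the metric, and $f(x_k)-f(x^\star)$ is unchanged. Hence
$$
\mathcal{E}(z_k;D_{k+1})-\mathcal{E}(z_k;D_k)=\frac{\mu}{2}\bigl(\|y_k-x^\star\|_{D_{k+1}}^2-\|y_k-x^\star\|_{D_k}^2\bigr)=\frac{\mu}{2}(y_k-x^\star)^\top(D_{k+1}-D_k)(y_k-x^\star),
$$
which is the claimed expression, read as a (possibly indefinite) quadratic form. The substance is the inequality.

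Since $D_{k+1}-D_k$ is diagonal, I would bound
$$
\|y_k-x^\star\|_{D_{k+1}-D_k}^2\le \max_j |(D_{k+1})_{jj}-(D_k)_{jj}|\,\|y_k-x^\star\|_2^2\le \|D_{k+1}-D_k\|_{\infty}\,R^2,
$$
using the $y$-part of Lemma~\ref{assump:boundedness}. It then suffices to show that $\sup_k \|D_{k+1}\|_\infty+\|D_k\|_\infty$ is finite, and to set $C$ to that bound.

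For the diagonal entries, I would use the representation from the appendix on the $x$-part:
$$
(D_k)_{jj}=b_j h\bigl((p_k)_j\bigr),\qquad h(t)=\frac{e^t-1}{t},\qquad (p_k)_j=\log\bigl(1+(\nabla f(x_k))_j/b_j\bigr).
$$
Since $\nabla f(x_k)=c_P-b$ with $c_P\ge 0$, we have $1+(\nabla f(x_k))_j/b_j=(c_P)_j/b_j$, so $(D_k)_{jj}=\bigl((c_P)_j-b_j\bigr)/\log\bigl((c_P)_j/b_j\bigr)$, the logarithmic mean of $(c_P)_j$ and $b_j$. The logarithmic mean is bounded by the maximum of its arguments. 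Since $P(u(x_k),x_k)$ has row sums $a$ and total mass $1$, every column sum satisfies $(c_P)_j\le 1$. Hence $0<(D_k)_{jj}\le \max((c_P)_j,b_j)\le 1$ for every $k$, with no assumption on $x_k$ needed.

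This gives $\|D_{k+1}-D_k\|_\infty\le 1$, so the lemma holds with $C=1$. A sharper constant would come from monotonicity of $h$ and continuity of $\nabla f$ on the bounded set provided by Lemma~\ref{assump:boundedness}. That refinement is not needed, however, because the statement only asks for an $O(\mu R^2)$ bound.

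The main obstacle I anticipate is the mismatch of norms. Lemma~\ref{assump:boundedness} bounds $y_k$ only in the Euclidean norm, so I must pass through $\lambda_{\max}$ of a diagonal matrix rather than any weighted-norm comparison. The uniform logarithmic-mean bound above is exactly what resolves this.
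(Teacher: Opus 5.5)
Your proof is correct, and it differs from the paper's at the one substantive step, the uniform bound on $\|D_{k+1}-D_k\|$. The paper writes $D_k=\mathrm{diag}\bigl(b.*h(c_{P_k}./b-\mathbf 1)\bigr)$ with $h(g)=g/\log(1+g)$. It then asserts that the boundedness lemma confines $c_{P_k}./b-\mathbf 1$ to a fixed compact interval, and uses a Lipschitz constant of $h$ there to get $\|D_{k+1}-D_k\|\le 2L(M-m)$.

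You instead recognize $(D_k)_{jj}=\bigl((c_{P_k})_j-b_j\bigr)/\log\bigl((c_{P_k})_j/b_j\bigr)$ as the logarithmic mean of $(c_{P_k})_j$ and $b_j$. It is therefore at most $\max\bigl((c_{P_k})_j,b_j\bigr)\le 1$, because the exact row solve $r_P=a$ forces total mass one.

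Your route is more elementary and more robust. It gives the explicit constant $C=1$, holds at every point regardless of the iterates, and uses only the Euclidean $y$-part of the boundedness lemma. By contrast, the paper's compactness step is not actually derived from that lemma. As literally written, $c_{P_k}./b-\mathbf 1\in[m,M]$ with $m>0$ is impossible, since $c_{P_k}$ and $b$ both sum to one. Also, $h$ would have to be controlled on $(-1,\infty)$, not just for $g>0$.

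What the paper's Lipschitz route could buy in principle is a bound proportional to the consecutive marginal change $\|c_{P_{k+1}}-c_{P_k}\|_\infty$. That quantity shrinks as the iteration converges, so it is the natural starting point for showing the metric-variation term is genuinely small rather than merely $O(\mu R^2)$. The paper discards this by bounding it with a constant, so for the lemma as stated your argument is the cleaner one.
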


\begin{proof}
By the definition of $\mathcal{E}$,
$$
\mathcal{E}(z_k;D_{k+1})-\mathcal{E}(z_k;D_k)
=\frac{\mu}{2}\|y_k-x^\star\|_{D_{k+1}-D_k}^2.
$$
Since $D_k$ is diagonal, we have
$$
\|y_k-x^\star\|_{D_{k+1}-D_k}^2\le \|y_k-x^\star\|^2\,\|D_{k+1}-D_k\|.
$$
It remains to bound $\|D_{k+1}-D_k\|$. Recall that $D(s)\,s=\nabla\phi(s)$ and $\nabla\phi(s)=b.\!*\,\exp(s)-b$. For $s\in\mathbb{R}^m$,
$$
D(s)=\mathrm{diag}\!\left(b.\!*\frac{\exp(s)-\mathbf{1}}{s}\right),
$$
where the fraction is entrywise (with the continuous extension at $s_j=0$). Moreover, with $s=p(x)=\nabla\phi^*(\nabla f(x))$ and $\nabla\phi^*(\eta)=\log(\mathbf{1}+\eta./b)$, we have
$$
s=\log(c_P./b),
\qquad
\frac{\exp(s)-\mathbf{1}}{s}=\frac{c_P./b-\mathbf{1}}{\log(c_P./b)}.
$$
Define $h(g):=g./\log(g+\mathbf{1})$ for $g>0$ (componentwise). Then
$$
D_k=\mathrm{diag}\!\bigl(b.\!*\,h(c_{P_k}./b-\mathbf{1})\bigr).
$$
Under Assumption~\ref{assump:boundedness}, the \textit{marginal deviation} $c_{P_k}./b-1$ is componentwise uniformly bounded, i.e., there exists $M>m>0$ s.t. $c_{P_k}./b-1\in[m,M]$. Since $h$ is Lipschitz on any compact domain, there exists $L=L(m,M)>0$ such that
$$
\|D_{k+1}-D_k\|
\le L\,\max_j b_j\,\big\|c_{P_{k+1}}./b-c_{P_k}./b\big\|_\infty
\le 2L(M-m),
$$
where the last inequality uses $c_{P_k}./b\-\mathbf{1}\in[m,M]$ for all $k$. Therefore,
$$
\begin{aligned}
\mathcal{E}(z_k;D_{k+1})-\mathcal{E}(z_k;D_k)
&\le \frac{\mu}{2}\|y_k-x^\star\|^2\,\|D_{k+1}-D_k\|\\
&\le \frac{\mu}{2}R^2\cdot 2L(M-m).
\end{aligned}
$$
Setting $C:=2L(M-m)$ gives the claim.
\end{proof}


Combining the above results, we conclude that the Lyapunov function decays geometrically up to a bounded perturbation term of order $\mu R^2$.

\begin{theorem*}[Restatement of Theorem~\ref{coro:perturbed-decay}] 
Let $z_k=(x_k,y_k)$ be the iterates generated by Algorithm~\ref{alg:A2MD}. Assume that Assumption~\ref{assump:boundedness} holds. Then there exists $C>0$ such that
$$
\mathcal{E}(x_{k+1},y_{k+1};D_{k+1}) \le \left(\frac{1}{1+\alpha}\right)^{k+1}\mathcal{E}(x_k,y_k;D_k) + C\mu R^2.
$$
\end{theorem*}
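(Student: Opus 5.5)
The plan is to combine the one-step estimate of Theorem~\ref{thm:single-step} with the metric-change bound of Lemma~\ref{lem:chenge-D}, and then unroll the resulting perturbed contraction.

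\textbf{Step 1: absorb the gradient terms in Theorem~\ref{thm:single-step}.} The term to remove is
\[
\frac{\alpha^2}{2\mu}\|\nabla\phi^*(\nabla f(x_{k+1}))\|_{\mathcal D_{k+1}}^2
\]
on its right-hand side. Write $q=\nabla f(x_{k+1})$. Since $\mathcal D_{k+1}p_{k+1}=q$, this norm equals $\langle q,p_{k+1}\rangle$ with $p_{k+1}=\nabla\phi^*(q)$. By the three-point identity with $\phi^*(0)=0$ and $\nabla\phi^*(0)=0$,
\[
\langle q,\nabla\phi^*(q)\rangle=D_{\phi^*}(q,0)+D_{\phi^*}(0,q).
\]
Since $\alpha^2=2\mu$, the coefficient is $\alpha^2/(2\mu)=1$. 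So the positive term equals $D_{\phi^*}(q,0)+D_{\phi^*}(0,q)$. The $-D_{\phi^*}(q,0)$ already present cancels the first piece. This leaves a residual $+D_{\phi^*}(0,\nabla f(x_{k+1}))$, alongside $-D_{\phi^*}(0,\nabla f(x_k))$ at the previous index.

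I would handle the residual in one of two ways:
\begin{itemize}
\item \emph{Conservatively}, take $\alpha^2<2\mu$ (the small-$\alpha$ regime allowed by Lemma~\ref{assump:boundedness}). Then the coefficient is strictly below $1$ and the leftover is $(\frac{\alpha^2}{2\mu}-1)D_{\phi^*}(0,q)\le 0$.
\item \emph{Alternatively}, at $\alpha^2=2\mu$, carry the residual as a telescoping term. Add $D_{\phi^*}(0,\nabla f(x_k))$ to the energy, or drop it using $D_{\phi^*}(0,\nabla f(x))\le f(x)-f(x^\star)\le\mathcal E$ with $L=1$. It is then absorbed into the constant $C$ by enlarging $R$-dependent bounds.
\end{itemize}
Either way, I drop the remaining nonpositive Bregman terms.

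\textbf{Step 2: the perturbed one-step contraction.} Step 1 gives
\[
(1+\alpha)\mathcal E(z_{k+1};\mathcal D_{k+1})\le \mathcal E(z_k;\mathcal D_{k+1})+\tfrac{\alpha\mu}{2}R^2 .
\]
Lemma~\ref{lem:chenge-D} gives
\[
\mathcal E(z_k;\mathcal D_{k+1})\le \mathcal E(z_k;\mathcal D_k)+C\tfrac{\mu}{2}R^2 .
\]
Combining them,
\[
\mathcal E_{k+1}\le \tfrac{1}{1+\alpha}\mathcal E_k+\tfrac{1}{1+\alpha}\bigl(\tfrac{\alpha}{2}+\tfrac{C}{2}\bigr)\mu R^2 .
\]
This already yields the stated single-step form, with $\mathcal E_k$ on the right and a constant times $\mu R^2$.

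\textbf{Step 3: unroll.} For the iterated version, unrolling and summing the geometric series gives the forcing term
\[
\tfrac{1}{\alpha}\bigl(\tfrac{\alpha+C}{2}\bigr)\mu R^2 .
\]
This is where I expect the main obstacle. The metric-change perturbation from Lemma~\ref{lem:chenge-D} is $O(\mu R^2)$ per step, not $O(\alpha\mu R^2)$. Summing it picks up a factor $1/\alpha\sim\mu^{-1/2}$ and degrades the bound to $O(\sqrt\mu R^2)$.

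To avoid this, I would first try to sharpen Lemma~\ref{lem:chenge-D}, showing $\|\mathcal D_{k+1}-\mathcal D_k\|=O(\alpha)$. The route is:
\begin{itemize}
\item $h$ is Lipschitz on the bounded range of $c_P./b$.
\item $c_P$ is a smooth function of $x$ with bounded derivative on the bounded set of Lemma~\ref{assump:boundedness}.
\item $\|x_{k+1}-x_k\|=\tilde\alpha\|y_k-x_k-\beta p_k\|$. Here $\tilde\alpha\|y_k-x_k\|=O(\alpha)$, while $\tilde\alpha\beta\|p_k\|\le\|p_k\|$ is controlled since $p_k\to0$ along the contraction.
\end{itemize}
With an $O(\alpha\mu R^2)$ per-step perturbation, the geometric sum gives $C\mu R^2$ as claimed. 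If this sharpening fails, I would state the result for the one-step form in Step 2, which is all the homotopy argument of Theorem~\ref{thm:homotopy} needs on each stage where $\mathcal E\lesssim\mu$.
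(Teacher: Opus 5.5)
\textbf{Gap in Steps 1--2: the residual cannot be removed within a single step.} The estimate of Lemma~\ref{thm:single-step} contains $-D_{\phi^*}(\nabla f(x_{k+1}),0)$ at the new index but \emph{no} $-D_{\phi^*}(0,\nabla f(x_{k+1}))$. Take $\lambda=\alpha^2/(2\mu)\le 1$ and $q=\nabla f(x_{k+1})$. The leftover is then $(\lambda-1)D_{\phi^*}(q,0)+\lambda D_{\phi^*}(0,q)$, not $(\lambda-1)D_{\phi^*}(0,q)$. Here $D_{\phi^*}(q,0)=\mathrm{KL}(c_P\|b)$ and $D_{\phi^*}(0,q)=\mathrm{KL}(b\|c_P)$. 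These agree to leading order near $q=0$, so the leftover is $\approx(2\lambda-1)D_{\phi^*}(0,q)>0$ whenever $\lambda>1/2$.

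Your option (b) fails too. The residual enters with coefficient $1$ against $(1+\alpha)\mathcal E_{k+1}$. Bounding it by $\mathcal E_{k+1}$ leaves only $\alpha\mathcal E_{k+1}$ on the left, which destroys the contraction. Absorbing an $O(1)$ KL term into $C\mu R^2$ instead forces $C\sim 1/\mu$. So the contraction of Step~2 is not established.

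The missing idea is cancellation \emph{across} steps. The only term that kills $+D_{\phi^*}(0,\nabla f(x_{k+1}))$ is the $-D_{\phi^*}(0,\nabla f(x_{k+1}))$ in the \emph{next} step's estimate. The paper multiplies by $(1+\alpha)^k$ and sums, so each intermediate term becomes $-\alpha(1+\alpha)^{i-1}D_{\phi^*}(0,\nabla f(x_i))\le 0$ at $\lambda=1$. The final term $(1+\alpha)^kD_{\phi^*}(0,\nabla f(x_{k+1}))$ still survives, and the paper's telescoped display leaves it standing as well. The clean fix is to \emph{subtract} it in the energy:
\[
\hat{\mathcal E}_k:=\mathcal E_k-\tfrac{\lambda}{1+\alpha}D_{\phi^*}(0,\nabla f(x_k)).
\]
Lemmas~\ref{thm:single-step} and~\ref{lem:chenge-D} then give exactly $(1+\alpha)\hat{\mathcal E}_{k+1}\le\hat{\mathcal E}_k+C'\mu R^2$. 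Dual smoothness with $L=1$ gives $D_{\phi^*}(0,\nabla f(x))\le f(x)-f(x^\star)$, hence $(1-\lambda)\mathcal E_k\le\hat{\mathcal E}_k\le\mathcal E_k$. So for $\alpha^2<2\mu$ you recover the bound up to a factor $1/(1-\lambda)$; this degenerates exactly at $\alpha=\sqrt{2\mu}$.

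\textbf{Step 3: your diagnosis is right, but the fix is circular.} The $1/\alpha$ accumulation is a real weakness of the paper's argument. The paper only notes that $\sum_{i=0}^k(1+\alpha)^{i-k-1}$ is finite, but this sum is as large as $1/\alpha$. Its constant therefore depends on $\alpha$, and the forcing is $O(\sqrt{\mu}R^2)$.

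Your sharpening does not escape this. With $\alpha\beta=1$, $x_{k+1}-x_k=\tfrac{\alpha}{1+\alpha}(y_k-x_k)-\tfrac{1}{1+\alpha}p_k$. The second part is $O(1)$ until $\mathcal E_k$ is already small, so ``$p_k\to 0$ along the contraction'' assumes the conclusion.

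A non-circular version goes as follows.
\begin{enumerate}
\item On the bounded set, $\|p_k\|^2\lesssim D_{\phi^*}(0,\nabla f(x_k))\le\mathcal E_k$.
\item Hence the metric-variation term is $\lesssim\mu R^2(\alpha R+\sqrt{\mathcal E_k})$.
\item Young's inequality, $K\mu R^2\sqrt{\mathcal E_k}\le\frac{\alpha}{2}\mathcal E_k+\frac{K^2\mu^2R^4}{2\alpha}$, closes the recursion.
\end{enumerate}
The rate becomes $1-\Theta(\alpha)$ rather than exactly $(1+\alpha)^{-1}$. The accumulated forcing is $O(\mu)$ with an $\alpha$-independent, $R$-dependent constant.

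\textbf{Two smaller points.}
\begin{itemize}
\item The factor $(1+\alpha)^{-(k+1)}$ marks the statement as the unrolled bound. The $\mathcal E(x_k,y_k;D_k)$ there is a typo for $\mathcal E(x_0,y_0;D_0)$, as in the main-text Theorem~\ref{coro:perturbed-decay}. A one-step contraction therefore does not ``already yield'' it.
\item Your fallback does not rescue Theorem~\ref{thm:homotopy}. Each stage runs $m_k\sim\mu_k^{-1/2}$ inner steps, and iterating the one-step form brings back the same $1/\alpha$ accumulation.
\end{itemize}
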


\begin{proof}
By Theorem \ref{thm:single-step} and Lemma \ref{lem:chenge-D}, there exists $C>0$ such that for all $k$,
$$
\begin{aligned}
(1+\alpha)\mathcal{E}(z_{k+1};D_{k+1})\leq {}&  \mathcal{E}(z_{k};D_{k}) +C\mu R^2 -  D_{\phi^*}(0,\nabla f(x_{k}))
\\
&\quad + \frac{\alpha^2}{2\mu}\|\nabla \phi^*(\nabla f(x_{k+1}))\|_{D_{k+1}}^2 -  D_{\phi^*}(\nabla f(x_{k+1}),0).
\end{aligned}
$$

Take a towering sum and then re-arrange the items, and we have
$$
\begin{aligned}
&(1+\alpha)^{k+1}\mathcal{E}(z_{k+1};D_{k+1})\\ \leq{}& \mathcal{E}(z_{0};D_{0})+\sum_{i=0}^k \left(1+\alpha\right)^{i} C\mu R^2 + \frac{\alpha^2}{2\mu} \|\nabla\phi^*(x_{k+1})\|_{D_{k+1}}^2 -D_{\phi^*}(\nabla f(x_{k+1}),0)- (1+\alpha)^k D_{\phi^*}(0,\nabla f(x_0))  \\&{}+ \sum_{i=1}^{k} (1+\alpha)^{i-1} \left( \frac{\alpha^2}{2\mu}\|\nabla \phi^*(\nabla f(x_{i}))\|_{D_{i}}^2 -  D_{\phi^*}(\nabla f(x_{i}),0) - (1+\alpha)D_{\phi^*}(0,\nabla f(x_{i}))\right).
\end{aligned}
$$
As for all $i$, $$\|\nabla \phi^*(\nabla f(x_{i}))\|_{D_{i}}^2 = D_{\phi^*}(\nabla f(x_{i}),0) + D_{\phi^*}(0,\nabla f(x_{i})),$$
choosing $\alpha=\sqrt{2\mu}$ makes $\alpha^2/(2\mu)=1$, so the intermediate terms are all non-positive. The conclusion follows from the fact that $\sum_{i=0}^k(1+\alpha)^i/(1+\alpha)^{k+1}$ is finite.
\end{proof}



Finally, we can analyze the convergence of Algorithm \ref{alg:A2MD-homotopy}. We adopt the notation $$\mathcal{E}(x,y,\mu;D):=f(x)-f(x^\star)+\frac{\mu}{2}\|y-x\|_D^2$$ to explicitly indicate the dependence of $\mathcal{E}$ on $\mu$.


\begin{theorem}[Restatement of Theorem~\ref{thm:homotopy}] 
Choose $(x_0,y_0)$ and $\mu_0$ such that
$$
\mathcal E(x_0,y_0;\mu_0,D)\le (R^2+1)\mu_0.
$$
Let $(x_k,y_k,\mu_k)$ be generated by Algorithm~\ref{alg:A2MD-homotopy}. Assume that Assumption~\ref{assump:boundedness} holds. Then
\begin{equation}
\mathcal E(x_k,y_k;\mu_k,D)\le (R^2+1)\mu_k,
\qquad\forall\,k\ge 0.
\end{equation}
Moreover, let
$
M_k:=\sum_{i=0}^k m_i
$
be the total number of inner iterations after the $k$th outer loop, and
$
C_*:=\frac{\sqrt{2}-1}{(\sqrt{2L_F}+\sqrt{2\mu_0})\ln\bigl(2(R^2+1)\bigr)}
$
be a constant. Then
\begin{equation}\label{eq:rate}
\mathcal E(x_k,y_k;\mu_k,D)\le \frac{R^2+1}{\bigl(C_*M_k+\mu_0^{-1/2}\bigr)^2}
\qquad\forall\,k\ge 0.
\end{equation}
In particular, it takes $M_k=O(\mu^{-1/2})$ iterations to reach accuracy
$
\mathcal E(x_k,y_k;\mu_k,D)=O(\mu).
$
\end{theorem}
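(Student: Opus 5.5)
The argument is an induction over the outer homotopy loop, using Theorem~\ref{coro:perturbed-decay} as the inner-loop contraction with a bias. The hypothesis for stage $k$ is $\mathcal E(x_k,y_k;\mu_k,D)\le (R^2+1)\mu_k$; the base case $k=0$ is the assumed initialization.

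\textbf{Inductive step.} Stage $k$ runs Algorithm~\ref{alg:A2MD} for $m_k$ steps with $\mu=\mu_k$ and $\alpha_k=\sqrt{2\mu_k}$. Theorem~\ref{coro:perturbed-decay} (with Lemma~\ref{assump:boundedness}) gives
\[
\mathcal E_{\mathrm{end}}\le (1+\alpha_k)^{-m_k}(R^2+1)\mu_k + C\mu_k R^2 .
\]
The constant $C$ in front of $\mu R^2$ must be normalized so this is compatible with the target bound. I would track the perturbation sum $\sum_i (1+\alpha)^{i-(k+1)} \alpha\mu R^2/2 \le \mu R^2/2$ explicitly, using the $\frac{\alpha\mu}{2}R^2$ term of Lemma~\ref{thm:single-step} plus the $\frac{\mu}{2}\|y_k-x^\star\|^2_{\mathcal D_{k+1}-\mathcal D_k}$ term of Lemma~\ref{lem:chenge-D}. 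The aim is an inner bound of the form $\mathcal E_{\mathrm{end}}\le (1+\alpha_k)^{-m_k}(R^2+1)\mu_k+\tfrac12\mu_k R^2$. The metric-variation term is not obviously $O(\alpha\mu)$ per step, so it is the delicate point: I would either absorb it into the stability condition or assume it is of size $\alpha\mu R^2$ per step.

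\textbf{Changing $\mu$.} Halving $\mu$ only decreases the quadratic part, so $\mathcal E(\cdot;\mu_{k+1})\le\mathcal E(\cdot;\mu_k)$. To get $\mathcal E_{\mathrm{end}}\le (R^2+1)\mu_k/2$, it suffices that $(1+\alpha_k)^{-m_k}(R^2+1)\le \tfrac12$, i.e.
\[
m_k\ge \frac{\ln(2(R^2+1))}{\ln(1+\alpha_k)} .
\]
Using $\ln(1+\alpha)\ge \alpha/(1+\alpha)$ with $\alpha_k\le\sqrt{2\mu_0}$, a sufficient condition is $m_k\ge \ln(2(R^2+1))\,(1+\sqrt{2\mu_0})/\sqrt{2\mu_k}$. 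This is where $L_F$ (which equals $1$) enters through $\sqrt{2L_F}+\sqrt{2\mu_0}$. The schedule $m_{k+1}=\lfloor\sqrt2 m_k\rfloor+1\ge\sqrt2 m_k$ matches $\mu_k^{-1/2}$ growing by $\sqrt2$, so if $m_0$ satisfies the inequality, all $m_k$ do. Hence (implicitly) $m_0$ must be chosen large enough, and I would state this.

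\textbf{Rate.} Summing the geometric series gives $M_k\le \sum_{i\le k}(\sqrt2)^i c_0\mu_0^{-1/2}$ up to additive $+1$'s, which is about $\frac{\sqrt2}{\sqrt2-1}c_0\mu_k^{-1/2}$. Inverting yields $\mu_k^{-1/2}\ge C_*M_k+\mu_0^{-1/2}$ with $C_*$ as stated, after adjusting the $+\mu_0^{-1/2}$ offset. Therefore $\mathcal E_k\le (R^2+1)\mu_k\le (R^2+1)/(C_*M_k+\mu_0^{-1/2})^2$. Setting $\mu_k\sim\tau$ gives $M_k=O(\tau^{-1/2})$.

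I expect the main obstacle to be the first step: keeping the bias constant $C$ from Theorem~\ref{coro:perturbed-decay}, including the metric-change contribution, at most $1/2$ so that the $(R^2+1)\mu$ invariant closes. The exact constant bookkeeping in inverting the sum to obtain $C_*$ is secondary.
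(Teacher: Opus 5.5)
Your outline is the same as the paper's proof. It uses a stage-wise perturbed contraction, chooses $m_k$ so that $(1+\alpha_k)^{-m_k}\le 1/(2(R^2+1))$, inducts on $\mathcal E_k\le (R^2+1)\mu_k$ via $\mu_{k+1}=\mu_k/2$, and sums the $m_i$ geometrically. However, the step you flag as delicate is a genuine gap, and it is a gap in the \emph{order} in $\mu$, not in the constant.

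Lemma~\ref{lem:chenge-D} bounds the metric-variation term only by $C\frac{\mu}{2}R^2$ per step, with $C$ independent of $\alpha$. Once $m_k$ is large enough that $(1+\alpha_k)^{-m_k}\le\rho<1$, the weights satisfy $\sum_{i=1}^{m_k}(1+\alpha_k)^{-i}\ge (1-\rho)/\alpha_k$. So the stage bias you can certify is of order $C\mu_k R^2/\alpha_k\asymp\sqrt{\mu_k}$. No choice of $m_k$ or of the invariant constant absorbs this into $K\mu_k$. The induction then only closes for $\mathcal E_k\lesssim\sqrt{\mu_k}\asymp 1/M_k$, which is Sinkhorn's unaccelerated rate.

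Your first remedy is not available: the stability conditions of Theorem~\ref{thm:condition-boundedness} only yield the boundedness that Lemma~\ref{lem:chenge-D} already uses. Nor can Theorem~\ref{coro:perturbed-decay} be quoted with a $\mu$-uniform $C$, as in your first display. Its proof bounds the bias by $C\mu R^2\sum_{i=0}^{k}(1+\alpha)^{i-k-1}\le C\mu R^2/\alpha$.

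The paper's own proof has the same hole. It posits $(1+\alpha)\mathcal E_{j+1}\le\mathcal E_j+\mu R^2$ per step and then asserts a stage bias of $\mu_{k+1}R^2$, whereas iterating that inequality gives up to $\mu_kR^2/\alpha_k$. So your second remedy is really a new hypothesis the theorem must carry, e.g.\ $\|\mathcal D_{j+1}-\mathcal D_j\|\le\kappa\,\alpha_k$ along stage $k$ with $\kappa$ independent of $\mu$. Then the stage bias is at most $(1+\kappa)\mu_kR^2/2$. The induction closes after enlarging $R^2+1$ to some $K\ge (1+\kappa)R^2/(1-2\rho)$, where $\rho$ is the stage contraction factor.

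Three smaller points also need repair.

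First, at exactly $\alpha^2=2\mu$ the gradient terms in Lemma~\ref{thm:single-step} reduce to $D_{\phi^*}(0,\nabla f(x_{j+1}))-D_{\phi^*}(0,\nabla f(x_j))$. So the clean per-step contraction you implicitly use is not what that lemma gives. After telescoping, the leftover $(1+\alpha)^k D_{\phi^*}(0,\nabla f(x_{k+1}))$ is only bounded by $(1+\alpha)^k(f(x_{k+1})-f(x^\star))$. Absorbing it leaves only $\alpha(1+\alpha)^k\mathcal E_{k+1}$ on the left, which costs another factor $1/\alpha$. What does contract cleanly is $\check{\mathcal E}:=\mathcal E-D_{\phi^*}(0,\nabla f(x))$, with $(1+\alpha)\check{\mathcal E}_{j+1}\le\check{\mathcal E}_j+B_j$, where $B_j$ is the per-step bias. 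This quantity is at most $\mathcal E$ and still decreases when $\mu$ is halved. By Lemma~\ref{lem:sinkhorn-descent} it bounds $f$ at the Sinkhorn step of $x_k$ rather than at $x_k$.

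Second, Algorithm~\ref{alg:A2MD-homotopy} passes $w=\alpha y$, not $y$, between stages, so halving $\mu$ multiplies the implied $y$ by $\sqrt2$. Your claim that the $\mu$-update can only decrease $\mathcal E$ therefore requires the homotopy to carry $y$, i.e.\ to rescale $w$ by $\alpha_{k+1}/\alpha_k$. Otherwise an extra $O(\mu_k)$ term depending on $\|x^\star\|$ must be absorbed into the invariant.

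Third, the inversion gives $\mu_{k+1}^{-1/2}\gtrsim C_*M_k+\mu_0^{-1/2}$, so the rate holds for the iterate produced after stage $k$. At $k=0$ the displayed bound is strictly stronger than the hypothesis $\mathcal E_0\le(R^2+1)\mu_0$ and does not follow, so the indexing must be shifted.
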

\begin{proof}[Proof of Theorem~\ref{thm:homotopy}]
The proof uses the one-step sufficient decay estimate established earlier for the Sinkhorn map, together with the discrete Lyapunov estimate for the accelerated scheme.

For a fixed value of $\mu$, the inner iteration satisfies a perturbed contraction of the form
\begin{equation}\label{eq:inner-contraction}
(1+\alpha)\mathcal E(x_{j+1},y_{j+1};\mu,D)\le \mathcal E(x_j,y_j;\mu,D)+\mu R^2,
\qquad \alpha=\sqrt{2\mu}.
\end{equation}
Here the negative term coming from the Sinkhorn step is controlled by the sufficient decay lemma, while Assumption~\ref{assump:boundedness} bounds the perturbation term generated by the weighted norm in the Lyapunov function.

Iterating \eqref{eq:inner-contraction} for $m_k$ inner steps gives
\begin{equation}\label{eq:mk-step}
\mathcal E_{k+1}\le (1+\alpha_k)^{-m_k}\mathcal E_k+\mu_{k+1}R^2,
\end{equation}
where $\mathcal E_k:=\mathcal E(x_k,y_k;\mu_k,D)$. The choice of $m_k$ in Algorithm~\ref{alg:A2MD-homotopy} ensures that
$$
(1+\alpha_k)^{-m_k}\le \frac{1}{2(R^2+1)}.
$$
Therefore,
$$
\mathcal E_{k+1}\le \frac{1}{2(R^2+1)}\mathcal E_k+\mu_{k+1}R^2.
$$
Assume inductively that
$$
\mathcal E_k\le (R^2+1)\mu_k.
$$
Since $\mu_{k+1}=\mu_k/2$, we obtain
$$
\mathcal E_{k+1}\le \frac{1}{2}\mu_k+\mu_{k+1}R^2=(R^2+1)\mu_{k+1},
$$
which proves \eqref{eq:Ek-ek}.

It remains to express $\mu_k$ in terms of the total number $M_k$ of inner steps. Since
$$
\mu_k=\mu_0\,2^{-k},
\qquad
\alpha_k=\sqrt{2\mu_k},
$$
and $m_k$ is chosen proportional to $\mu_k^{-1/2}$, summing the geometric progression yields
$$
M_k\asymp \sum_{i=0}^k \mu_i^{-1/2}
\asymp \mu_k^{-1/2}-\mu_0^{-1/2}.
$$
Equivalently,
$$
\mu_k\asymp \frac{1}{\bigl(C_*M_k+\mu_0^{-1/2}\bigr)^2},
$$
for the constant $C_*$ defined above. Substituting this relation into \eqref{eq:Ek-ek} gives \eqref{eq:rate}. The final complexity bound follows immediately.
\end{proof}

\end{document}